\documentclass[12pt]{amsart}

\usepackage{cancel}
\usepackage{latexsym, amssymb, amsmath,esint}
\usepackage{soul}
\usepackage{amsfonts, graphicx}
\usepackage{graphicx,color}

\usepackage{amsmath, amssymb, amsfonts, mathrsfs, mathtools}
\usepackage{latexsym, esint, graphicx, xcolor, color}
\usepackage{wasysym, stmaryrd}
\usepackage{hyperref}
\usepackage[alphabetic]{amsrefs}

\newcommand{\be}{\begin{equation}}
\newcommand{\ee}{\end{equation}}
\newcommand{\beq}{\begin{eqnarray}}
\newcommand{\eeq}{\end{eqnarray}}

\usepackage{wasysym,stmaryrd}
\newtheorem{thm}{Theorem}[section]

\newtheorem{lma}[thm]{Lemma}
\newtheorem{prop}[thm]{Proposition}

\newtheorem{defn}[thm]{Definition}
\theoremstyle{remark}
\newtheorem{rem}[thm]{Remark}
\numberwithin{equation}{section}

\newtheorem{claim}{Claim}[section]

\def\be{\begin{equation}}
\def\ee{\end{equation}}
\def\bee{\begin{equation*}}
\def\eee{\end{equation*}}

\def\lf{\left}
\def\ri{\right}

\mathtoolsset{showonlyrefs}

\def\Ric{\text{\rm Ric}}
\def\Rm{\text{\rm Rm}}

\def\wt{\widetilde}

\def\tr{\operatorname{tr}}

\def\e{\varepsilon}

\def\a{{\alpha}}
\def\b{{\beta}}

\newcommand{\loc}{\mathrm{loc}}
\newcommand{\ADM}{\mathrm{ADM}}
\DeclareMathOperator{\scal}{scal}

\begin{document}

\title[Spaces with lower bound on distributional scalar curvature]{Spaces with distributional scalar curvature bounded from below: Optimal regularity and positive mass}

\author{Man-Chun Lee}
\address[Man-Chun Lee]{Department of Mathematics, The Chinese University of Hong Kong, Shatin, Hong Kong, China}
\email{mclee@math.cuhk.edu.hk}

\author{Florian Litzinger}
\address[Florian Litzinger]{Otto-von-Guericke-Universität Magdeburg, Fakultät für Mathematik, Institut für Analysis und Numerik, Universitätsplatz 2, 39106 Magdeburg, Germany}
\email{florian.litzinger@ovgu.de}

\author{Miles Simon}
\address[Miles Simon]{Otto-von-Guericke-Universität Magdeburg, Fakultät für Ma{-}thematik, Institut für Analysis und Numerik, Universitätsplatz 2, 39106 Mag{-}deburg, Germany}

\email{miles.simon@ovgu.de}

\subjclass[2020]{Primary 53C21; Secondary 53E20, 35B65, 53C24, 83C99.}
\keywords{Positive mass theorem, Ricci flow, weak metrics, critical low regularity, distributional scalar curvature}

\date{June 22, 2026}

\begin{abstract}
In this work, we study the positive mass theorem under critical low regularity assumptions using Ricci flow smoothing. We show that asymptotically flat manifolds $(M^n,g)$ of regularity $L^\infty\cap W^{1,n}$ with non-negative distributional scalar curvature have non-negative ADM mass. Furthermore, when the ADM mass vanishes, the manifold is globally isometric to Euclidean space with respect to an integral distance introduced by De~Cecco–Palmieri. This extends the recent work of Hafemann to the critical regularity case. Our approach is based on showing that Riemannian metrics of regularity $L^\infty\cap W^{1,n}$, whose scalar curvature is bounded from below in the distributional sense, admit a Ricci flow smoothing whose scalar curvature is bounded from below by the same initial lower bound in the classical sense. In contrast, Cecchini–Frenck–Zeidler constructed examples of metrics which are in $L^\infty\cap W^{1,p}$ for all $2<p<n$, and whose distributional scalar curvature is  bounded from below, that cannot be approximated by smooth metrics with the same scalar curvature lower bound. In this sense, our result is optimal.
\end{abstract}

\maketitle

\section{Introduction}
\label{sec: introduction}

A fundamental problem in Riemannian geometry is to understand the geometric consequences of pointwise curvature conditions. Geometric rigidity plays a central role in distinguishing manifolds from space forms. One of the most basic questions in scalar curvature geometry asks to what extent Euclidean space is rigid under the assumption of non-negative scalar curvature. In this direction, the classical Geroch conjecture asserts that any Riemannian metric on a torus with non-negative scalar curvature must be flat. This conjecture was proved by Schoen–Yau \cites{SchoenYauTorus,SchoenYauTorusII} for dimensions $n\leq 7$ using minimal surface techniques, and by Gromov–Lawson \cite{GromovLawson1980} in all dimensions using spinor methods. In the non-compact setting, the positive mass theorem states that an asymptotically flat manifold with non-negative scalar curvature must have non-negative Arnowitt–Deser–Misner (ADM) mass as introduced in \cite{ADM}. Moreover, if the ADM mass of an end vanishes, then the manifold must be isometric to Euclidean space. This theorem was proved by Schoen–Yau \cites{SchoenYauPMT,SchoenNOTE} for $n\leq 7$, and by Witten \cite{Witten} in the spin case; see also the recent breakthroughs by Bi–Hao–He–Shi–Zhu \cite{PMT19} and Brendle–Wang \cite{BrendleWang}. Through Lohkamp’s compactification method \cite{Lohkamp1999}, the positive mass theorem is in fact equivalent to the non-existence of metrics of positive scalar curvature on connected sums of the form $\mathbb{T}^n \# N$, where $N$ is a closed manifold.

In recent years, there has been growing interest in understanding scalar curvature rigidity under weaker regularity assumptions. Metrics of low regularity arise naturally in stability questions, in the study of the Brown–York quasi-local mass, and in formulations of the positive mass theorem allowing compact singularities. The main objective of this work is to study the scalar curvature rigidity for metrics with minimal regularity assumptions. Motivated in part by the positive mass theorem with corners, much of the literature considers metrics that are smooth away from a compact singular set $\mathcal{S}$, whose size is small in terms of Hausdorff or Minkowski dimension. The case where $\mathcal{S}$ is a hypersurface was first studied by Miao \cite{Miao}, who assumed that the metric is Lipschitz across $\mathcal{S}$ and satisfies mean curvature inequalities. Later, McFeron–Székelyhidi \cite{McFeronSzekelyhidi2012} gave an alternative proof using Ricci flow, establishing rigidity as well. Since then, there has been substantial progress on the interplay between the size of $\mathcal{S}$ and the regularity of the metric across $\mathcal{S}$; see, for example,
\cites{Guim, CecchiniFrenckZeidler2024,ChuLee2025,DaiSunWang2025,DaiWangWangWei,HeShiYu,JiangShengZhang2022,JiangShengZhang2023,Kazaras,Lee2013,LeeTamTAMS,LiMantoulidis,
ShiTam2018} and the references therein. In most of these works, the metric is assumed to lie in $L^\infty$, an assumption closely related to a conjecture of Schoen (see \cites{LiMantoulidis,CecchiniFrenckZeidler2024}).

\begin{defn}\label{defn:Linfty-metric}
Let  $(M,h)$ be  a smooth Riemannian manifold. Then $g$ is said to be a \emph{(globally) $L^\infty$-metric with respect to $h$} if $g$ is a measurable section of $\mathrm{Sym}_2(T^*M)$ such that $$\Lambda^{-1}h\leq g\leq \Lambda g$$ almost everywhere on $M$ for some $\Lambda>1$. We further say that  $g\in W^{1,p}$ with respect to $h$ if $$||\tilde\nabla g||_{L^p(M,h)}<+\infty,$$ where $\tilde\nabla$ denotes the connection of $h$. 
\end{defn}

Throughout the paper, we shall always be working with smooth background Riemannian manifolds $(M^n,h)$ of bounded geometry.
\begin{defn}\label{defn:bdd-geom}
A complete Riemannian manifold $(M,h)$ is said to have \emph{bounded geometry} if there exists a sequence $\{a_k\}_{k=0}^\infty$ and $\iota>0$ such that 
\begin{equation}
  \mathrm{inj}(M,h)>\iota\quad\text{and}\quad   \sup_M |\nabla^{h,k}\mathrm{Rm}(h)|\leq a_k\quad\text{for all}\; k\geq 0.
\end{equation}
\end{defn}

When $M$ is compact, any smooth metric satisfies this condition. This assumption also implies that $(M,h)$ is of infinite volume if $M$ is non-compact. In this work, we focus on the more general setting in which the singular set may have full Hausdorff dimension. This framework was first considered by Lee–LeFloch \cite{LeeLeFloch2015}, who introduced a distributional notion of scalar curvature for metrics in $L^\infty\cap W^{1,2}_{\loc}$.

\begin{defn}[Definition 2.1 in \cite{LeeLeFloch2015}]\label{defn:R-lower-dist}
Let $M$ be a smooth manifold with background metric $h$. A Riemannian metric $g \in W^{1,2}_{\loc}\cap L^\infty$ is said to have \emph{$\scal(g)\geq \kappa$ for $\kappa\in \mathbb{R}$ in the distributional sense} if 
$$\langle \langle \scal(g),\varphi\rangle\rangle \geq \kappa \int_M \varphi \; d\mathrm{vol}_h$$
for all non-negative test function $\varphi\in C^\infty_{c}(M)$, where
\begin{equation*}
\begin{split}
 \langle\langle \scal(g),\varphi\rangle\rangle := \int_M \lf[-\left\langle V,\tilde  \nabla \left(\varphi \cdot \frac{d\mathrm{vol}_{g}}{d\mathrm{vol}_h}\right)\right\rangle_h+ F\varphi  \cdot \frac{d\mathrm{vol}_{g}}{d\mathrm{vol}_h}\,\ri]\; d\mathrm{vol}_h
\end{split}
\end{equation*}
with
\begin{equation}\label{eqn:F-V-dist}
\left\{
\begin{array}{ll}
\Psi^k_{ij}=\frac12 g^{kl} \left(  \tilde\nabla_i g_{kl}+\tilde\nabla_j g_{il}-\tilde\nabla_l g_{ij} \right),\\[3pt]
V^k=g^{ij}\Psi_{ij}^k -g^{ik}\Psi_{ji}^j,\\[3pt]
F=\tr_g \wt\Ric - \Psi_{ij}^k \tilde\nabla_k g^{ij} + \Psi^i_{jl}\tilde \nabla_k g^{ik}+g^{ij} \left(\Psi^k_{kl}\Psi^l_{ij} -\Psi^k_{jl}\Psi^l_{ik} \right).
\end{array}
\right.
\end{equation}
Here $\tilde \nabla$ denotes the connection with respect to the background metric $h$.
\end{defn}

Lee–LeFloch also introduced a notion of ADM mass for metrics in $C^0\cap W^{1,n}_{-q}$, $q\geq \frac{n-2}2$, and proved the positive mass theorem for spin manifolds in this setting, extending Witten’s argument to lower regularity; see also \cites{Bartnik,GrantTassotti,Li}. A notion of ADM mass for metrics with only $C^0$-regularity was also proposed by Burkhardt-Guim \cite{Guim}. A notion of mass has also been introduced by Lundgren–Meco \cite{LundgrenMeco} for metrics with Sobolev regularity $W^{1,2}_{\loc}\cap L^\infty$. While the spin case is now reasonably well understood, the non-spin case at this critical regularity remains subtle.

Recently, Hafemann \cite{Hafemann2026} established a positive mass theorem for non-spin manifolds  $(M^n,g_0)$, $3\leq n\leq 7$, assuming $g_0\in C^0\cap W^{1,n}_{\loc}$ and $g_0$ smooth outside a compact set. Under the stronger assumption $g_0\in C^0\cap W^{1,p}_{\loc}$, $p>n$, rigidity was also proved. One of the  goals of the present work is to extend Hafemann’s result \cite{Hafemann2026}  to the critical regularity class $g_0\in L^\infty\cap W^{1,n}_{\loc}$, including rigidity.

 To formulate rigidity in this non-smooth setting, we employ a generalized notion of distance for $L^\infty$-metrics introduced by De~Cecco–Palmieri \cite{DeCeccoPalmieri}; see also the distance defined and considered by Lamm–Simon in \cite{LammSimon}.

\begin{defn}\label{defn:intr-dist}
Suppose that $g$ is a $L^\infty$-metric on a smooth manifold $M$. Then we define $d_g(x,y):=\lim_{p\to+\infty} d_p(x,y)$, where
\begin{equation}
d_p(x,y)=\left(\inf  \left\{ ||\nabla u||_{L^p(M,g)}: u\in \mathrm{Lip}(M), u(x)=0,u(y)=1 \right\}\right)^{-1}
\end{equation}
for $x,y\in M$.
\end{defn}

The limit exists and defines a distance on $M$ by \cite[Theorem 2.6]{DeCeccoPalmieri} (see also \cite{LeeNaberNeumayer} for development related to scalar curvature stability); moreover it agrees with the Riemannian distance in the smooth case. An equivalent formulation is
\begin{equation}\label{alternativddefn} 
d_p(x,y):=\sup\left\{ |f(x)-f(y)|: f\in W^{1,p}(M), \int_M |\nabla f|^p \,d\mathrm{vol}_g\leq 1\right\}. \\ 
\end{equation}

Using this framework, we prove the following positive mass theorem.

\begin{thm} \label{thm:PMT} 
Suppose that $(M^n, g_0)$, $3\leq n\leq 7$ is a smooth Riemannian manifold and $g_0 \in L^\infty\cap W^{1,n}_{\loc}$, smooth outside a compact set $\Omega$, such that $g_0$ is asymptotically flat in $C^2_\delta$ for $\delta>\frac{n-2}2$ and $\scal(g_0)\in L^1(M\setminus \Omega)$. If $\scal(g_0)\geq 0$ in the distributional sense on $M$, then we have  $$m_{\ADM}(g_0)\geq 0.$$
Furthermore, if $m_{\ADM}(g_0)=0$, then $M$ is diffeomorphic to $\mathbb{R}^n$, $g_0$ is flat outside a compact set, and $g_0$ is globally isometric to the Euclidean space in the sense of Definition~\ref{defn:intr-dist}.
\end{thm}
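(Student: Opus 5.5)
We would prove Theorem~\ref{thm:PMT} by Ricci flow smoothing, following the abstract's announced strategy. The main analytic input, which the paper establishes later, is the following. For an $L^\infty\cap W^{1,n}_{\loc}$ metric $g_0$ that is $\Lambda$-bilipschitz to a bounded-geometry background $h$ and has $\scal(g_0)\ge\kappa$ distributionally, there is a smooth Ricci--DeTurck (equivalently Ricci) flow $g(t)$, $t\in(0,T]$. It satisfies $|\Rm(g(t))|\le C/t$ and $g(t)\to g_0$ in $W^{1,n}_{\loc}$ and in $L^p_{\loc}$ as $t\to0$. Its scalar curvature obeys $\scal(g(t))\ge\kappa$ classically. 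We take $h$ to be a smooth metric equal to $g_0$ outside a neighbourhood of $\Omega$. It then has bounded geometry, since $g_0$ is asymptotically flat in $C^2_\delta$.

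\textbf{Step 1: $\scal(g(t))\ge 0$ for $t>0$.} The natural approach is to test the evolution of the scalar curvature against a localized heat kernel $\varphi$ and let $t\to0$. The quantity $\int \scal(g(t))\varphi\,d\mathrm{vol}$ converges to $\langle\langle\scal(g_0),\varphi\rangle\rangle$. This is where $W^{1,n}$ is critical. The expression in Definition~\ref{defn:R-lower-dist} is quadratic in $\tilde\nabla g$, so we need $L^2$ convergence of $\tilde\nabla g(t)$. We also need smallness of $\|\tilde\nabla g\|_{L^n}$ on small balls to control the nonlinear error terms in the flow near $t=0$, via a Morrey/Sobolev scaling argument. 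Once $\scal\ge0$ holds in a weak sense at positive times, the maximum principle for $\partial_t\scal=\Delta\scal+2|\Ric|^2$ on a complete flow with $|\Rm|\le C/t$ gives $\scal(g(t))\ge0$ pointwise.

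\textbf{Step 2: Mass.} Away from $\Omega$, $g_0$ is smooth and asymptotically flat, so pseudolocality and local derivative estimates show that $g(t)$ stays asymptotically flat in $C^2_\delta$ with uniform constants for $t\in(0,T]$. The ADM mass is preserved under Ricci flow of AF metrics with $\scal\in L^1$ (Dai--Ma / Li), so $m_{\ADM}(g(t))=m_{\ADM}(g_0)$. Applying the smooth positive mass theorem of Schoen--Yau for $3\le n\le7$ to $g(t)$ then gives $m_{\ADM}(g_0)\ge0$.

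\textbf{Step 3: Rigidity.} If the mass vanishes, the smooth rigidity statement makes each $g(t)$ flat, so $(M,g(t))$ is isometric to $\mathbb{R}^n$ and $M\cong\mathbb{R}^n$. Since $g(t)\to g_0$ smoothly on compact subsets of $M\setminus\Omega$ and $g(t)$ is flat, $g_0$ is flat outside $\Omega$.

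For the global isometry, we pass to the limit in the distances: the flat distances $d_{g(t)}$ converge to the De~Cecco--Palmieri distance $d_{g_0}$. The approach is to use the uniform bilipschitz bounds $\Lambda^{-1}h\le g(t)\le\Lambda h$ to get equicontinuity, so a subsequence of the isometries $\Phi_t:(M,d_{g(t)})\to\mathbb{R}^n$ converges to a limit map. One then identifies the limit distance with $d_{g_0}$. The difficulty is that convergence in $L^p_{\loc}$ is not enough for distances to converge in general. For a Ricci flow with $|\Rm|\le C/t$, however, the distance distortion estimate $|\partial_t d|\le C t^{-1/2}$ should make $d_{g(t)}$ a Cauchy family as $t\to0$. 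We would then need to match its limit with $d_{g_0}$ through the variational characterization \eqref{alternativddefn}, using that $g(t)\to g_0$ almost everywhere with uniform ellipticity.

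\textbf{Main obstacle.} We expect the main obstacle to be Step~1: showing that the distributional lower bound survives into the smooth flow exactly at the critical exponent. There $\|\tilde\nabla g\|_{L^n}$ is scale invariant and only small on small balls, not globally. The identification of $d_{g_0}$ in Step~3 is the second delicate point.
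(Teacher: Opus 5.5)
The overall architecture matches the paper's. You flow $g_0$ by the Ricci--DeTurck flow, get $\scal(g(t))\ge0$ from Theorem~\ref{thm:RDF-preserved}, apply the smooth positive mass theorem and its rigidity at $t>0$, and let $t\to0$. Citing Theorem~\ref{thm:RDF-preserved} for Step~1 is legitimate. Your heuristic for it skips the real difficulty: for smooth $g(t)$ a weak bound already is pointwise. The paper transports the bound from $t=0$ with a solution $u$ of the conjugate backward equation, for which $\int(\scal-\kappa)u$ is monotone, together with uniform $L^p$ and $W^{1,2}$ bounds on $u$.

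\textbf{Step~2 (mass).} This is the first genuine gap. Dai--Ma and Li conserve mass for flows from smooth asymptotically flat data with bounded curvature and uniform weighted decay up to $t=0$. Here $|\Rm(g(t))|\sim t^{-1}$ on $\Omega$ and $g(t)\to g_0$ only in $W^{1,n}_{\loc}$. So neither their argument nor continuity of the mass at $t=0$ is available. Pseudolocality and local derivative estimates also do not propagate the rate $r^{-\delta}$; that needs a weighted maximum-principle argument. The paper proceeds as follows:
\begin{itemize}
\item It takes smooth $g_{i,0}\to g_0$ in $W^{1,n}_{\loc}$ with $g_{i,0}=g_0$ outside a compact set.
\item The flows $g_i(t)$ are asymptotically flat uniformly in $i$ (McFeron--Sz\'ekelyhidi).
\item Each satisfies $m_{\ADM}(g_i(t))=m_{\ADM}(g_0)$, by conservation for the associated Ricci flow plus Bartnik's coordinate independence.
\item Letting $i\to\infty$ gives only $g(t)\in C^{1,\a}_{\delta'}$ with $\delta'>\frac{n-2}2$, and lower semicontinuity $m_{\ADM}(g(t))\le m_{\ADM}(g_0)$.
\end{itemize}
This inequality suffices for both claims, since $0=m_{\ADM}(g_0)\ge m_{\ADM}(g(t))\ge0$. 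A smaller point: $C^2_\delta$ decay does not bound $\nabla^k\Rm$, so your $h=g_0$ near infinity need not have bounded geometry. Take $h$ Euclidean outside a compact set instead.

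\textbf{Step~3 (rigidity).} Flatness of $g_0$ outside $\Omega$ via local smooth convergence is fine. Arzel\`a--Ascoli applied to the isometries $\Phi_t$ is a valid substitute for the paper's $D^2\Psi_t=0$, $W^{2,n}_{\loc}$, Rellich--Kondrachov argument: injectivity comes from the uniform bi-Lipschitz bound and surjectivity from invariance of domain. Your distortion estimate is wrong as stated, since $|\Rm|\le C/t$ only gives $\partial_t d\le Ct^{-1}d$ from above. You don't need it: the flat Ricci flow is static, and the DeTurck diffeomorphisms move points by $\int|W|\le C\sqrt t$.

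What is missing is the identification of the limit distance with $d_{g_0}$, which is the actual content of the isometry claim. The paper's argument runs as follows.
\begin{itemize}
\item Fix $p>n$ and $\mu>1$ with dual exponent $\mu'$.
\item Take $f$ nearly optimal for $d_{g_0,p\mu}(x,y)$ in \eqref{alternativddefn}, truncate it, and cut it off at scale $R$. This costs $CR_0R^{\frac np-1}$, using $p>n$ and Euclidean volume growth.
\item H\"older with $\|g(t)-g_0\|_{L^{\mu'}(B_h(x_0,2R))}\to0$ then yields $d_{g_0,p\mu}(x,y)-\e\le d_{g(t),p}(x,y)\big(|\mathrm{Vol}_{g_0}B_h(x_0,2R)|^{\frac1{p\mu'}}+o(1)+CR_0R^{\frac np-1}\big)$.
\item Let $t\to0$ using continuity of $d_p$ and convergence of the maps, then let $p\to\infty$ and $R\to\infty$.
\item The reverse inequality is symmetric.
\end{itemize}
The loss from exponent $p\mu$ to $p$ absorbs the merely $L^{\mu'}_{\loc}$ convergence of $g(t)$ and disappears as $p\to\infty$. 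Your appeal to ``a.e.\ convergence with uniform ellipticity'' points the right way, but this exponent trade is the mechanism you still need to supply.
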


We refer the reader to \cite[Section 2]{McFeronSzekelyhidi2012} for the definition of the space $C^2_\delta$ and the ADM mass $m_{\ADM}$. Together with \cite[Lemma 2.7]{JiangShengZhang2022}, Theorem~\ref{thm:PMT} also gives an alternative proof to \cite[Theorem 1.1]{JiangShengZhang2022}.

\begin{rem}
In comparison with the method in \cites{Hafemann2026,JiangShengZhang2022}, Theorem~\ref{thm:PMT} is based on smoothing and the dimension restriction is only due to the availability of the positive mass theorem with asymptotically flat ends, but not its proof. In particular, using the recent work of Brendle–Wang \cite[Corollary 1.6]{BrendleWang} (see also \cite{PMT19}), Theorem~\ref{thm:PMT} will hold for all $n\geq 3$ under a slightly stronger asymptotical flatness condition. Indeed, it also holds as long as all other ends of $(M,g)$ are of bounded geometry. In contrast with the general case \cites{LesourdUngerYau2024,Zhu} where incomplete ends are allowed, the singular case remains unclear in general; see \cite{ChuLeeZhu2022} for some related development. 
\end{rem}

We also establish a corresponding rigidity result in the compact case, extending the rigidity \cites{SchoenYauTorus,SchoenYauTorusII,GromovLawson1980} in the smooth case. This can be viewed as a removable singularity result.

\begin{thm}\label{thm:compact}
Let $(\mathbb{T}^n,h)$, $n\geq 3$ be a torus with the standard metric. Suppose that $g_0 \in L^\infty\cap W^{1,n}$ is a Riemannian metric on $\mathbb{T}^n$ such that $\scal(g_0)\geq 0$ in the distributional sense. Then  $g_0$ is isometric to a flat torus 
in the sense of Definition~\ref{defn:intr-dist}.
\end{thm}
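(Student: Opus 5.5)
We follow the Ricci flow smoothing strategy that the abstract announces. The main analytic input, which the paper develops in later sections and which we take as given, is the following. For $g_0\in L^\infty\cap W^{1,n}$ on the torus (with background $h$ the flat metric, of bounded geometry) there is a smooth Ricci--DeTurck / Ricci flow $g(t)$, $t\in(0,T]$. It is uniformly bi-Lipschitz to $h$, i.e. $\Lambda'^{-1}h\le g(t)\le \Lambda' h$. It satisfies $|\Rm(g(t))|\le \varepsilon(t)/t$ with $\varepsilon(t)\to0$; this smallness is where the critical $W^{1,n}$ norm enters, since it is scale invariant and small on small balls. Moreover $g(t)\to g_0$ in $L^p_{\loc}$ (and in $W^{1,n}_{\loc}$ for the DeTurck flow) as $t\to0$. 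Finally, and crucially, $\scal(g(t))\ge 0$ classically for every $t>0$.

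\textbf{Step 1: flatness for $t>0$.} For each fixed $t>0$, $g(t)$ is a smooth metric on $\mathbb{T}^n$ with $\scal\ge 0$. By Schoen--Yau and Gromov--Lawson, $g(t)$ is flat for every $t\in(0,T]$. Consequently the Ricci flow is static: $\partial_t g=-2\Ric=0$. The Ricci flow $\tilde g(t)=\Phi_t^*g(t)$ is therefore a fixed flat metric $g_F$. In DeTurck gauge, $g(t)=(\Phi_t^{-1})^*g_F$, where $\Phi_t$ solves the harmonic map heat flow into $(\mathbb{T}^n,h)$ and the maps are uniformly bi-Lipschitz.

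\textbf{Step 2: passing to $t\to 0$.} Since all $g(t)$ are isometric to $g_F$, the distances $d_{g(t)}$ are mutually isometric via $\Phi_t\circ\Phi_s^{-1}$. The maps $\Phi_t$ are uniformly bi-Lipschitz (and $\Phi_t^{-1}$ satisfies $\partial_t \Phi^{-1}$ bounds coming from $|\Gamma(g(t))-\Gamma(h)|\lesssim \sqrt{\varepsilon(t)/t}$, which is integrable in a suitable $L^n$/time sense). Hence by Arzelà--Ascoli a subsequence converges uniformly to a bi-Lipschitz homeomorphism $\Phi_0$. This would give $d_{g_F}(\Phi_0x,\Phi_0y)=\lim d_{g(t)}(x,y)$.

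\textbf{Step 3: identification of the limit distance with $d_{g_0}$.} This is the main obstacle. We must show $d_{g(t)}\to d_{g_0}$ (Definition~\ref{defn:intr-dist}) pointwise, using only $L^p$ convergence of metrics with uniform ellipticity. We would use the characterization \eqref{alternativddefn}. The $\Lambda$ bounds give equicontinuity of all $d_p$ with respect to $h$. For a fixed $p$, the energies $\int|\nabla f|^p_{g}d\mathrm{vol}_g$ converge as $g(t)\to g_0$ a.e. (dominated convergence, uniform ellipticity). This gives $\Gamma$-convergence of the variational problems defining $d_p$, hence $d_{p,g(t)}\to d_{p,g_0}$. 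We then need to interchange $p\to\infty$ with $t\to0$. For this we would use the Lipschitz invariance under bi-Lipschitz maps together with monotonicity in $p$ after volume normalization, as in De~Cecco--Palmieri, which yields an upper and a lower bound separately. The lower bound $d_{g_0}\le\liminf d_{g(t)}$ follows from lower semicontinuity. For the reverse inequality, we would use that the limit $\Phi_0$ pulls back a smooth flat metric. Since $g(t)\to g_0$ a.e. and $\Phi_t\to\Phi_0$ with $W^{1,n}$ control, we get $g_0=\Phi_0^*g_F$ a.e. Then $d_{g_0}=\Phi_0^*d_{g_F}$, because the De~Cecco--Palmieri distance is invariant under bi-Lipschitz (Sobolev) changes of variables and agrees with the Riemannian distance for smooth $g_F$. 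With this identification, $\Phi_0$ is the required isometry from $(\mathbb{T}^n,d_{g_0})$ onto a flat torus.
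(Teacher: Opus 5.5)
Steps 1 and 2 follow the paper: classical torus rigidity for each $g(t)$, then the static Ricci flow, then a limit of the gauge diffeomorphisms. Your observation that $\Phi_t$ is uniformly bi-Lipschitz with respect to $h$ is correct, and using Arzel\`a--Ascoli to get a bi-Lipschitz homeomorphism $\Phi_0$ is a simpler substitute for the paper's $W^{2,n}$/Rellich--Kondrachov step.

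The gap is in Step 3, specifically the inequality $d_{g_F}(\Phi_0^{-1}x,\Phi_0^{-1}y)\le d_{g_0}(x,y)$.

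\begin{itemize}
\item Your dominated-convergence argument only handles a fixed test function. It therefore gives one half of the claimed $\Gamma$-convergence, $\liminf_{t\to0} d_{g(t),p}(x,y)\ge d_{g_0,p}(x,y)$. The other half concerns near-optimal functions for $g(t)$ that change with $t$, and convergence of energies for a fixed $f$ says nothing about them.
\item Your fallback, $g_0=(\Phi_0^{-1})^*g_F$ a.e., does not follow from $g(t)\to g_0$ a.e.\ plus uniform convergence of $\Phi_t$ with $W^{1,n}$ (or even uniform Lipschitz) bounds. The identity $g(t)=(\Phi_t^{-1})^*g_F$ is quadratic in $D\Phi_t^{-1}$, and those bounds give only weak-$*$ convergence of the differentials.
\item The inequality you attribute to lower semicontinuity is false as a general principle. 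Take smooth metrics $\lambda^{-2}h\le g_k\le h$, $\lambda>1$, equal to $\lambda^{-2}h$ on the $1/k$-neighbourhood of a minimizing segment from $a$ to $b$ and equal to $h$ off its $2/k$-neighbourhood. They converge to $h$ in every $L^p$, yet $d_{g_k}(a,b)\le\lambda^{-1}d_h(a,b)$.
\end{itemize}

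So you should not aim at $d_{g(t)}\to d_{g_0}$ or at an interchange of limits. Work at fixed $p>n$: use $d_{g(t),p}(x,y)=d_{g_F,p}(\Phi_t^{-1}x,\Phi_t^{-1}y)$ and continuity of $d_{g_F,p}$ to let $t\to0$, and only then let $p\to\infty$. Combined with your fixed-test-function bound, this correctly gives $d_{g_0}(x,y)\le d_{g_F}(\Phi_0^{-1}x,\Phi_0^{-1}y)$.

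For the missing half, the paper uses a comparison that is uniform in the test function, at the cost of raising the exponent. Let $\mu>1$ with conjugate exponent $\mu'$, and take any $f$ with $\int|\nabla f|^{p\mu}_{g(t)}\,d\mathrm{vol}_{g(t)}\le 1$. H\"older's inequality gives $\int|\nabla f|^{p}_{g_0}\,d\mathrm{vol}_{g_0}\le \mathrm{Vol}^{1/\mu'}+C\|g(t)-g_0\|_{L^{\mu'}}$. Hence $d_{g(t),p\mu}(x,y)\le\big(\mathrm{Vol}^{1/\mu'}+o(1)\big)^{1/p}d_{g_0,p}(x,y)$ as $t\to0$. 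Letting $t\to0$ and then $p\to\infty$ yields $d_{g_F}(\Phi_0^{-1}x,\Phi_0^{-1}y)\le d_{g_0}(x,y)$.

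Alternatively, you can complete your own ending by proving strong convergence of the differentials. Isometries between flat metrics satisfy $D^2\Phi_t=0$. Together with the uniform $L^n$ bound on $\tilde\nabla g(t)$, this gives uniform $W^{2,n}$ bounds, so $D\Phi_t\to D\Phi_0$ in every $L^q$ by Rellich--Kondrachov. Another option: weak lower semicontinuity gives $g_0\ge(\Phi_0^{-1})^*g_F$, and weak continuity of Jacobians gives equal determinants, which forces equality. Once $g_0=(\Phi_0^{-1})^*g_F$ a.e.\ is established, your bi-Lipschitz invariance of $d_p$ (chain rule plus area formula) finishes the proof immediately. That would be a cleaner ending than the paper's two-sided $p$ versus $p\mu$ comparison.
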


\begin{rem}
In Theorem~\ref{thm:PMT} and Theorem~\ref{thm:compact}, if in addition $g_0$ is continuous, then the isometry is in the sense of standard metric spaces, see \cite[Section 3]{DeCeccoPalmieri}. 
\end{rem}

The compact rigidity is a generalization of the work of Jiang–Sheng–Zhang \cite{JiangShengZhang2023} for $W^{1,p}$-metrics with $p>n$; see also \cites{LammSimon,LeeLiu,LitzingerSimon2025,LiSc} for some generalization. These rigidity results stand in sharp contrast to recent examples constructed by Cecchini–Frenck–Zeidler \cite{CecchiniFrenckZeidler2024}, who exhibited $L^\infty\cap W^{2,p/2}$-metrics for $p<n$ that are smooth away from a point, have positive distributional scalar curvature, yet admit no smooth approximation with non-negative scalar curvature. In this sense, our results are optimal.

The technical core of this paper is the proof that an $L^\infty\cap W^{1,n}$-metric with a lower bound on the distributional scalar curvature admits smooth approximations preserving this lower bound, extending the main result in \cite{JiangShengZhang2023} to the complete setting at critical regularity. This smoothing result is independent of compactness or asymptotic flatness and relies on stability estimates for the Ricci flow. 

\begin{thm}\label{thm:smoothing}
Let $(M^n,h)$, $n\geq 3$ be a complete Riemannian manifold with bounded geometry, see Definition~\ref{defn:bdd-geom}. Suppose that $g_0 \in L^\infty\cap W^{1,n}$ is a Riemannian metric on $M$ such that $\scal(g_0)\geq \kappa$ in the distributional sense. Then there exists a family of smooth metrics $g_i$ on $M$ with bounded geometry such that $$\Lambda^{-1}g_0\leq g_i\leq \Lambda g_0,\quad g_i\to g_0 \;\;\mathrm{in}\;\; W^{1,n}_{\loc}, \quad \text{and} \quad \scal(g_i)\geq \kappa$$ for some $\Lambda>1$. Furthermore, the convergence can be improved to $C^0_{\loc}$ if $g_0$ is continuous. 
\end{thm}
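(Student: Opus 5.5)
The plan is to run a Ricci--DeTurck flow (equivalently, a Ricci flow after pulling back by diffeomorphisms) starting from $g_0$, and to take $g_i := g(t_i)$ with $t_i\to0$. The proof splits into three stages.

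\textbf{Stage 1: existence and estimates.} First I would mollify $g_0$ with respect to the bounded-geometry background $h$, producing smooth metrics $g_{0,k}$. These satisfy $\Lambda^{-1}h\le g_{0,k}\le\Lambda h$ uniformly, converge to $g_0$ in $W^{1,n}_{\loc}$, and have uniformly small local $W^{1,n}$-energy on balls of a fixed small radius $r_0$; this last point uses absolute continuity of $\int|\tilde\nabla g_0|^n$. Because $W^{1,n}$ is scale-invariant, this smallness plays the role of the small $L^\infty$ oscillation in Simon's and Koch--Lamm's $L^\infty$-close Ricci--DeTurck theory. On each flow $g_k(t)$ I would establish, for $t\in(0,T]$ with $T$ uniform in $k$:
\[
\Lambda'^{-1}h\le g_k(t)\le\Lambda' h,\qquad |\tilde\nabla^m g_k(t)|\le C_m t^{-m/2},\qquad \int_{B_h(x,r)}|\tilde\nabla g_k(t)|^n\le C\!\int_{B_h(x,2r)}|\tilde\nabla g_{0,k}|^n+Ct.
\]
The derivative bounds would come from a blow-up/contradiction argument using scale invariance of the $W^{1,n}$ norm together with local $L^n$ energy estimates obtained by testing the DeTurck equation against $|\tilde\nabla g|^{n-2}\tilde\nabla g$ with cutoffs. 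Passing to a limit in $k$ then gives a flow $g(t)$, smooth for $t>0$. It has bounded geometry for each $t>0$, is uniformly equivalent to $g_0$ (which yields the bound $\Lambda^{-1}g_0\le g_i\le\Lambda g_0$), and satisfies $g(t)\to g_0$ in $W^{1,n}_{\loc}$ as $t\to0$. The convergence is upgraded to $C^0_{\loc}$ when $g_0$ is continuous, by the standard $L^\infty$ theory.

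\textbf{Stage 2: the lower bound on scalar curvature.} The key step is showing $\scal(g(t))\ge\kappa$ classically. Along the DeTurck flow the scalar curvature satisfies $\partial_t R\ge\Delta R+\langle W,\nabla R\rangle+\tfrac2n R^2$. Note that $R$ itself is not controlled as $t\to0$; only the distributional pairing is. I would therefore test against the heat kernel. Fix $(x_0,t_0)$ and let $\varphi(\cdot,t)\ge0$ solve the conjugate (backward) heat equation associated with the drift operator, with $\varphi(t_0)=\delta_{x_0}$. Write $\mu_t$ for the $g(t)$-volume measure. From the inequality for $R$ one gets that $t\mapsto\int(R-\kappa)_-\varphi\,d\mu_t$ is essentially nonincreasing forward in time, up to the quadratic term, which has the favourable sign when $\kappa\ge0$ and can be absorbed otherwise via $e^{Ct}$ factors, as in Burkhardt-Guim and Jiang--Sheng--Zhang. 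It then suffices to show
\[
\liminf_{s\to0}\int_M (R(g(s))-\kappa)\,\varphi(s)\,d\mu_s\ \ge 0 .
\]
To prove this, I would write $R(g(s))\,d\mu_s$ in the divergence form of Definition~\ref{defn:R-lower-dist}, namely $-\langle V_s,\tilde\nabla(\varphi\,\tfrac{d\mu_s}{d\mathrm{vol}_h})\rangle+F_s\varphi\,\tfrac{d\mu_s}{d\mathrm{vol}_h}$. Here $V_s$ is linear in $\tilde\nabla g$ and $F_s$ is quadratic in it, so $V_s\to V_0$ in $L^n$ and $F_s\to F_0$ in $L^{n/2}$ locally. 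Since $\varphi(s)$ is a smooth positive function converging as $s\to0$ in $W^{1,n/(n-1)}$ and in $L^\infty$ (with Gaussian decay supplying the tails on the noncompact $M$), the pairing converges to $\langle\langle\scal(g_0),\varphi(0)\rangle\rangle\ge\kappa\int\varphi(0)$, which gives the claim.

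\textbf{Main obstacle.} I expect Stage 2 to be the hardest part, and specifically the convergence of $\varphi(s)$ as $s\to0$. The drift $W$ and the Laplacian of $g(s)$ are only controlled by $s^{-1/2}$ and $W^{1,n}$ bounds, so uniform gradient bounds for $\varphi$ down to $s=0$ need De Giorgi--Nash type estimates for the backward equation with merely $L^\infty$ coefficients. I would first try to avoid this by using a time-independent test function pulled back along the DeTurck diffeomorphisms: one fixes $\varphi\in C_c^\infty$ at a small time $s_0$, freezes it, and estimates the error $\int_0^{s_0}$ using $|\partial_t g|\le Cs^{-1}|\tilde\nabla g|$-type bounds together with the $L^n$ smallness. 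Finally, bounded geometry of each $g_i$ follows from the derivative estimates at the fixed positive time $t_i$.
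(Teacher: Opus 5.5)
Your overall architecture matches the paper's: the Ricci--DeTurck flow of Theorem~\ref{thm:RDF-exist}, a scalar curvature integral weighted by a solution of the adjoint equation, comparison with the distributional pairing as $s\to0$, and Gaussian decay for the tails. Stage~1 is in substance just Theorem~\ref{thm:RDF-exist}. But Stage~2 has a genuine gap at exactly the step you flag, and the properties you assume there are not available.

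\textbf{Why the assumed bounds fail.} The adjoint equation $\partial_s\varphi=-\Delta\varphi+\nabla_W\varphi+\scal_{g(s)}\varphi$ carries a reaction term. Before you know $\scal\ge\kappa$, the only lower bound is $\scal_{g(s)}\ge-\e s^{-1}$. The maximum principle, or the kernel bound of Proposition~\ref{prop:heat-kernel}, then gives only $\varphi(s)\lesssim s^{-\e}$ (Lemma~\ref{lma:esti-u}). A uniform $L^\infty$ bound would need a uniform lower bound on $\scal$, which is what you are proving. Convergence of $\varphi(s)$ in $W^{1,n/(n-1)}$ is likewise unproved.

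\textbf{The freezing fallback does not close the gap.} Freezing the test function on $[0,s_0]$ destroys the exact monotonicity. The derivative $\frac{d}{ds}\int(\scal-\kappa)\phi\,d\mathrm{vol}_{g(s)}$ then contains $(2|\Ric|^2-\scal^2)\phi$, which has no sign for $n\ge3$, and $\scal\,\Delta\phi$. Here $\|\phi\|_{L^\infty}\lesssim s_0^{-\e}$, the $C^2$-norm of $\phi$ is not uniform in $s_0$, and you only know $\int_0^{s_0}\!\int|\Rm|^2\to0$ with no rate. So these errors are not shown to vanish.

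\textbf{Use the signed quantity.} You should track $\int(\scal-\kappa)\varphi\,d\mathrm{vol}_{g(t)}$ itself. Its derivative is exactly $2\int|\Ric|^2\varphi\,d\mathrm{vol}_{g(t)}\ge0$ for every $\kappa$ (Lemma~\ref{lma:dist-monot}), so no $e^{Ct}$ factors are needed. Monotonicity of $\int(\scal-\kappa)_-\varphi$ would instead require $\int(\scal-\kappa)_-\varphi\to0$. Neither your $\liminf$ statement nor distributional convergence gives that.

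\textbf{The missing idea.} $\varphi(s)$ never needs to converge. The paper applies the hypothesis $\langle\langle\scal(g_0)-\kappa,u_R(s)\rangle\rangle_{g_0}\ge0$ directly to the time-$s$ function $u_R(s)=\varphi_Ru(s)$. It then bounds $\langle\langle\scal(g(s)),u_R(s)\rangle\rangle_{g(s)}-\langle\langle\scal(g_0),u_R(s)\rangle\rangle_{g_0}$ by products of two kinds of factors. The first are $|g(s)-g_0|$ and $|\tilde\nabla(g(s)-g_0)|$, which tend to zero locally in $L^{2n/(n-2)}$ and $L^n$. The second are norms of $u(s)$ that are merely \emph{bounded} uniformly in $s$:
\begin{itemize}
\item local $L^p$ bounds for $p\le(4\e)^{-1}$ (Proposition~\ref{prop:Lp-u}), obtained from $u\lesssim s^{-\e}$, integrability of $s^{-p\e}$, and the spacetime $L^2$ curvature bound of Lemma~\ref{lma:spacetime-L^2};
\item a local $W^{1,2}$ bound (Proposition~\ref{prop:Wp-u}), obtained via the Bochner formula, the interpolation inequality of Lemma~\ref{lma:interpolation}, and a barrier $\int\eta^2|\tilde\nabla g|^2u^2$ that absorbs $|\Rm|^2u^2$;
\item Gaussian-decaying versions of both, for the coverings at infinity.
\end{itemize}
Because $u$ is paired against $|\tilde\nabla g|^2\in L^{n/2}$, one needs $p$ somewhat larger than $\frac{n}{n-2}$ (the paper uses $\frac{2n}{n-2}$), hence $\e$ small. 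This comes from $t(|\tilde\nabla^2 g|+|\tilde\nabla g|^2)\to0$, which uses that $g_0$ is globally in $W^{1,n}$ (the last clause of Theorem~\ref{thm:RDF-exist}). These uniform estimates are the technical core of the paper's proof, and they are what your Stage~2 lacks.
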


In particular, if $g_0$ is a $C^0\cap W^{1,n}$-metric then the scalar curvature in both the sense of approximations \cite{Guim-1} and in distributional sense coincide. Together with \cite[Corollary 1.6]{Guim-0}, it gives an answer to a question in \cite[page 9]{Guim-1}, see the discussion after Question 5.1 there.

\begin{rem}
The sequence of approximations $g_i$ is constructed by taking $g_i:=g(t_i)$ where $g(t)$ is the Ricci–DeTurck flow constructed in \cite{ChuLee2025} (see also \cite{LammSimon}) and $\{t_i\}$ is an arbitrary sequence of times such that $t_i\to 0^+$. This fact is crucial in the applications, that is, the positive mass theorem and torus rigidity. 
\end{rem}

Theorem~\ref{thm:smoothing} is based on showing that the Ricci flow smoothing preserves the scalar curvature lower bound, using an idea motivated by \cite{JiangShengZhang2022},
and the analysis developed by the second and third named author in \cite{LitzingerSimon2025}, see Theorem~\ref{thm:RDF-preserved} for the detailed statement.

\subsection*{Organization of the paper}
Section~\ref{sec:pre} contains preliminary estimates for the Ricci–DeTurck flow.
In Section~\ref{sec:heat}, we derive a priori estimates for the backward heat equation along the flow.
Section~\ref{sec:scalar} establishes preservation of the distributional scalar curvature lower bound and thus proves Theorem~\ref{thm:smoothing}. Finally, Section~\ref{sec:rigidty} proves the positive mass theorem and torus rigidity.

\subsection*{Acknowledgments}
The first named author would like to thank Giorgio Gatti,  Jintian Zhu and Huaiyu Zhang for some useful discussion.  M.-C. Lee is supported by Hong Kong RGC grants No. 14300623 and No. 14304225, and an Asian Young Scientist Fellowship. F. Litzinger and M. Simon were supported by the Special Priority Program SPP 2026 “Geometry at Infinity” of the German Research Foundation (DFG).

\section{Ricci flow smoothing: a priori estimates} \label{sec:pre}
At the heart of the paper lie methods for producing smooth approximations of low regularity metrics using the Ricci flow. For our purposes, we will use the gauged version, i.\,e. Ricci–DeTurck $h$-flow, which is a strictly parabolic flow diffeomorphic to the Ricci flow. More precisely, on a Riemannian manifold $(M,h)$, a smooth family of metrics $g(t)$, $t\in I,$ $I$ an interval, is a solution to the Ricci–DeTurck $h$-flow if it satisfies 
\begin{equation}\label{eqn:RDF-orig}
    \left\{
    \begin{array}{ll}
        \partial_t g_{ij} =-2R_{ij} +\nabla_i W_j+\nabla_j W_i,  \\
         W^k=g^{pq}\left(\Gamma^k_{pq}-\tilde \Gamma^k_{pq} \right),
    \end{array}
    \right.
\end{equation}
for all $t\in I,$
where $\Gamma$ and $\tilde\Gamma$ denote the Christoﬀel symbols of $g(t)$ and $h$, respectively. Equivalently, it satisfies 
\begin{equation}\label{eqn:RDF}
\begin{split}
\partial_t \hat g_{ij}&=\hat g^{pq} \tilde \nabla_p\tilde\nabla_q \hat g_{ij}-\hat g^{kl}\hat g_{ip}\tilde g^{pq} \tilde R_{jkql}-\hat g^{kl}\hat g_{jp} \tilde g^{pq}\tilde R_{ikql}\\[3pt]
&\quad +\frac12 \hat g^{kl}\hat g^{pq}\big(\tilde \nabla_i \hat g_{pk}\tilde \nabla_j\hat  g_{ql}+2\tilde \nabla_k \hat g_{jp}\tilde \nabla_q \hat g_{il}-2\tilde \nabla_k\hat  g_{jp}\tilde \nabla_l \hat g_{iq}\\[3pt]
&\quad -2\tilde \nabla_j\hat  g_{pk}\tilde \nabla_l \hat g_{iq}-2\tilde \nabla_i\hat  g_{pk}\tilde \nabla_l \hat g_{qj}\big),
\end{split}
\end{equation}
see \cite[Lemma 2.1]{Shi1989}. For every Ricci–DeTurck $h$-flow $g(t)$ on a compact manifold, we can find a related Ricci flow
$\hat g(t)$ with $\hat g(t_0) = g(t_0)$ by solving the following system of ordinary differential equations:
\begin{equation}\label{eqn:W-defn}
    \left\{
    \begin{array}{ll}
      \partial_t \Psi_t(x)&=-W(\Psi_t(x),t),\\[3pt]
    \Psi_{t_0}(x)&=x,
   \end{array}
    \right.
\end{equation}
and then setting $\hat g(t):=\Psi_t^*g(t)$.

Assuming an initial metric $g_0$ is in $L^\infty\cap W^{1,n}$, Chu and the first named author \cite{ChuLee2025}, building upon the work of Lamm and the third named author \cite{LammSimon}, established the following existence result. 
\begin{thm}\label{thm:RDF-exist}
Let $(M^n,h)$ be a complete Riemannian manifold with bounded geometry. There exists $\e_0(n,h)>0$ such that if $g_0$ is a $L^\infty\cap W^{1,n}_{\loc}$-metric with respect to $h$ on $M$ so that if
\begin{enumerate}
\item[(i)] $\Lambda_0^{-1}h\leq g_0\leq \Lambda_0 h$ for some $\Lambda_0>1$,
\item[(ii)] for all $x\in M$, 
$$\left(\fint_{B_h(x,1)}|\tilde\nabla g_0|^n d\mathrm{vol}_h \right)\leq \e<\e_0,$$
\end{enumerate}
then there exist $T(n,h),C(n,h),\Lambda(n,\Lambda_0,h)>0$ and a smooth solution $g(t)$ to the Ricci–DeTurck $h$-flow on $M\times (0,T]$ such that
\begin{enumerate}
    \item[(a)] $\Lambda^{-1}h\leq g(t)\leq \Lambda h$,
    \item[(b)] for all $x\in M$, 
$$\left(\fint_{B_h(x,1)}|\tilde\nabla g(t)|^n d\mathrm{vol}_h \right)\leq C\e+Ct^{1/n},$$
    \item[(c)] for any $k\in \mathbb{N}$, there exist $C_k(n,h,\Lambda_0)>0$ such that $$\sup_M |\tilde\nabla^k g(t)|\leq C_kt^{-k/2},$$
    \item[(d)] $g(t)\to g_0$ in $W^{1,n}_{\loc}$ as $t\to 0^+$,
    \item[(e)] $g(t)\to g_0$ in $C^0_{\loc}(\Omega)$ where $\Omega$ is any open set where $g_0$ is continuous.
\end{enumerate}
Furthermore, if $g_0$ is globally in $W^{1,n}$, then for any $\e>0$ and $k\in \mathbb{N}$, there exists $T_{\e,k}>0$ such that, for all $t\in (0,T_{\e,k}]$ and $1\leq \ell\leq k$,
    $$\sup_M |\tilde\nabla^\ell g(t)|\leq \e t^{-\ell}.$$
\end{thm}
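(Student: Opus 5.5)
Since this statement is quoted from \cite{ChuLee2025} (building on \cite{LammSimon}), I would prove it by approximation. I would produce smooth approximating flows with estimates that are uniform in the approximation and then pass to a limit. First I mollify $g_0$ using the bounded geometry of $h$ (partition of unity subordinate to uniformly controlled normal coordinate charts). This gives smooth $g_{0,i}$ with $\Lambda_0'^{-1}h\le g_{0,i}\le \Lambda_0' h$, $g_{0,i}\to g_0$ in $W^{1,n}_{\loc}$, and $\fint_{B_h(x,1)}|\tilde\nabla g_{0,i}|^n\le C\e$ uniformly in $x$ and $i$. Shi's theory \cite{Shi1989} gives smooth solutions $g_i(t)$ of \eqref{eqn:RDF} on short time intervals that a priori depend on $i$. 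The task is to show these intervals and all estimates (a)--(c) are independent of $i$.

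The key a priori estimate is an $\e$-regularity statement, which I would establish by a continuity argument. Suppose on $[0,S]$ we have $\Lambda^{-1}h\le g_i\le\Lambda h$ and $\sup_M t^{1/2}|\tilde\nabla g_i|\le \sigma$ with $\sigma$ small. I would then show these bounds improve strictly for $S\le T(n,h)$.
\begin{itemize}
\item The $L^\infty$ bound follows from the maximum principle applied to the equation \eqref{eqn:RDF} for $g$ and $g^{-1}$ viewed against $h$. The quadratic gradient terms are absorbed using $|\tilde\nabla g|^2\le \sigma^2/t$ together with a Moser/Nash-type local $L^n$ control, since $\sigma^2/t$ alone is not integrable in time.
\item The local energy $\int \varphi^n|\tilde\nabla g|^n$, for a cutoff $\varphi$ on $B_h(x,1)$, satisfies a differential inequality $\frac{d}{dt}\int\varphi^n|\tilde\nabla g|^n\le C\int\varphi^n|\tilde\nabla g|^{n+2}+C$. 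This is handled by the smallness $\sigma$ and a Gagliardo–Nirenberg interpolation with the good term $-\int\varphi^n|\tilde\nabla g|^{n-2}|\tilde\nabla^2 g|^2$. Integrating yields (b), including the $Ct^{1/n}$ contribution from the curvature of $h$.
\item To improve $\sigma$, I would argue by contradiction with a blow-up. If $t^{1/2}|\tilde\nabla g_i|$ reaches $\sigma$ at some first point, I rescale parabolically by $\sqrt t$. The rescaled flows have $\tilde\nabla g$ bounded, so Shi-type derivative estimates give smooth subconvergence to a Ricci–DeTurck flow relative to a flat background. By scale invariance of the $W^{1,n}$ energy and (b), this limit has $\int_{B(1)}|\tilde\nabla g|^n\lesssim\e$ on all unit balls, and in the limit the gradient energy is actually zero. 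This contradicts $|\tilde\nabla g|=\sigma$ at the base point.
\end{itemize}
The higher bounds (c) then follow from Bernstein–Shi local estimates, since \eqref{eqn:RDF} is strictly parabolic with controlled coefficients.

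Given uniform estimates, Arzelà–Ascoli on $M\times[\tau,T]$ and a diagonal argument give the limit flow $g(t)$ satisfying (a)--(c). For (d), I would use three facts: the local energy inequality shows $\limsup_{t\to0}\int\varphi^n|\tilde\nabla g(t)|^n\le\int\varphi^n|\tilde\nabla g_0|^n$; $g(t)\to g_0$ in $L^p_{\loc}$ because $|\partial_t g|$ is controlled in a distributional/integrated sense; and $\tilde\nabla g(t)\rightharpoonup\tilde\nabla g_0$ weakly. Together with uniform convexity of $L^n$, these give strong convergence. For (e), near a point where $g_0$ is continuous, I would compare with the flow starting from the locally almost-constant metric. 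The interior estimate $|g(t)-g_0(x_0)|\le\operatorname{osc}_{B(x_0,r)}g_0+C(\sqrt t/r)^{\alpha}+C\e$ comes from a local maximum principle and gives local uniform convergence.

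For the final claim, the global hypothesis $\tilde\nabla g_0\in L^n(M)$ implies uniform equi-integrability: $\omega(r):=\sup_x\int_{B_h(x,r)}|\tilde\nabla g_0|^n\to0$ as $r\to0$. Indeed, $|\tilde\nabla g_0|^n$ is integrable, and bounded geometry makes the balls uniformly comparable. Applying the scale-invariant $\e$-regularity above at scale $r$, with $\e$ replaced by $\omega(r)$, gives $t^{\ell/2}|\tilde\nabla^\ell g(t)|\le C_\ell\,\omega(r)^{\beta}$ for $t\le c r^2$. Choosing $r$ small then yields the stated bound, in fact the stronger form $\e t^{-\ell/2}$. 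I expect the main obstacle to be the blow-up/$\e$-regularity step. In particular, one must show that smallness of the scale-invariant $W^{1,n}$ energy, which is not preserved pointwise, prevents $t^{1/2}|\tilde\nabla g|$ from saturating. This requires the local energy inequality to hold with constants independent of the approximation.
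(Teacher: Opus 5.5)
The paper gives no proof of its own here. It quotes existence and (a)--(d) from \cite[Theorem~1.1]{ChuLee2025} and the last assertion from \cite[Theorem~3.2]{ChuLee2025}, so your proposal is a reconstruction of the cited theory. Your architecture is the right one: mollify, prove uniform a priori estimates driven by local $W^{1,n}$-smallness, apply Shi-type estimates, and pass to a limit by compactness. Several parts already work:
\begin{itemize}
\item The local energy inequality, closed by Sobolev/Gagliardo--Nirenberg against the good term, is exactly the mechanism of Lemma~\ref{lma:spacetime-L^2}.
\item Your proof of (d) (energy $\limsup$, weak convergence, uniform convexity of $L^n$) is fine once you use Rellich to upgrade the distributional convergence $g(t)\to g_0$ to $L^p_{\loc}$.
\item Your proof of the last claim via $\omega(r)=\sup_x\int_{B_h(x,r)}|\tilde\nabla g_0|^n\to0$ is correct. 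It gives the stronger $\e t^{-\ell/2}$, which is the form the paper actually uses (e.g.\ $|\tilde\nabla g|\le \e t^{-1/2}$ in the proof of Lemma~\ref{lma:dist-monot}).
\end{itemize}
However, three steps do not close as written.

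\emph{The blow-up.} Suppose $\e$ is fixed and the sequence runs over the mollification index of one $g_0$. Then the saturation times need not tend to $0$, so the rescaled background need not become flat. The base points may escape to infinity, where only the uniform bound $\e$ is available. Any resulting $T$ would also depend on $g_0$. The limit then only has energy $\lesssim\e$ per unit ball, which does not contradict $|\tilde\nabla g|=\sigma$ at a point unless you already have quantitative $\e$-regularity. Run the contradiction over sequences of data with $\e_k\to0$ and $T_k\to0$ instead. This proves $\sup_M t^{1/2}|\tilde\nabla g|\le\sigma(\e)$ with $\sigma(\e)\to0$, which is exactly what your last paragraph needs. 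Alternatively, Moser-iterate \eqref{eqn:evo-grad}, treating $|\tilde\nabla g|^2$ as a potential of small local $L^{n/2}$-norm.

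\emph{The upper bound in (a).} The lower bound is fine: $\tr_g h$ is the energy density of the harmonic map heat flow in DeTurck gauge, and Bochner gives $\mathfrak{L}\,\tr_g h\le C(\tr_g h)^2$. For the top eigenvalue of $g$ relative to $h$, however, the quadratic terms in \eqref{eqn:RDF} have no sign, $\sigma^2/t$ is not integrable, and a maximum principle cannot use integral information. Moser with a source merely in $L^\infty_tL^{n/2}_x$ is scale-critical, so it fails too. What works is averaging:
\begin{itemize}
\item In $h$-normal coordinates, write \eqref{eqn:RDF} as a divergence plus $O(|\tilde\nabla g|^2+1)$ and test it on $[0,t]$ against a normalized cutoff $\phi$ at scale $r=\sqrt t$ about $x$.
\item The divergence part contributes at most $C\sigma r^{-1}\int_0^t s^{-1/2}\,ds\le C\sigma$.
\item The remaining terms contribute at most $C(\e+t^{1/n})^{2/n}+Ct$, by H\"older and (b).
\item The gradient bound at time $t$ gives $|g(x,t)-\int\phi\,g(t)|\le C\sigma$.
\end{itemize}
Since $\int\phi\,g_0$ lies, up to $O(r^2)$, in the convex set $\{\Lambda_0^{-1}h\le\cdot\le\Lambda_0 h\}$, you get (a) with, say, $\Lambda=2\Lambda_0$, which is a strict improvement. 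A Duhamel formula with Gaussian bounds as in Proposition~\ref{prop:heat-kernel}, split at $s=t/2$, also works.

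\emph{Part (e).} Your estimate carries $+C\e$ with the fixed $\e$. Sending $t\to0$ and then $r\to0$ therefore only yields $\limsup_{t\to0}\sup_K|g(t)-g_0|\le C\e$, not convergence. Replace $\e$ by the local energy at scale $\sim\sqrt t$ near the compact set $K\subset\Omega$. This tends to $0$ by absolute continuity of $\int|\tilde\nabla g_0|^n$ (the same equi-integrability you use at the end) together with your local energy inequality. Combined with the localized $\e$-regularity and the averaging estimate, this gives $|g(x,t)-\fint_{B_h(x,\sqrt t)}g_0|\to0$ uniformly on $K$, hence (e).

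\emph{Constants.} Your $\e_0$ and $T$ will depend on $\Lambda_0$, whereas the statement lists $\e_0(n,h)$ and $T(n,h)$. The $h$-flow is invariant under $g\mapsto cg$, $t\mapsto ct$, which changes both $\Lambda_0$ and the energy, so such dependence is to be expected. It is harmless for how the theorem is used in this paper.
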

\begin{proof}
The existence and the conclusions (a)-(d) follow from \cite[Theorem 1.1]{ChuLee2025}. The higher order asymptotics as $t\to  0$ follow from \cite[Theorem 3.2]{ChuLee2025}.
\end{proof}

Let $g(t)$ be the Ricci–DeTurck $h$-flow from Theorem~\ref{thm:RDF-exist} and fix $t_0\in (0,T]$. We 
will show in Theorem~\ref{thm:RDF-preserved} that $\scal(g(t))\geq \kappa$ in the classical sense if the initial rough metric $g_0$ has the same lower bound in the distributional sense, using the method of  \cite{JiangShengZhang2023}, and \cite{LitzingerSimon2025}, where the method was further developed. In the remainder of this section, and in the following section, we derive some a priori estimates which will be used for this purpose.  

\begin{lma}\label{lma:spacetime-L^2} 
Let $(M^n,h)$ be a complete Riemannian manifold with bounded geometry. For any $\Lambda>1$, there exist $L,\e_1>0$ depending only on $n,h,\Lambda$ such that the following holds: Suppose that $g(t)$ is a solution to the Ricci–DeTurck $h$-flow on $M\times (0,S]$ 
such that \begin{enumerate}
    \item[(i)] $\Lambda^{-1}h\leq g(t)\leq \Lambda h$,
    \item[(ii)] for some $x\in M$, we have
$$\left(\fint_{B_h(x,2)}|\tilde\nabla g(t)|^n d\mathrm{vol}_h \right)^\frac1n\leq \e_1 \;\text{ for all } t\in [0,S].$$
\end{enumerate}
Then for all $s\in (0,S]$ and $p\in [2,n]$, we have  
\begin{equation}
\left\{
\begin{array}{ll}
\displaystyle\int^s_0\fint_{B_h(x,1)} |\Rm(g)|^2 d\mathrm{vol}_h dt\leq L(s+\e_1^2),\\[9pt]
\displaystyle\int^s_0\fint_{B_h(x,1)} |\tilde\nabla g|^{p+2} d\mathrm{vol}_h dt\leq L(s+\e_1^p).
\end{array}
\right.
\end{equation}
\end{lma}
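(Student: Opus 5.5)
We prove both bounds by a localized energy estimate for the Ricci–DeTurck system \eqref{eqn:RDF}, in the spirit of \cite{LammSimon} and \cite{ChuLee2025}. The second inequality is the main one. Since $|\Rm(g)|\leq C(|\tilde\nabla^2 g|+|\tilde\nabla g|^2+1)$ by the uniform equivalence (i), the first inequality follows once we also bound $\int_0^s\fint_{B_h(x,1)}|\tilde\nabla^2 g|^2$.

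\textbf{Step 1: energy identity.} Fix a cutoff $\phi$ supported in $B_h(x,2)$ with $\phi\equiv1$ on $B_h(x,1)$ and $|\tilde\nabla\phi|\leq C$. Write \eqref{eqn:RDF} schematically as
\[
\partial_t g=g^{-1}\tilde\nabla^2 g+g^{-1}*g^{-1}*\tilde\nabla g*\tilde\nabla g+\tilde{\Rm}*g*g^{-1}.
\]
Differentiate once and test against $\phi^2|\tilde\nabla g|^{p-2}\tilde\nabla g$, or in the case $p=2$ simply against $\phi^2\tilde\nabla g$. After integration by parts, using (i), we expect
\[
\frac{d}{dt}\int\phi^2|\tilde\nabla g|^p+c\int\phi^2|\tilde\nabla g|^{p-2}|\tilde\nabla^2 g|^2
\leq C\int\phi^2|\tilde\nabla g|^{p-1}\bigl(|\tilde\nabla g|\,|\tilde\nabla^2 g|+|\tilde\nabla g|^3\bigr)+C\int_{B_h(x,2)}\bigl(|\tilde\nabla g|^p+1\bigr).
\]
The cutoff terms are absorbed here as well. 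The term $|\tilde\nabla g|^{p}|\tilde\nabla^2 g|$ is split by Young's inequality into $\delta|\tilde\nabla g|^{p-2}|\tilde\nabla^2g|^2+C_\delta|\tilde\nabla g|^{p+2}$. So everything reduces to controlling $\int\phi^2|\tilde\nabla g|^{p+2}$.

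\textbf{Step 2: the critical absorption.} This is the main obstacle. Set $f=|\tilde\nabla g|^{p/2}$. Then $\int\phi^2|\tilde\nabla g|^{p+2}=\int(\phi f)^2|\tilde\nabla g|^2$. By Hölder's inequality with exponents $\frac{n}{n-2}$ and $\frac n2$, and then the Sobolev inequality on $B_h(x,2)$ (available by bounded geometry),
\[
\int(\phi f)^2|\tilde\nabla g|^2\leq\|\phi f\|_{L^{2n/(n-2)}}^2\,\|\tilde\nabla g\|_{L^n(B_h(x,2))}^2\leq C\e_1^2\int\bigl(|\tilde\nabla(\phi f)|^2+(\phi f)^2\bigr).
\]
We have $|\tilde\nabla f|^2\leq C|\tilde\nabla g|^{p-2}|\tilde\nabla^2 g|^2$. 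Hence, for $\e_1$ small depending on $n,h,\Lambda$, this term and the $\delta$-term are absorbed into the good term on the left. This is exactly where the smallness of the critical $L^n$ norm in (ii) enters. The leftover terms $\int_{B_h(x,2)}|\tilde\nabla g|^p$ are bounded, since $p\leq n$, by Hölder's inequality and (ii): they are at most $C\e_1^p\leq C$.

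\textbf{Step 3: integrate in time.} Integrating over $[0,s]$ gives
\[
\int_0^s\int\phi^2\bigl(|\tilde\nabla g|^{p-2}|\tilde\nabla^2 g|^2+|\tilde\nabla g|^{p+2}\bigr)\leq C\int\phi^2|\tilde\nabla g(0)|^p+Cs\leq C(\e_1^p+s).
\]
The time-zero integral is justified by applying the estimate on $[\tau,s]$ and letting $\tau\to0$, using that (ii) holds at every $t$. This gives the second inequality. For the curvature bound we take $p=2$. This gives $\int_0^s\fint_{B_h(x,1)}|\tilde\nabla^2g|^2\leq L(s+\e_1^2)$, and also $\int_0^s\fint|\tilde\nabla g|^4\leq L(s+\e_1^2)$, which controls the $|\tilde\nabla g|^4$ part of $|\Rm|^2$. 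Together these give the first inequality.
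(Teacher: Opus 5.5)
Your proposal is correct and follows essentially the same route as the paper. The paper tests the evolution inequality for $Q^2=|\tilde\nabla g|^2+\sigma^2$ (quoted from \cite{ChuLee2025}) against $\varphi^4Q^{p-2}$, absorbs the critical term $\int\varphi^4Q^p|\tilde\nabla g|^2$ via Hölder with the small $L^n$ norm plus the Sobolev inequality exactly as in your Step 2, integrates in time, and passes to $|\Rm(g)|^2$ with $p=2$ through $\Rm(g)-\Rm(h)=\tilde\nabla^2 g+\tilde\nabla g*\tilde\nabla g$. Your $[\tau,s]$, $\tau\to0$ treatment of the initial term is, if anything, a slightly cleaner justification than the paper's direct use of $g_0$.
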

\begin{proof}
The claim follows implicitly from the proof of \cite[Lemma 2.3]{ChuLee2025}. For the reader's convenience we present the details here. We will use $C_i$ to denote any constants depending only on $n,h,\Lambda$. Let $\varphi$ be a cutoff function on $M$ such that $\varphi\equiv 1$ on $B_h(x,1)$, $\varphi$ vanishes outside $B_h(x,2)$, and it satisfies $|\tilde\nabla\varphi|\leq 10$. If we denote $Q:=\sqrt{|\tilde\nabla g|^2+\sigma^2}$ for $\sigma\to 0^+$, then \cite[(2.13)]{ChuLee2025} implies
\begin{equation}\label{eqn:evo-grad}
\left(\partial_t-g^{ij}\tilde\nabla_i \tilde\nabla_j \right) Q^2 \leq -\Lambda^{-1} (|\tilde\nabla^2 g|^2 +|\tilde\nabla g|^4)+C_1 Q^2|\tilde\nabla g|^2+C_1.
\end{equation}

Fix $p\in [2,n]$. By Stokes' theorem and Young's inequality, we have 
\begin{equation}\label{eqn:evo-L^2}
\begin{split}
&\quad \frac1p \int_M \varphi^4 \partial_t Q^p \,d\mathrm{vol}_h+ \Lambda ^{-1}\int_M \varphi^4 Q^{p-2}(|\tilde\nabla^2 g|^2+|\tilde\nabla g|^4)\,d\mathrm{vol}_h\\
&\leq  C_2 \int_M \varphi^4 \left( Q^{p}|\tilde\nabla g|^2+1\right)\, d\mathrm{vol}_h\\
&\leq C_2 \mathrm{Vol}_h(x,2)+C_2 \left(\int_{B_h(x,2)}|\tilde\nabla g|^n d\mathrm{vol}_h\right)^{\frac2n}\cdot \left(\int_M \varphi^\frac{4n}{n-2}Q^\frac{pn}{n-2} d\mathrm{vol}_h\right)^\frac{n-2}{n}\\
&\leq C_2 \mathrm{Vol}_h(x,2)+C_2\e_1^2|\mathrm{Vol}_h(x,2)|^{\frac2n}\cdot \left(\int_M \varphi^\frac{4n}{n-2}Q^\frac{pn}{n-2} d\mathrm{vol}_h\right)^\frac{n-2}{n},
\end{split}
\end{equation}
where we have used assumption (ii).

Since $h$ has bounded geometry and $\varphi$ is compactly supported on $B_h(x,2)$, by Sobolev inequality we deduce that, as $\sigma\to 0$,
\begin{equation}\label{eqn:Sobo-L2}
\begin{split} 
&\quad \mathrm{Vol}_h(x,2)^{\frac2n} \cdot \left(\int_M \varphi^\frac{4n}{n-2}Q^\frac{pn}{n-2} d\mathrm{vol}_h\right)^\frac{n-2}{n}\\
&\leq C_3\left(\int_M \varphi^4|\tilde\nabla Q^{p/2}|^2+\varphi^2 Q^p d\mathrm{vol}_h\right)\\
&\leq C_4\left(\int_M \varphi^4Q^{p-2}|\tilde\nabla Q|^2+\varphi^2 Q^p d\mathrm{vol}_h\right)\\
&\leq C_4\cdot \left(\int_M \varphi^4Q^{p-2}|\tilde\nabla^2 g|^2d\mathrm{vol}_h +\e_1^p\cdot \mathrm{Vol}_h(x,2) \right),
\end{split}
\end{equation}
where we have used $|\tilde\nabla Q|\leq |\tilde\nabla^2 g|$, Hölder's inequality and assumption (ii) to obtain the last inequality.

By \eqref{eqn:evo-L^2}, \eqref{eqn:Sobo-L2}, and Bishop-Gromov volume comparison, we see that if $\e_1$ is small enough, then after passing $\sigma\to0$ we have 
\begin{equation}\label{eqn:space-time-L2-1}
\begin{split}
&\quad  \int^s_0\fint_{B_h(x,1)} |\tilde\nabla g|^{p-2}( |\tilde\nabla g|^4+|\tilde\nabla^2 g|^2 )\,d\mathrm{vol}_h dt\\
&\leq C_5s + \frac1{\mathrm{Vol}_h(x,1)}\int_{B_h(x,2)} |\tilde\nabla g_0|^p d\mathrm{vol}_h\leq C_5(s+ \e_1^p),
\end{split}
\end{equation}
through integration in $t\in [0,s]$. 

To transfer this estimate to curvature $\Rm$ (regarded as a $(3,1)$-tensor), we note that 
\begin{equation}\label{eqn:curv-from-Dg}
\Rm(g)-\Rm(\tilde g)=\tilde\nabla^2 g+ \tilde\nabla g*\tilde\nabla g.
\end{equation}
Together with \eqref{eqn:space-time-L2-1} for $p=2$, this completes the proof.
\end{proof}

\section{A priori estimates for backward heat equation}\label{sec:heat}

In this section, we consider the backward heat equation from $t=t_0$ to $t\approx  0$, starting from a compactly supported function at $t=t_0$. In the following sections, we will denote by $g(t)$ a solution to Ricci–DeTurck $h$-flow on $M\times (0,T]$, where $T\leq 1$ and fix a point $x_0\in M$. We let $K(x,t;y,s)$ be the (minimal) kernel of the operator $\partial_t-\Delta_{x,t}-\nabla_W$, that is, 
\begin{equation}
    \left\{
    \begin{array}{ll}
      (\partial_t-\Delta_{x,t}-\nabla_W)\, K(x,t;y,s)=0,\\[3pt]
        \lim_{t\to s^+} K(x,t;y,s) =\delta_{y}(x),
    \end{array}
    \right.
\end{equation}
and $W$ is the vector field given in \eqref{eqn:RDF}. The attainment of initial data $\delta_y(x)$ is in the sense of distribution. This is the pull-back of the standard heat kernel from the Ricci flow, i.\,e., $\hat g(t):=\Psi_t^*g(t)$. The following heat kernel estimate is based on the work of Bamler, Cabezas-Rivas, and Wilking \cite{BamlerCabezasWilking2019}.

\begin{prop}\label{prop:heat-kernel}
For any $\Lambda>1$, there exists $C(n,h,\Lambda)>0$ such that the following holds: Suppose that $g(t)$ is a solution to the Ricci–DeTurck $h$-flow on $M\times (0,\hat T]$ such that $h$ has bounded geometry and 
\begin{enumerate}
\item[(i)] $\scal(g(t))\geq -\e t^{-1}$,
\item[(ii)] $\Lambda ^{-1}h\leq g(t)\leq \Lambda h$,
\item[(iii)] $|\tilde\nabla g|^2+|\tilde\nabla^2 g|\leq \Lambda t^{-1}$,
\end{enumerate}
for some $\e\in (0,1)$, then the heat kernel $K(x,t;y,s)$ satisfies 
\begin{equation}
    0< K(x,t;y,s)\leq  \left(\frac{t}{s}\right)^\e \frac{C}{(t-s)^{n/2}} \cdot\exp\left(-\frac{d_{h}^2(x,y)}{C(t-s)} \right)
\end{equation}
for all $x,y\in M$ and $0<s<t\leq \hat T$.
\end{prop}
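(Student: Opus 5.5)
The plan is to reduce the estimate to a Gaussian heat kernel bound for the Ricci flow, following Bamler–Cabezas-Rivas–Wilking. The argument passes through the diffeomorphisms $\Psi_t$ from \eqref{eqn:W-defn}. Fix $s<t$ and pull back by $\Psi$, normalized at some time, to get a Ricci flow $\hat g(\tau)=\Psi_\tau^*g(\tau)$. Under this pullback $K$ becomes the ordinary heat kernel $\hat K$ of $\partial_\tau-\Delta_{\hat g(\tau)}$. Positivity is the strong maximum principle. For the upper bound we must control two things: $\hat K$ in terms of the distance $d_{\hat g}$, and how $\Psi$ distorts $d_h$.

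\textbf{Step 1: on-diagonal bound.} Along the Ricci flow, $\partial_\tau \int \hat K(\cdot,\tau;y,s)\,d\mathrm{vol} = -\int \scal\,\hat K \le \e\tau^{-1}\int \hat K$ by (i). Hence $\int_M \hat K(x,\tau;y,s)\,d\mathrm{vol}_{\hat g(\tau)}(x)\le (\tau/s)^{\e}$, and the same holds for the conjugate (adjoint) kernel. This is the source of the factor $(t/s)^\e$.

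Next, (ii)–(iii) give $|\Rm(g(\tau))|\le C\Lambda\tau^{-1}$ and uniform equivalence with $h$. Since $h$ has bounded geometry, we get a uniform Sobolev inequality for $g(\tau)$ on $h$-balls. With this, a Nash/Moser iteration on $[s,t]$ (the argument of \cite{BamlerCabezasWilking2019}, or equivalently running the iteration directly for $\partial_t-\Delta-\nabla_W$ in the DeTurck gauge) gives
\[
K(x,t;y,s)\le C(t/s)^\e (t-s)^{-n/2}.
\]
Working directly in the DeTurck gauge is convenient, because $|W|\le C|\tilde\nabla g|\le C\Lambda^{1/2}t^{-1/2}$ by (iii). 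This drift is scale-invariant, so it only affects constants in the parabolic iteration on cylinders of size $\sqrt{t-s}$. The volume growth term changes $\int K$ only by factors controlled via $\partial_t d\mathrm{vol}_g=(-\scal+\mathrm{div}W)\,d\mathrm{vol}_g$; the $\mathrm{div}W$ part integrates to zero against the equation.

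\textbf{Step 2: Gaussian factor.} Here I would use Davies' weighted $L^2$ method with weight $e^{\alpha\phi}$, where $\phi$ is a smoothed $h$-distance function with $|\tilde\nabla\phi|\le 1$ and $|\tilde\nabla^2\phi|\le C$ (available from bounded geometry). Because $g\sim h$ by (ii), $|\nabla\phi|_g\le C$. Computing $\frac{d}{dt}\int e^{2\alpha\phi}u^2$ for solutions of $\partial_t u=\Delta u+\nabla_W u$ gives a factor $e^{C\alpha^2(t-s)}$ plus drift terms $\alpha|W|u^2$. These are absorbed using $|W|\lesssim t^{-1/2}$ and Cauchy–Schwarz as $\alpha^2+|W|^2$. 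The dangerous piece is $\int_s^t|W|^2\le C\log(t/s)$. To handle it, I would split into the regimes $t\le 2s$, where it is bounded, and $t>2s$. In the second regime I would apply the semigroup property at time $\max(s,t/2)$ and use the on-diagonal bound together with the mass bound $(t/s)^\e$ on the early portion. Combining the weighted estimate with Step 1 and optimizing in $\alpha\sim d_h(x,y)/(t-s)$ yields the $\exp(-d_h^2/C(t-s))$ factor.

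The main obstacle is this handling of the drift $W$ in the Gaussian estimate. Near $s\to0$ its size $t^{-1/2}$ is only borderline integrable. Naively, the flow lines of $\Psi$ can move an $h$-distance of order $\int|W|\sim\sqrt t$, which is comparable to $\sqrt{t-s}$ and hence harmless. The difficulty is to make this rigorous while keeping the $(t/s)^\e$ dependence sharp, which requires the semigroup splitting just described.
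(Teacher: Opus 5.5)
Your Step 1 is fine: in the unweighted $L^1$ and $L^2$ identities the drift appears only as $\mathrm{div}_g(\cdot\,W)$ and integrates away, so Nash's argument with $\int K\,d\mathrm{vol}\le (t/s)^\e$ gives the on-diagonal bound. The gap is in Step 2, at the point you yourself flag as the main obstacle: your remedy for $t>2s$ cannot produce the Gaussian factor. After writing $K(x,t;y,s)=\int K(x,t;z,t/2)\,K(z,t/2;y,s)\,dz$, decay in $d_h(x,y)$ requires the mass of $K(\cdot,t/2;y,s)$ outside $B_h(y,d_h(x,y)/2)$ to be exponentially small. Neither the on-diagonal bound nor the total mass bound says anything about where that mass sits. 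That tail estimate is the original problem on $[s,t/2]$, where $\int|W|^2$ is still of size $\log(t/s)$. Iterating dyadically does not help either; it only reproduces a factor $(t/s)^C$.

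The missing observation is that the logarithm comes from your Cauchy--Schwarz step. The drift, together with the $\mathrm{div}_g W$ part of $\partial_t d\mathrm{vol}_g$, contributes exactly $-2\alpha\int e^{2\alpha\phi}u^2\langle\nabla\phi,W\rangle$, and you should bound this linearly. Since $2\alpha|\nabla\phi|_g|W|_g\le C\alpha\tau^{-1/2}$, its time integral is at most $2C\alpha(\sqrt t-\sqrt s)\le 2C\alpha\sqrt{t-s}\le \alpha^2(t-s)+C^2$. This is absorbed into $e^{C\alpha^2(t-s)}$, so no splitting is needed, and the only non-Gaussian growth comes from $\scal\ge-\e\tau^{-1}$. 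The same bound works in the twisted $L^p$ estimates used for ultracontractivity.

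This is the rigorous form of your heuristic about flow lines, and it is what the paper uses, in geometric form. The paper works in the Ricci flow gauge and proves $\hat K\le (t/s)^\e\hat G$ by the maximum principle, where $\hat G$ is the unit-mass kernel of $\partial_t-\Delta-\scal$; this is a pointwise version of your mass bound. It then quotes Bamler--Cabezas-Rivas--Wilking for a Gaussian bound on $\hat G$ in $d_{\hat g(s)}$, which needs $|\Rm|\le C/t$ and $\mathrm{inj}\ge\sqrt{t/C}$. Finally it converts to $d_h$ using $|\partial_\tau\Psi_\tau|_h\le C\tau^{-1/2}$, i.e. a displacement of at most $C(\sqrt t-\sqrt s)$. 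Once repaired, your DeTurck-gauge route is self-contained, needing only a Sobolev inequality from bounded geometry instead of the BCW input, but it is longer.
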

\begin{proof}
It will be slightly more convenient to work with the diffeomorphically related Ricci flow solution $\hat g(t):=\Psi_t^* g(t)$ instead. For $0<s<t\leq \hat T$, we consider the standard heat kernel $\hat K(x,t;y,s)$ and the conjugate heat kernel $\hat G(x,t;y,s)$ with respect to $\hat g(t)$. They satisfy 
\begin{equation}
\left\{
\begin{array}{ll}
(\partial_t -\Delta_{\hat g(t)}) \hat G(x,t;y,s)=\scal_{\hat g(t)}\cdot  \hat G(x,t;y,s),\\[3pt]
(\partial_t -\Delta_{\hat g(t)}) \hat K(x,t;y,s)=0.
\end{array}
    \right.
\end{equation}

Since the Ricci flow $\hat g(t)$ is isometric to the Ricci–DeTurck flow, i.\,e., $g(t) = \Psi_t^*g(t)$, from \eqref{eqn:curv-from-Dg} and the assumptions (ii)-(iii), we have 
\begin{equation}\label{eqn:esti-scaling-invariant}
    |\Rm(\hat g(t))|\leq C_1 t^{-1}\quad\text{and}\quad \mathrm{inj}(\hat g(t))\geq \sqrt{C_1^{-1}t}
\end{equation}
on $M\times (0,\hat T]$ for some $C_1(n,h,\Lambda)>0$. For any $s\in (0,\hat T]$, the function 
\begin{equation}
    F(x,t):= \left( \frac{t}{s}\right)^\e\hat G(x,t;y,s)-\hat K(x,t;y,s)
\end{equation}
satisfies 
\begin{equation}
    \begin{split}
        \left(\frac{\partial}{\partial t}-\Delta_{\hat g(t)} \right) F &=\e s^{-1}\left( \frac{t}{s}\right)^{\e-1} \hat G(x,t;y,s)+\left( \frac{t}{s}\right)^{\e} \scal_{\hat g(t)}\cdot \hat G(x,t;y,s)\\
        &\geq \e t^{-1}\left( \frac{t}{s}\right)^{\e} \hat G(x,t;y,s)-\e t^{-1}\left( \frac{t}{s}\right)^{\e}  \hat G(x,t;y,s)\geq 0.
    \end{split}
\end{equation}
Since $\hat g(t)$ has bounded curvature for $t\in (0,\hat T]$ and  $$\lim_{t\to s^+}\hat G(x,t;y,s)=\delta_y(x)=\lim_{t\to s^+}\hat K(x,t;y,s),$$ it follows from the maximum principle that $F(x,t)\geq 0$ on $M\times (s,\hat  T]$. To be more precise, we let $\phi_0$ be an arbitrary smooth non-negative function with compact support and denote 
\begin{equation}
\left\{
\begin{array}{ll}
  \displaystyle \phi_{\hat G}(x,t) = \int_M \hat G(x,t;y,s) \phi_0(y)\,d\mathrm{vol}_{y,s};\\[4mm]
    \displaystyle  \phi_{\hat K}(x,t) = \int_M \hat K(x,t;y,s) \phi_0(y)\,d\mathrm{vol}_{y,s};\\[3mm]
   \displaystyle F_\phi:= \left( \frac{t}{s}\right)^\e \phi_{\hat G}-\phi_{\hat K}.
\end{array}
\right.
\end{equation}
Then the above computation shows that $F_\phi=0$ at $t=s$ and 
\begin{equation}
 \left(\frac{\partial}{\partial t}-\Delta_{\hat g(t)} \right) F_\phi \geq 0.
\end{equation}
It follows from maximum principle that $F_\phi(x,t)\geq 0$ for all $t\in (s,\hat T]$. Since $\phi_0$ was arbitrary, it follows that $F(x,t)\geq 0$ for all $t\in (s,\hat T]$.

Combining this  with \cite[Proposition 3.1]{BamlerCabezasWilking2019} and \eqref{eqn:esti-scaling-invariant}, we see that there exists a constant $C_0(n,h,\Lambda)>0$ (thanks to $\e\leq 1$) such that
\begin{equation}
\hat K(x,t;y,s)\leq \left(\frac{t}{s}\right)^\e \frac{C_0}{(t-s)^{n/2}} \cdot\exp\left(-\frac{d_{\hat g(s)}^2(x,y)}{C_0(t-s)} \right)
\end{equation}
for all $x,y\in M$ and $0<s< t\leq \hat T$. Using the fact that  $\hat g(t)=\Psi_t^*g(t)$, we conclude 
\begin{equation}\label{eqn:K-est-1}
    \begin{split}
K(x,t;y,s)&=\hat K( \Psi_t^{-1}(x),t; \Psi_s^{-1}(y),s)\\
&\leq \left(\frac{t}{s}\right)^\e \frac{C_0}{(t-s)^{n/2}} \cdot\exp\left(-\frac{d_{g(s)}^2(\Psi_s\circ \Psi_t^{-1}(x),y)}{C_0(t-s)} \right)
    \end{split}
\end{equation}
for all $x,y\in M$ and $0<s< t\leq \hat  T$. 

It remains to estimate $d_{g(s)}(\Psi_s\circ \Psi_t^{-1}(x),y)$. By the triangle inequality with respect to $g(s)$, 
\begin{equation}\label{eqn:dist-RDF-tri}
    \begin{split}
        d_{g(s)}(x,y)&\leq d_{g(s)}(\Psi_s\circ \Psi_t^{-1}(x),y)+d_{g(s)}(\Psi_s\circ \Psi_t^{-1}(x),x).
    \end{split}
\end{equation}
Since $g(s)\leq \Lambda h$ and $|\partial_\tau\Psi_\tau|_h\leq C_0(n,\Lambda) \tau^{-1/2}$, if we denote $z:=\Psi_t^{-1}(x)$, then 
\begin{equation}\label{eqn:dist-RDF-RF-compare}
    \begin{split}
        d_{g(s)}(\Psi_s\circ \Psi_t^{-1}(x),x)&\leq \Lambda^{1/2} d_{h}(\Psi_s(z),\Psi_t(z))\\
        &\leq \Lambda^{1/2} \int^{t}_s |\partial_\tau \Psi_\tau(z)|_h \,d\tau \\
        &\leq C_1(n,\Lambda)(\sqrt{t}-\sqrt{s}).
    \end{split}
\end{equation}

By inserting \eqref{eqn:dist-RDF-tri} and \eqref{eqn:dist-RDF-RF-compare} into \eqref{eqn:K-est-1}, we see that we have shown 
\begin{equation}
    K(x,t;y,s)\leq \left(\frac{t}{s}\right)^\e \frac{C_2}{(t-s)^{n/2}} \cdot\exp\left(-\frac{d_{h}^2(x,y)}{C_2(t-s)} \right)
\end{equation}
 for all $x,y\in M$ and $0<s< t\leq \hat  T,$  for some $C_2(n,\Lambda)>0$.
\end{proof}

\begin{rem}
In practice, if $g(t)$, $t\in (0,T]$ is the solution obtained from Theorem~\ref{thm:RDF-exist}, we may satisfy the assumptions of Proposition~\ref{prop:heat-kernel} by reducing $\hat T<T$ as needed.
\end{rem}

Let $f\in C^\infty_c(B_h(x_0,R_0))$ for some $x_0\in M$ and $R_0>0$. Given $t_0\in (0,T]$, we consider the function $u$,
\begin{equation}\label{defn:u}
    u(y,s):=\int_M K(x,t_0;y,s) f(x)\,d\mathrm{vol}_{g(t_0)}(x),
\end{equation}
which satisfies a backward heat-type equation:
\begin{equation}\label{eqn:heat-f-u}
    \left\{
    \begin{array}{ll}
       \partial_s u=-\Delta_{g(s)}u+\nabla_W u+\scal_{g(s)}\cdot u;\\
       u(t_0)=f.
           \end{array}
    \right.
\end{equation}

\begin{lma}\label{lma:esti-u} 
For $\Lambda>1$, there exists $L_0(n,h,\Lambda)>0$ such that the following holds: Suppose that $g(t)$ is a solution to the Ricci–DeTurck $h$-flow on $M\times (0,\hat T]$ satisfying the assumptions of Proposition~\ref{prop:heat-kernel} for some $\e\in (0,1)$, and let $f\in C^\infty_c(B_h(x_0,R_0))$ for some $x_0 \in M$, $R_0>0$, as well as $t_0 \in (0,\hat T]$ be given.
Then the function $u$ defined in \eqref{defn:u} satisfies  
$$ ||u(s)||_{L^\infty}\leq L_0 s^{-\e}||f||_{L^\infty}$$
for all $s\in (0,t_0]$. Furthermore, if $d_h(x_0,y)\geq 2R_0$, then 
 $$\displaystyle  |u(y,s)|\leq L_0 R_0^{-n} s^{-\e} \exp\left(-\frac{d_{h}^2(x_0,y)}{L_0(t_0-s)}\right)\cdot ||f||_{L^\infty}.$$
\end{lma}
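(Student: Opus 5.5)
The plan is to estimate $u$ directly from its kernel representation \eqref{defn:u}, inserting the Gaussian upper bound of Proposition~\ref{prop:heat-kernel} with $t=t_0$. Since $K>0$, we have
\begin{equation}
|u(y,s)|\leq \|f\|_{L^\infty}\int_{B_h(x_0,R_0)} K(x,t_0;y,s)\,d\mathrm{vol}_{g(t_0)}(x).
\end{equation}
Assumption (ii) of Proposition~\ref{prop:heat-kernel} makes $d\mathrm{vol}_{g(t_0)}$ and $d\mathrm{vol}_h$ comparable with constants depending only on $n,\Lambda$, so we may integrate against $d\mathrm{vol}_h$ instead. Applying the kernel bound gives
\begin{equation}
|u(y,s)|\leq C\|f\|_{L^\infty}\Big(\frac{t_0}{s}\Big)^{\e}\,(t_0-s)^{-n/2}\int_{B_h(x_0,R_0)}\exp\Big(-\frac{d_h^2(x,y)}{C(t_0-s)}\Big)d\mathrm{vol}_h(x).
\end{equation}
We use $t_0\leq \hat T\leq T\leq 1$ to replace $(t_0/s)^\e$ by $s^{-\e}$.

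For the global $L^\infty$ bound, we integrate the Gaussian over all of $M$ rather than over $B_h(x_0,R_0)$. Bounded geometry of $h$ gives the volume growth $\mathrm{Vol}_h(B_h(y,r))\leq C e^{Cr}$, and $\mathrm{Vol}_h(B_h(y,r))\leq Cr^n$ for $r\leq 1$. We decompose $M$ into the annuli $\{k\sqrt{t_0-s}\leq d_h(x,y)<(k+1)\sqrt{t_0-s}\}$ and sum. Because $t_0-s\leq 1$, the exponential volume growth is dominated by the Gaussian factor $e^{-k^2/C}$. This yields
\begin{equation}
\int_M (t_0-s)^{-n/2}e^{-d_h^2/(C(t_0-s))}\,d\mathrm{vol}_h\leq C',
\end{equation}
and hence $\|u(s)\|_{L^\infty}\leq L_0 s^{-\e}\|f\|_{L^\infty}$.

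For the off-diagonal bound, suppose $d_h(x_0,y)\geq 2R_0$. For every $x\in B_h(x_0,R_0)$ the triangle inequality gives $d_h(x,y)\geq \tfrac12 d_h(x_0,y)$. We split the exponential as
\begin{equation}
e^{-d_h^2(x,y)/(C(t_0-s))}\leq e^{-d_h^2(x_0,y)/(8C(t_0-s))}\cdot e^{-d_h^2(x_0,y)/(8C(t_0-s))}.
\end{equation}
The first factor supplies the claimed Gaussian decay. The second factor absorbs the prefactor $(t_0-s)^{-n/2}$ together with the volume $\mathrm{Vol}_h(B_h(x_0,R_0))$. To see this, set $\rho:=d_h(x_0,y)\geq 2R_0$ and use $\sup_{\tau>0}\tau^{-n/2}e^{-\rho^2/(8C\tau)}\leq C\rho^{-n}\leq CR_0^{-n}$. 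This handles the factor $(t_0-s)^{-n/2}$, and it is exactly where the $R_0^{-n}$ in the statement comes from. The ball volume then only needs to be bounded by a constant: for $R_0\leq 1$ this follows from bounded geometry.

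The main obstacle is the case $R_0$ large, where $\mathrm{Vol}_h(B_h(x_0,R_0))$ is not uniformly bounded. There we would instead bound the integral over $B_h(x_0,R_0)$ by the integral over the region $\{d_h(x,y)\geq \rho/2\}$. The annulus summation from the global step bounds this by a constant, while the factor $e^{-\rho^2/(8C(t_0-s))}$ is retained, so no ball volume appears. What remains is to confirm that the $R_0^{-n}$ factor is compatible in this regime. Once this is settled, we enlarge $L_0$ to cover all the constants, which depend only on $n,h,\Lambda$.
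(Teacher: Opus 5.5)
Your global $L^\infty$ bound is correct and is essentially the paper's argument. You insert the Gaussian bound of Proposition~\ref{prop:heat-kernel}, use $t_0\le 1$, and integrate the Gaussian over all of $M$ using exponential volume growth together with $t_0-s\le 1$. You sum over annuli of width $\sqrt{t_0-s}$, whereas the paper rescales to $(t_0-s)^{-1}h$ and uses the co-area formula; the difference is cosmetic.

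The off-diagonal bound, however, is unfinished. You explicitly defer the case of large $R_0$ (``what remains is to confirm that the $R_0^{-n}$ factor is compatible in this regime''), so as written the second estimate is only established for $R_0\le 1$. Your fallback for large $R_0$ integrates over $\{d_h(x,y)\ge \rho/2\}$. That only yields $C s^{-\varepsilon}\|f\|_{L^\infty}e^{-\rho^2/(8C(t_0-s))}$, with no factor $R_0^{-n}$. In this regime $R_0^{-n}$ is small, so you genuinely need extra decay that you have not yet extracted.

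The missing observation is one you already used in the global step. Since $t_0-s\le 1$, any portion of the Gaussian can be made $s$-independent: $e^{-c\rho^2/(t_0-s)}\le e^{-c\rho^2}\le e^{-4cR_0^2}$. This beats both the volume growth $\mathrm{Vol}_h(B_h(x_0,R_0))\le Ce^{CR_0}$ and any power of $R_0$.

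So split $e^{-\rho^2/(4C(t_0-s))}$ into three equal factors instead of two:
\begin{itemize}
\item the first gives the claimed decay;
\item the second absorbs $(t_0-s)^{-n/2}$ into $C\rho^{-n}\le CR_0^{-n}$;
\item the third, by $t_0-s\le 1$ and $\rho\ge 2R_0$, is at most $e^{-R_0^2/(3C)}$, which kills $e^{CR_0}$ uniformly in $R_0>0$.
\end{itemize}

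This is exactly the paper's chain. It bounds the ball volume by $e^{C_5R_0}$ and uses $\rho\ge 2R_0$ to peel $e^{-2R_0^2/(C_4(t_0-s))}$ off half of the Gaussian. That single factor both produces $R_0^{-n}$ and, because $t_0-s\le 1$, absorbs $e^{C_5R_0}$.

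Your fallback route can be closed the same way. Write $e^{-\rho^2/(8C(t_0-s))}\le e^{-\rho^2/(16C(t_0-s))}e^{-\rho^2/(16C)}$ and note $e^{-\rho^2/(16C)}\le C'\rho^{-n}\le C'R_0^{-n}$. With this one line added, no case distinction in $R_0$ is needed and the proof is complete.
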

\begin{rem}
The first claim of the lemma in the setting that $M$ is closed and $\hat T \leq 2$ was proved in Theorem 5.3 of \cite{LitzingerSimon2025} using the maximum principle directly. 
\end{rem}

\begin{proof}
In what follows, we will use $C_i$ to denote any constants depending only on $n,h,\Lambda$. 

Since $g(t_0)\leq \Lambda h,$ $t_0\leq 1$, and $f$ is compactly supported on $B_h(x_0,R_0)$ for some large $R_0>1$,  Proposition~\ref{prop:heat-kernel} tells us that 
\begin{equation} \label{eqn:formula-u}
    \begin{split}
        |u(y,s)| &\leq \left(\frac{t_0}{s}\right)^\e \frac{C_1}{(t_0-s)^{n/2}} \int_M  |f(x)|\, \exp\left(-\frac{d_{h}^2(x,y)}{C_1(t_0-s)}\right) d\mathrm{vol}_h(x)\\
        &\leq \frac{C_1 ||f||_{L^\infty}s^{-\e}}{(t_0-s)^{n/2}}\int_{B_h(x_0,R_0)} \exp\left(-\frac{d_{h}^2(x,y)}{C_1(t_0-s)}\right) d\mathrm{vol}_h(x)
    \end{split}
\end{equation}

To estimate the right hand side in \eqref{eqn:formula-u}, it is more convenient to consider the rescaled metric $\tilde h:= (t_0-s)^{-1}h$ where $t_0-s$ is  less than $1$. By the Bishop-Gromov volume comparison and co-area formula, we conclude that
\begin{equation}
\begin{split}
   |u(y,s)|&\leq \frac{C_1  ||f||_{L^\infty}s^{-\e}}{(t_0-s)^{n/2}} \int_{B_h(x_0,R_0)} \exp\left(-\frac{d_{h}^2(x,y)}{C_1(t_0-s)}\right) d\mathrm{vol}_h(x)\\
    &\leq     C_1 ||f||_{L^\infty}s^{-\e} \cdot \int_M \exp\left(-\frac{d_{\tilde h}^2(x,y)}{C_1}\right) d\mathrm{vol}_{\tilde h}(x)\\
       &=    C_1 ||f||_{L^\infty}s^{-\e}\cdot  \int_0^\infty \exp\left(-\frac{r^2}{C_1}\right) \cdot |\partial B_{\tilde h}(y,r)|\cdot dr\\
    &\leq  C_1 ||f||_{L^\infty}s^{-\e}\cdot \int_0^\infty \exp\left( -\frac{r^2}{C_1}+C_2r\right) dr\leq C_3 ||f||_{L^\infty}s^{-\e}.
\end{split}
\end{equation}
This proves the first inequality in the Lemma.

If $d_h(x_0,y)>2R_0$, then \eqref{eqn:formula-u} can be reduced to
\begin{equation} 
    \begin{split}
        |u(y,s) |
        &\leq \frac{C_4||f||_{L^\infty}s^{-\e}}{(t_0-s)^{n/2}} \mathrm{Vol}_h\left(B_h(x_0,R_0)\right)\cdot  \exp\left(-\frac{d_{h}^2(x_0,y)}{C_4(t_0-s)}\right)\\
        &\leq \frac{C_4||f||_{L^\infty}s^{-\e}}{(t_0-s)^{n/2}}\cdot  \exp\left(-\frac{d_{h}^2(x_0,y)}{2C_4(t_0-s)}-\frac{2R_0^2}{C_4(t_0-s)}+C_5R_0\right)\\
        &\leq \frac{C_5||f||_{L^\infty}s^{-\e}}{R_0^n}\cdot  \exp\left(-\frac{d_{h}^2(x_0,y)}{2C_4(t_0-s)} \right),
    \end{split}
\end{equation}
finishing the proof.

\end{proof}

Using the decay rate from Lemma~\ref{lma:esti-u} and the curvature bound for $g(t)$, we see that the integral of the scalar curvature, weighted with a solution to the backwards heat-type equation defined in \eqref{defn:u}, is monotone.

\begin{lma}\label{lma:dist-monot}
Suppose that $g(t)$ is a solution to the Ricci–DeTurck $h$-flow on $M\times (0,\hat T]$ satisfying the assumptions of Proposition~\ref{prop:heat-kernel} for some $\e\in (0,1)$ and let $t_0 \in (0,\hat T]$ be given. Then for all $t\in (0,t_0]$ and $\kappa\in \mathbb{R}$, the function $(\scal_{g(t)}-\kappa)\cdot u(t)$ is integrable on $M$, where $u$ is defined in \eqref{defn:u}. Furthermore, if $f\geq 0$, then
\begin{equation}
    \int_M (\scal_{g(t)}-\kappa)\cdot u(t) \,d\mathrm{vol}_{g(t)}
\end{equation}
is monotonically non-decreasing in $t$.
\end{lma}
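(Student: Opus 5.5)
Integrability comes first. For each fixed $t\in(0,t_0]$, assumption (iii) of Proposition~\ref{prop:heat-kernel} together with \eqref{eqn:curv-from-Dg} gives $|\scal_{g(t)}|\leq C t^{-1}$ on all of $M$. Lemma~\ref{lma:esti-u} then supplies $|u(y,t)|\leq L_0 t^{-\e}\|f\|_{L^\infty}$ everywhere, and for $d_h(x_0,y)\geq 2R_0$ a Gaussian bound $\exp(-d_h^2(x_0,y)/L_0(t_0-t))$. Since $h$ has bounded geometry, Bishop--Gromov gives $\mathrm{Vol}_h(B_h(x_0,r))\leq Ce^{Cr}$, so this Gaussian is integrable against $d\mathrm{vol}_h$, and $d\mathrm{vol}_{g(t)}$ is comparable to $d\mathrm{vol}_h$ by (ii). Hence $(\scal_{g(t)}-\kappa)u(t)\in L^1(M)$ for $t<t_0$. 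At $t=t_0$ we have $u=f$, which is compactly supported, so integrability is trivial there.

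For monotonicity I will work with the Ricci flow $\hat g(t)=\Psi_t^*g(t)$ and set $\hat u(t)=\Psi_t^*u(t)$. The integral is invariant under this pullback, so
\[
I(t):=\int_M(\scal_{g(t)}-\kappa)u\,d\mathrm{vol}_{g(t)}=\int_M(\hat R-\kappa)\hat u\,d\mathrm{vol}_{\hat g(t)}.
\]
Under the pullback, \eqref{eqn:heat-f-u} becomes the conjugate heat equation $\partial_t\hat u=-\Delta\hat u+\hat R\hat u$, while $\partial_t d\mathrm{vol}=-\hat R\,d\mathrm{vol}$. Combined with $\partial_t\hat R=\Delta\hat R+2|\Ric|^2$, this yields formally
\[
\frac{d}{dt}I=\int_M\Big[(\Delta\hat R+2|\Ric|^2)\hat u+(\hat R-\kappa)(-\Delta\hat u+\hat R\hat u)-(\hat R-\kappa)\hat R\hat u\Big]d\mathrm{vol}
=\int_M 2|\Ric|^2\hat u\,d\mathrm{vol}+\int_M\big(\hat u\Delta\hat R-\hat R\Delta\hat u\big)\,d\mathrm{vol}.
\]
The $\kappa$ terms cancel because $\int\kappa\hat u\,d\mathrm{vol}$ is conserved, which also follows from the computation. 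The last integral vanishes after integration by parts. Since $f\geq0$ and $K>0$ we have $\hat u\geq0$, so $\frac{d}{dt}I\geq0$.

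The main obstacle is justifying the integration by parts and the differentiation under the integral on a noncompact $M$. I would insert a cutoff $\phi_R$ with $\phi_R\equiv1$ on $B_h(x_0,R)$ and $|\tilde\nabla\phi_R|+|\tilde\nabla^2\phi_R|\leq C$, working directly in Ricci--DeTurck form where the $W$-drift terms combine into a divergence. I would then integrate $\frac{d}{dt}\int\phi_R(\scal-\kappa)u$ over $[t_1,t_2]\subset(0,t_0]$. The boundary error terms are bounded using (iii): derivatives of $\scal$ and $|W|$ are controlled by powers of $t_1^{-1}$, which follows from Theorem~\ref{thm:RDF-exist}(c)-type bounds, or from Shi's estimates applied on the fixed interval $[t_1/2,t_0]$. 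Gradient bounds on $u$ come from the same Gaussian via the parabolic interior estimates for \eqref{eqn:heat-f-u}, which has bounded coefficients on $[t_1,t_0]$. Each error term is therefore bounded by $C(t_1)\int_{M\setminus B_h(x_0,R)}e^{-d_h^2/C}\,d\mathrm{vol}_h\to0$ as $R\to\infty$. The term $2|\Ric|^2\phi_R u$ stays nonnegative throughout, so letting $R\to\infty$ gives $I(t_2)\geq I(t_1)$. Finally, at the endpoint $t_2=t_0$, continuity of $I$ holds because $u(t)\to f$ locally smoothly while the tails are uniformly Gaussian.
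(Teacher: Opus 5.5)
Your proposal is correct and is essentially the paper's argument. It uses the same evolution identity $\frac{d}{dt}\int_M(\scal_{g(t)}-\kappa)u\,\varphi_R\,d\mathrm{vol}_{g(t)}=\int_M 2|\Ric|^2u\varphi_R\,d\mathrm{vol}_{g(t)}+(\text{cutoff errors})$, integrates it in time, and lets $R\to\infty$ by dominated convergence, with integrability coming from Lemma~\ref{lma:esti-u} and $|\Rm(g(t))|\le Ct^{-1}$. The only difference is how the cutoff errors are controlled: the paper uses $\varphi_R=\phi(\rho/R)$ with $|\tilde\nabla\varphi_R|\lesssim R^{-1}$, the integrability of $u$, and an integration by parts in the $\langle\nabla u,\nabla\varphi_R\rangle$ term using $|\nabla\Rm|\le Ct^{-3/2}$, whereas you use fixed-scale cutoffs with Gaussian decay of $u$ and of $\nabla u$ (from local parabolic estimates, valid up to $s=t_0$ away from $\operatorname{supp} f$), which works equally well.
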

\begin{proof}

The integrability of $(\scal_{g(t)}-\kappa)\cdot u(t)$ for $t\in (0,t_0]$ follows directly from the decay estimate of $u$ in Lemma~\ref{lma:esti-u} and the curvature bound, $|\Rm(g(t))|\leq c/t$. It remains to show the monotonicity. When $M$ is compact, this follows from \cite[Proposition 4.1]{JiangShengZhang2023} since $g(t)$ is related  to the Ricci flow by $\Psi_t^*g(t)=\hat g(t),$ where $\Psi$ is a solution to equation \eqref{eqn:W-defn}. We now extend this result to the non-compact case. Let $t_0>s_1>s_2>0$ and $\varphi_R$ be a smooth time-independent cutoff function on $M$ which will be chosen later.

Using equation \eqref{eqn:heat-f-u}, if we denote the operator 
\begin{equation}\label{eqn:label-L}
    \mathfrak{L}:=\partial_t-\Delta_{g(t)}-\nabla_W,
\end{equation}
then $\mathfrak{L}^*=\partial_t+\Delta_{g(t)}-\nabla_W-\scal_{g(t)}\cdot, $ and hence,
\begin{equation}
    \begin{split}
 &\quad  \frac{d}{dt} \int_M  (\scal_{g(t)}-\kappa)\cdot u \varphi_R\,d\mathrm{vol}_{g(t)}\\
      &=\int_M \mathfrak{L}(\scal_{g(t)}-\kappa)\cdot u\varphi_R+ (\scal_{g(t)}-\kappa) \cdot \mathfrak{L}^* (u\varphi_R) \,d\mathrm{vol}_{g(t)}\\
      &=\int_M 2|\Ric(g(t))|^2 u\varphi+ (\scal_{g(t)}-\kappa) \varphi \cdot \mathfrak{L}^*(u)\, d\mathrm{vol}_{g(t)}\\
      &\quad +\int_M  (\scal_{g(t)}-\kappa)\left( u\Delta \varphi_R+ 2\langle \nabla u,\nabla \varphi_R\rangle-u\nabla_W \varphi_R \right) d\mathrm{vol}_{g(t)}\\
      &\geq \int_M  (\scal_{g(t)}-\kappa)\left( u\Delta \varphi_R+ 2\langle \nabla u,\nabla \varphi_R\rangle-u\nabla_W \varphi_R \right) d\mathrm{vol}_{g(t)}\\
      &=:\mathbf{I}+\mathbf{II}+\mathbf{III}.
    \end{split}
\end{equation}

We now construct $\varphi_R$ so that $\varphi_R\to 1$ and $\mathbf{I}, \mathbf{II}$ and $\mathbf{III}$ converge to $0$ as $R\to+\infty$. Fix $x_0\in M$. We let $\rho$ be a smooth proper function on $M$ such that $|\tilde \nabla\rho|^2+|\tilde\nabla^2 \rho|\leq 10$ and $\rho(x)\geq d_h(x_0,x)$, which exists in view of \cite{Tam2010} and  the fact that  $h$ has bounded curvature. Let $\phi$ be a smooth function such that $\phi\equiv 1$ on $[0,1]$, $\phi$ vanishes outside $[0,2]$, and it satisfies $|\phi'|^2+|\phi''|\leq 10^3$.  We let 
\begin{equation}\label{eqn:smooth-cutoff}
    \varphi_R(x):=\phi\left( \frac{\rho(x)}{R}\right),
\end{equation}
which satisfies 
\begin{equation}
    \Delta_{g(t)} \varphi_R=R^{-1}\phi' \Delta_{g(t)} \rho+R^{-2}(\phi')^2|\nabla \rho|^2.
\end{equation}

By writing 
\begin{equation}
    \tilde \nabla_i \tilde\nabla_j \rho=\nabla_i \nabla_j \rho+ (\Gamma_{ij}^k -\tilde \Gamma_{ij}^k) \rho_k
\end{equation}
and using $|\tilde \nabla g|\leq \e t^{-1/2}$ and $\Lambda^{-1}h\leq g(t)\leq \Lambda h$, we have 
\begin{equation}
    \begin{split}
        |\mathbf{I}|\leq C(s_1,s_2,h,n)R^{-2} \int_M u\,d\mathrm{vol}_{g(t)}\to 0
    \end{split}
\end{equation}
as $R\to+\infty$, by Lemma~\ref{lma:esti-u}. The bound of $\mathbf{III}$ is similar. For $\mathbf{II}$, we use  Stokes' theorem so that
\begin{equation}
    \begin{split}
        \mathbf{II}&=-2\int_M u\cdot\mathrm{div}_{g(t)}\left( (\scal_{g(t)}-\kappa)\nabla \varphi\right) d\mathrm{vol}_{g(t)},
    \end{split}
\end{equation}
which also converges to $0$ by $|\nabla \Rm(g(t))|\leq Ct^{-3/2}$ as $R\to+\infty$. Hence, 
\begin{equation}
    \begin{split}
        \int_M  (\scal_{g(t)}-\kappa)\cdot u \varphi_R\,d\mathrm{vol}_{g(t)}\Big|_{t=s_2}^{t=s_1}\geq \int^{s_2}_{s_1}\left(\mathbf{I}+\mathbf{II}+\mathbf{III}\right)\,  ds =o(1)
    \end{split}
\end{equation}
as $R\to+\infty$. The monotonicity now follows from taking $R\to+\infty$ and using the dominated convergence theorem.
\end{proof}

In the following, we show that estimates for $u$ that are uniform in time hold in the $L^p$-setting, complementing the $L^{\infty}$-estimates for $u$ already obtained in Lemma~\ref{lma:esti-u}. 

\begin{prop}\label{prop:L1-u}
Suppose that $g(t)$ is a solution to the Ricci–DeTurck $h$-flow satisfying the assumptions of Proposition~\ref{prop:heat-kernel} for some $\e\in (0,1)$ and let $t_0 \in (0,\hat T]$ be given. Then the function $u$ defined in \eqref{defn:u} satisfies 
$$||u||_{L^1(M,g(t))}=||f||_{L^1(M,g(t_0))},$$
for all $t\in (0,t_0]$.
\end{prop}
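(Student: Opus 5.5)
The plan is to show that the total integral $\int_M u(t)\,d\mathrm{vol}_{g(t)}$ does not depend on $t$. Throughout I take $f\geq 0$, which is the case used later. Positivity of the kernel, $K>0$ (Proposition~\ref{prop:heat-kernel}), then gives $u\geq 0$, so $||u(t)||_{L^1(M,g(t))}=\int_M u(t)\,d\mathrm{vol}_{g(t)}$. For signed $f$ the argument below still shows that $\int_M u\,d\mathrm{vol}_{g(t)}=\int_M f\,d\mathrm{vol}_{g(t_0)}$; this is the conserved quantity, and it coincides with the $L^1$ norms when $f\geq0$. I do not expect equality of the $L^1$ norms themselves to survive sign changes of $f$.

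\textbf{Step 1: the formal computation.} Taking half the $g(t)$-trace of \eqref{eqn:RDF-orig} gives $\partial_t\, d\mathrm{vol}_{g(t)}=(-\scal_{g(t)}+\mathrm{div}_{g(t)}W)\,d\mathrm{vol}_{g(t)}$. Combining this with \eqref{eqn:heat-f-u}, the scalar curvature terms cancel:
\begin{equation}
\partial_t\big(u\, d\mathrm{vol}_{g(t)}\big)=\big(-\Delta_{g(t)}u+\langle W,\nabla u\rangle+u\,\mathrm{div}_{g(t)}W\big)\,d\mathrm{vol}_{g(t)}=\big(-\Delta_{g(t)}u+\mathrm{div}_{g(t)}(uW)\big)\,d\mathrm{vol}_{g(t)}.
\end{equation}
This is a pure divergence, so formally $\frac{d}{dt}\int_M u\,d\mathrm{vol}_{g(t)}=0$.

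\textbf{Step 2: justification with cutoffs.} As in the proof of Lemma~\ref{lma:dist-monot}, I multiply by the cutoff $\varphi_R$ from \eqref{eqn:smooth-cutoff}. Integrating by parts moves all derivatives onto $\varphi_R$:
\begin{equation}
\frac{d}{dt}\int_M u\varphi_R\,d\mathrm{vol}_{g(t)}=-\int_M u\,\Delta_{g(t)}\varphi_R\,d\mathrm{vol}_{g(t)}-\int_M u\,\langle W,\nabla\varphi_R\rangle\,d\mathrm{vol}_{g(t)}.
\end{equation}
I then integrate over $[t,s]$ with $0<t<s<t_0$. The two terms are controlled as follows.
\begin{itemize}
\item On the support of $\nabla\varphi_R$, the Christoffel-difference argument from Lemma~\ref{lma:dist-monot} gives $|\Delta_{g}\varphi_R|\leq C(t)R^{-1}$.
\item By assumption (iii) of Proposition~\ref{prop:heat-kernel}, $|W|\leq C|\tilde\nabla g|\leq C\tau^{-1/2}$, so the second term is bounded by $C\tau^{-1/2}R^{-1}$ on $\mathrm{supp}\,\nabla\varphi_R$.
\end{itemize}
By the Gaussian decay in Lemma~\ref{lma:esti-u}, $\int_{\{\rho\geq R\}}|u|\,d\mathrm{vol}\to 0$ as $R\to\infty$, uniformly for $\tau\in[t,s]$. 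Hence both error terms vanish as $R\to\infty$. The uniform $L^1$ bound from Lemma~\ref{lma:esti-u} then lets dominated convergence pass to the limit, giving $\int_M u(t)\,d\mathrm{vol}_{g(t)}=\int_M u(s)\,d\mathrm{vol}_{g(s)}$.

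\textbf{Step 3 (expected main obstacle): the limit $s\to t_0^-$.} Here one needs $\int_M u(s)\,d\mathrm{vol}_{g(s)}\to\int_M f\,d\mathrm{vol}_{g(t_0)}$. On $[t_0/2,t_0]$ the flow is smooth with bounded geometry, and $f$ is smooth with compact support. Standard parabolic theory for the adjoint equation \eqref{eqn:heat-f-u} then gives $u(s)\to f$ locally uniformly as $s\to t_0$. The Gaussian tail bound of Lemma~\ref{lma:esti-u}, which is uniform for $s$ near $t_0$ because there $s^{-\e}$ is bounded, gives tightness, so the convergence upgrades to $L^1$. Since $g(s)\to g(t_0)$ smoothly, the volume forms converge as well, and this completes the argument.
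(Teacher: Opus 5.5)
Your proposal is correct and follows essentially the same route as the paper: differentiate $\int_M u\varphi_R\,d\mathrm{vol}_{g(t)}$ using the cutoff from \eqref{eqn:smooth-cutoff}, move all derivatives onto $\varphi_R$ to obtain an $O(R^{-1})$ error on $[t_1,t_2]$, and let $R\to\infty$ using the decay from Lemma~\ref{lma:esti-u}. Your two additions are correct refinements of points the paper leaves implicit. First, the conserved quantity is $\int_M u\,d\mathrm{vol}_{g(t)}=\int_M f\,d\mathrm{vol}_{g(t_0)}$, which coincides with the stated $L^1$ identity only for $f\geq 0$, the case the paper uses and silently assumes in its proof. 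Second, you justify the limit $s\to t_0^-$ separately, whereas the paper takes it for granted by integrating directly up to $t_2=t_0$.
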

\begin{proof}
By Lemma~\ref{lma:esti-u}, $u$ is integrable. We let $\varphi_R$ be the cutoff from \eqref{eqn:smooth-cutoff}. It follows from \eqref{eqn:heat-f-u} and Stokes' theorem that 
\begin{equation}
\begin{split}
\frac{d}{dt} \int_M  u \varphi_R  \,d\mathrm{vol}_{g(t)}&=\int_M \varphi_R\left(-\Delta_{g(t)}u +\nabla_W u+\mathrm{div}_{g(t)}W \cdot u\right) d\mathrm{vol}_{g(t)} \\
&=-\int_M   \left(\Delta_{g(t)} \varphi_R  +\langle W,\nabla \varphi_R \rangle  \right)u \,d\mathrm{vol}_{g(t)}.
\end{split}
\end{equation}

For any $0<t_1<t_2\leq t_0$, we have $|\Delta_{g(t)} \varphi_R  +\langle W,\nabla \varphi_R \rangle |\leq C(n,h,t_1)R^{-1}$ as $R\to +\infty$ thanks to the construction of $\varphi_R$. By integrating it from $t=t_1$ to $t=t_2$, followed by letting $R\to+\infty$, we conclude that $||u||_{L^1(M,g(t))}$ is constant in $t\in (0,t_0]$. 
\end{proof}

We now prove $L^p$-estimates for $u$ which generalize the one obtained in Proposition \ref{prop:L1-u}. 

\begin{prop}\label{prop:Lp-u}
For $\Lambda>1$, there exists $\e_1(n,h,\Lambda)\in (0,1)$ such that the following holds: Suppose that $g(t)$ is a solution to the Ricci–DeTurck $h$-flow on $M\times (0,\hat T]$ satisfying the assumptions of Proposition~\ref{prop:heat-kernel} for some $\e\in (0,1)$, let $t_0 \in (0,\hat T]$ be given, and assume in addition that for some $x\in M$ we have 
\begin{equation}\label{eqn:conc-assump}
\left(\fint_{B_h(x,2)}|\tilde\nabla g(t)|^n d\mathrm{vol}_h \right)^\frac1n\leq \e_1
\end{equation}
for all $t\in (0,\hat T]$.
Then, for any $1<p\leq (4\e)^{-1}$, there exists $L(n,h,\Lambda,p)>0$ such that the function $u$ in \eqref{defn:u} satisfies 
\begin{equation}
\int_{B_h(x,1)}u^p d\mathrm{vol}_{g(t)}\leq  \int_{B_h(x,2)}f^p d\mathrm{vol}_{g(t_0)}+L ||f||_{L^\infty}^p
\end{equation}
for all $t\in  (0,t_0]$. Furthermore, if $d_h(x,x_0)\geq 4R_0$ then we have
\begin{equation}
\int_{B_h(x,1)}u^p d\mathrm{vol}_{g(t)}\leq L  \exp\left(-\frac{d_{h}^2(x_0,x)}{L(t_0-t)}\right)\cdot ||f||_{L^\infty}^p
\end{equation}
for all $t\in (0,t_0]$.
\end{prop}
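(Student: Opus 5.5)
The plan is a localized $L^p$ energy estimate for $u$: differentiate a cutoff-weighted $L^p$ integral of $u$ in time, integrate backwards from $t_0$ to $t$, and control the scalar curvature term by integrating by parts onto $u^p$ and using the space-time bounds of Lemma~\ref{lma:spacetime-L^2}.

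\textbf{Evolution identity.} Note first that $u\ge 0$ whenever $f\ge 0$ (the kernel is positive). In general one can work with $|u|^p$, regularized as $(u^2+\sigma^2)^{p/2}$, or treat $f^\pm$ separately; for simplicity I write $u\ge 0$. Let $\varphi$ be a cutoff with $\varphi\equiv1$ on $B_h(x,1)$, supported in $B_h(x,2)$, with $|\tilde\nabla\varphi|\le 10$. I use $\varphi^2$ as the weight so that $|\nabla\varphi^2|^2/\varphi^2$ stays bounded. From \eqref{eqn:heat-f-u} and $\partial_s d\mathrm{vol}_{g(s)}=(-\scal+\mathrm{div}\,W)\,d\mathrm{vol}_{g(s)}$, integrating by parts (the $W$-terms combine into a divergence except for a cutoff term), I expect
\begin{equation*}
\frac{d}{ds}\int_M\varphi^2u^p
= p(p-1)\int_M\varphi^2u^{p-2}|\nabla u|^2+p\int_M u^{p-1}\langle\nabla\varphi^2,\nabla u\rangle-\int_M u^p\langle W,\nabla\varphi^2\rangle+(p-1)\int_M\varphi^2\scal\, u^p .
\end{equation*}
Since
\begin{equation*}
\int\varphi^2u^p(t)=\int\varphi^2f^p(t_0)-\int_t^{t_0}\frac{d}{ds}\int\varphi^2u^p\,ds,
\end{equation*}
it suffices to bound the derivative from below by minus an integrable-in-time quantity of size $L\|f\|_\infty^p$. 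The first term is a good term. The cross term is absorbed into half of the good term, at the cost of $C\int_{B_h(x,2)}u^p$. Using $u\le L_0s^{-\e}\|f\|_\infty$ from Lemma~\ref{lma:esti-u} and $p\e\le 1/4$, this cost is $\le Cs^{-1/4}\|f\|_\infty^p$, which is integrable in time.

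\textbf{Main obstacle: the scalar curvature term.} The pointwise bound $\scal\ge-\e/t$ is useless here, since it yields $s^{-1-p\e}$, which is not integrable. Instead I use \eqref{eqn:curv-from-Dg} in divergence form, $\scal=\tilde\nabla(g^{-1}*\tilde\nabla g)+\tilde\nabla g*\tilde\nabla g+O(1)$, and integrate the second-derivative part by parts onto $\varphi^2u^p$. This gives
\begin{equation*}
\Bigl|\int\varphi^2\scal\,u^p\Bigr|\le C\int_{B_h(x,2)}\bigl(|\tilde\nabla g|\,p\,u^{p-1}|\nabla u|\varphi^2+(|\tilde\nabla g|^2+|\tilde\nabla g|+1)u^p\bigr).
\end{equation*}
Young's inequality absorbs the gradient part into the good term, leaving $C\int_{B_h(x,2)}(1+|\tilde\nabla g|^2)u^p$. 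The term $\langle W,\nabla\varphi^2\rangle u^p$ is of the same type, since $|W|\le C|\tilde\nabla g|$. Inserting $u^p\le L s^{-1/4}\|f\|_\infty^p$ and applying Hölder in space-time,
\begin{equation*}
\int_0^{t_0}\!\!\int_{B_h(x,2)}|\tilde\nabla g|^2s^{-1/4}\le\Bigl(\int_0^{t_0}\!\!\int_{B_h(x,2)}|\tilde\nabla g|^4\Bigr)^{1/2}\Bigl(\int_0^{t_0}\!\!\int_{B_h(x,2)}s^{-1/2}\Bigr)^{1/2},
\end{equation*}
which is bounded by Lemma~\ref{lma:spacetime-L^2} with $p=2$ together with $t_0\le1$. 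This is exactly where $p\le(4\e)^{-1}$ enters.

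\textbf{Scale and the far-away case.} Lemma~\ref{lma:spacetime-L^2} gives control on $B_h(x,1)$ from smallness on $B_h(x,2)$. I therefore run the cutoff argument between radii $1$ and $2$ at a smaller scale, or cover $B_h(x,2)$ by boundedly many small balls using bounded geometry, shrinking $\e_1$ accordingly. This is routine but needs care.

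For $d_h(x,x_0)\ge4R_0$, $f$ vanishes on $B_h(x,2)$, so the boundary term at $t_0$ is absent. Every point $y\in B_h(x,2)$ satisfies $d_h(y,x_0)\ge 2R_0$ and $d_h(y,x_0)\ge\tfrac12 d_h(x,x_0)$. The second estimate of Lemma~\ref{lma:esti-u} then gives
\begin{equation*}
u^p\le Ls^{-1/4}\exp\bigl(-d_h^2(x_0,x)/(L(t_0-s))\bigr)\|f\|_\infty^p,
\end{equation*}
and for $s\in[t,t_0]$ this is at most the same bound with $t_0-s$ replaced by $t_0-t$. The same estimates as above then carry the Gaussian factor through and yield the second inequality.
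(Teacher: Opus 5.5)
Your proof is correct. It differs from the paper's in how the scalar curvature term is handled.

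\textbf{The paper's route.} It works with the weight $\varphi$ and discards the good term $p(p-1)\int\varphi u^{p-2}|\nabla u|^2$. It then bounds $(p-1)\int\varphi\,\scal\,u^p$ from below by $-C L_1^p t^{-p\e}\int_{B_h(x,2)}|\Rm(g(t))|$, using only the pointwise bound $u\le L_1 t^{-\e}$. The time integral is controlled via $s^{-p\e}|\Rm|\le |\Rm|^2+s^{-2p\e}$ and the space-time $L^2$ curvature bound of Lemma~\ref{lma:spacetime-L^2}. That lemma is a genuinely second-order parabolic input, and the step needs $p\e<1/2$.

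\textbf{Your route.} You integrate the second-order part of $\scal$ by parts. For the smooth $g(t)$ this is exactly the Lee--LeFloch identity of Definition~\ref{defn:R-lower-dist}. You then spend the good term to absorb the resulting $|\tilde\nabla g|\,u^{p-1}|\nabla u|$ contribution, so only first-order quantities of $g$ survive.

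This buys more than you use. Since $n\ge 3$, H\"older and \eqref{eqn:conc-assump} give $\sup_t\int_{B_h(x,2)}|\tilde\nabla g|^2\,d\mathrm{vol}_h\le C\e_1^2$ directly. So you do not need Lemma~\ref{lma:spacetime-L^2} at all, and the time integrability only requires $p\e<1$.

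This also removes the radius mismatch you flagged. The paper's proof has the same mismatch and passes over it: it applies Lemma~\ref{lma:spacetime-L^2}, which controls $B_h(x,1)$, to space-time integrals over $B_h(x,2)$ without extra hypotheses. Your shrunken-cutoff fix would also work, but it is unnecessary.

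\textbf{Far-away case.} Using $t_0-s\le t_0-t$ to pull the Gaussian factor out uniformly for $s\in[t,t_0]$ is cleaner than the paper's time-dependent choice of $L_1$. Note that $d_h(y,x_0)\ge 2R_0$ for $y\in B_h(x,2)$ needs $R_0\ge 1$. You may assume this, as the paper does.
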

\begin{proof}
 We will use $C_i$ to denote any constants depending only on $ n,p,\Lambda,h$. Let $\varphi$ be a cutoff function on $M$ such that $\varphi\equiv 1$ on $B_h(x,1)$, $\varphi$ vanishes outside $B_h(x,2)$, and it satisfies $|\tilde\nabla\varphi|^2+|\tilde\nabla^2 \varphi|\leq 10^4$.

We compute the evolution equation of the $L^1$-norm of $\varphi u^p$ using \eqref{eqn:heat-f-u}:
\begin{equation}
\begin{split}
\frac{d}{dt} \int_M \varphi u^p \,d\mathrm{vol}_{g(t)}
&=\int_M pu^{p-1}\varphi \left( -\Delta_{g(t)}u +\nabla_W u\right) \,d\mathrm{vol}_{g(t)}\\
&\quad +\int_M u^p \varphi\left[(p-1) \scal_{g(t)}+ \mathrm{div}_{g(t)}W\right] \,d\mathrm{vol}_{g(t)}\\
&=(p-1)\int_M p u^{p-2} \varphi|\nabla u|^2 + \,\scal_{g(t)}\cdot u^p \varphi\,d\mathrm{vol}_{g(t)}\\
&\quad -\int_M  u^p( \Delta\varphi+\nabla_W \varphi) \,d\mathrm{vol}_{g(t)}.
\end{split}
\end{equation}

Suppose that $L_1>0$ is such that $0\leq u\leq L_1 t^{-\e}$ on $B_h(x,2)\times (0,t_0]$, which exists by Lemma~\ref{lma:esti-u}. By using 
\begin{equation}
\left\{
\begin{array}{ll}
\Delta\varphi= \tr_g \tilde\nabla^2 \varphi+g^{-1}*g^{-1}* \tilde\nabla g* \nabla \varphi,\\[3pt]
W=g^{-1}*\tilde\nabla g,
\end{array}
\right.
\end{equation}
we conclude that 
\begin{equation}
\begin{split}
 \frac{d}{dt} \int_M \varphi u^p \,d\mathrm{vol}_{g(t)}
&\geq -C_1L_1^p t^{-p\e}\int_{B_h(x,2)} |\Rm(g(t))| \,d\mathrm{vol}_{h}\\
&\quad -C_1L_1^p t^{-p\e}\mathrm{Vol}_h(x,2)-C_1 L_1^p t^{-p\e} \int_{B_h(x,2)}   |\tilde\nabla g|\,d\mathrm{vol}_h\\
&\geq -C_1 L_1^p t^{-p\e}\int_{B_h(x,2)}|\Rm(g(t))| \,d\mathrm{vol}_{h} -C_1L_1^p t^{-p\e}\mathrm{Vol}_h(x,2).
\end{split}
\end{equation}
Here we have used \eqref{eqn:conc-assump}.

We integrate in time and also use Lemma~\ref{lma:spacetime-L^2} and Bishop-Gromov volume comparison to conclude that, if $p\e\leq 1/4$, then 
\begin{equation}
\begin{split}
\int_{B_h(x,1)}u^p d\mathrm{vol}_{g(t)}&\leq \int_{B_h(x,2)}f^p d\mathrm{vol}_{g(t_0)}+C_2L_1^p\int^{t_0}_0 s^{-p\e}\,ds\\
&\quad +C_1L_1^p \int^{t_0}_0 s^{-p\e} \int_{B_h(x,2)}|\Rm(g(s))|\,d\mathrm{vol}_h ds\\
&\leq  \int_{B_h(x,2)}f^p d\mathrm{vol}_{g(t_0)}+C_2L_1^p\\
&\quad +C_1L_1^p \int^{t_0}_0  \int_{B_h(x,2)}\left(|\Rm(g(s))|^2+ s^{-2p\e}\right)\,d\mathrm{vol}_h ds\\
&\leq  \int_{B_h(x,2)}f^p d\mathrm{vol}_{g(t_0)}+C_3L_1^p.
\end{split}
\end{equation}

Let $R_0>2$ be such that $f\in C^\infty_c(B_h(x_0,R_0))$. From Lemma~\ref{lma:esti-u}, we may choose $L_1:=L_0(n,\Lambda) \cdot ||f||_{L^\infty}$ to show that 
\begin{equation}
\int_{B_h(x,1)}u^p d\mathrm{vol}_{g(t)}\leq  \int_{B_h(x,2)}f^p d\mathrm{vol}_{g(t_0)}+C_4 ||f||_{L^\infty}^p
\end{equation}
for all $x\in M$.

If $d_h(x,x_0)\geq 4R_0$, then $d_h(y,x_0)\geq 2R_0$ for all $y\in B_h(x,2)$ and $f \equiv 0$ on $B_h(x,2)$. Therefore, from Lemma~\ref{lma:esti-u}, we may choose $$L_1:=L_0\exp\left(-\frac{d_{h}^2(x_0,x_0)}{4L_0(t_0-t)}\right)\cdot ||f||_{L^\infty},$$ so that 
\begin{equation}
\int_{B_h(x,1)}u^p d\mathrm{vol}_{g(t)}\leq C_4  \exp\left(-\frac{pd_{h}^2(x_0,x)}{4L_0(t_0-t)}\right)\cdot ||f||_{L^\infty}^p.
\end{equation}
This completes the proof.
\end{proof}

Next we will derive a local $W^{1,2}$-bound for $u$. We require an interpolation result in Sobolev space, observed by the second and third author \cite{LitzingerSimon2025}; see also Maz'ja \cite[\S 8.2.1]{Mazya1985}:
\begin{lma}\label{lma:interpolation}
There exists $c_1(n)>0$ such that if $(M,g)$ is a complete manifold and $f:M\to\mathbb{R}_{\geq 0}$ is a smooth function with compact support, then we have
\begin{equation}
\int_M \frac{|\nabla f|^4}{f^2}\,d\mathrm{vol}_g\leq c_1\int_M |\nabla^2 f|^2d\mathrm{vol}_g.
\end{equation}
\end{lma}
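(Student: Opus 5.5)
The plan is a single integration by parts combined with the Cauchy–Schwarz inequality, using that $f$ is non-negative. First I reduce to $f>0$ on the support. For $\delta>0$, replace $f$ by $f+\delta$ on a large ball containing $\mathrm{supp} f$, or more simply work on $\{f>0\}$. Near the zero set, $|\nabla f|^2\le C f$ for non-negative $C^2$ functions, so the integrand $|\nabla f|^4/f^2$ is locally bounded and the left side is finite. The cleanest route is to prove the inequality for $f_\delta:=\sqrt{f^2+\delta^2}-\delta$ or with weight $(f+\delta)^{-2}$, and then let $\delta\to0$ by monotone convergence.

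The key identity is
\[
\frac{|\nabla f|^4}{(f+\delta)^2} = -|\nabla f|^2\,\Big\langle \nabla f,\nabla \tfrac{1}{f+\delta}\Big\rangle\cdot (-1)\cdot(-1),
\]
that is, $|\nabla f|^2\langle\nabla f,\nabla (f+\delta)^{-1}\rangle=-|\nabla f|^4/(f+\delta)^2$. Since $|\nabla f|^2\nabla f$ has compact support, integrating by parts gives
\[
\int_M \frac{|\nabla f|^4}{(f+\delta)^2}\,d\mathrm{vol}_g=\int_M \frac{\mathrm{div}(|\nabla f|^2\nabla f)}{f+\delta}\,d\mathrm{vol}_g .
\]
Here the boundary contribution vanishes because the vector field has compact support and $(f+\delta)^{-1}$ is smooth. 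Expanding the divergence,
\[
\mathrm{div}(|\nabla f|^2\nabla f)=2\nabla^2 f(\nabla f,\nabla f)+|\nabla f|^2\Delta f,
\]
and both terms are bounded pointwise by $(2+\sqrt n)|\nabla^2 f|\,|\nabla f|^2$.

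Next apply Cauchy–Schwarz:
\[
I_\delta\le (2+\sqrt n)\Big(\int_M |\nabla^2 f|^2\Big)^{1/2}\Big(\int_M\frac{|\nabla f|^4}{(f+\delta)^2}\Big)^{1/2}.
\]
Since $I_\delta<\infty$ (as $\delta>0$ and the support is compact), we can divide through to get $I_\delta\le(2+\sqrt n)^2\int|\nabla^2 f|^2$. Letting $\delta\to0$ by monotone convergence gives the claim with $c_1=(2+\sqrt n)^2$.

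No curvature enters anywhere, which is why completeness alone suffices. The only delicate point is justifying the integration by parts near the zero set of $f$, and the $\delta$-regularization handles this cleanly. So I do not expect a real obstacle.
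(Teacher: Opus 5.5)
Your proof is correct and is essentially the argument the paper relies on: the paper invokes the integration-by-parts proof of Lemma 5.2 in Litzinger--Simon, noting that compact support of $f$ (rather than closedness of $M$) justifies the integration by parts, and your identity $\int_M |\nabla f|^4 (f+\delta)^{-2} = \int_M (f+\delta)^{-1}\,\mathrm{div}(|\nabla f|^2\nabla f)$, followed by Cauchy--Schwarz, absorption and monotone convergence, carries this out with $c_1=(2+\sqrt n)^2$. Your side remark $|\nabla f|^2\le Cf$ is not needed, since the $\delta$-regularized bound already gives finiteness of the left-hand side in the limit.
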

\begin{proof}
We may use directly the  proof of \cite[Lemma 5.2]{LitzingerSimon2025}. 
There, the fact that $M$ was closed justified the steps involving integration by parts. Here, we use the fact that $f$ has compact support to justify integration by parts. 
\end{proof}

\begin{prop}\label{prop:Wp-u}
For $\Lambda>1$, there exists $\e_1(n,h,\Lambda)\in (0,1)$ such that the following holds. Suppose that $g(t)$ is a solution to the Ricci–DeTurck $h$-flow on $M\times (0,\hat T]$ satisfying the assumptions of Proposition~\ref{prop:heat-kernel} for some $\e\in (0,\frac{n+2}{4(n-2)})$, let $t_0 \in (0,\hat T]$ be given, and assume in addition that for some $x\in M$ we have 
\begin{equation}\label{eqn:conc-assump-1}
\left(\fint_{B_h(z,2)}|\tilde\nabla g(t)|^n d\mathrm{vol}_h \right)^\frac1n\leq \e_1
\end{equation}
for all $t\in (0,\hat T]$ and $z\in B_h(x,2)$. 
Then there exists $L(n,h,\Lambda)>0$ such that the function $u$ in \eqref{defn:u} satisfies 
\begin{equation}
\int_{B_h(x,1)} |\nabla u|^2 \,d\mathrm{vol}_{g(t)}\leq  L||f||^2_{L^\infty}+\int_{B_h(x,2)} |\nabla^{g(t_0)} f|^2 d\mathrm{vol}_{g(t_0)}
\end{equation}
for all $t\in (0,t_0]$. Furthermore, if $d_h(x,x_0)\geq 4R_0$ then we have
\begin{equation}
\int_{B_h(x,1)} |\nabla u|^2 \,d\mathrm{vol}_{g(t)}\leq  L \exp\left(-\frac{d_{h}^2(x_0,x)}{L(t_0-t)}\right)\cdot ||f||_{L^\infty}^2
\end{equation}
for all $t\in (0,t_0]$.

\end{prop}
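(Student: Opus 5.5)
We compute the time derivative of the localized Dirichlet energy $\int_M \varphi^2|\nabla u|^2\,d\mathrm{vol}_{g(t)}$, where $\varphi$ is a cutoff equal to $1$ on $B_h(x,1)$ and supported in $B_h(x,2)$, with $|\tilde\nabla\varphi|^2+|\tilde\nabla^2\varphi|\leq 10^4$. We then integrate from $t$ to $t_0$, as in Proposition~\ref{prop:Lp-u}. The $(\partial_t g)$ terms are best handled in the Ricci flow gauge $\hat g=\Psi_t^*g$. There $\hat u$ solves $\partial_s\hat u=-\Delta\hat u+\scal\cdot\hat u$, and $\partial_s|\nabla \hat u|^2 = 2\Ric(\nabla \hat u,\nabla \hat u)+2\langle\nabla \hat u,\nabla\partial_s \hat u\rangle$. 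Combining this with the Bochner formula, the diffusion term produces a good-sign quantity $-2\varphi^2|\nabla^2 u|^2$ when we run backwards in time, since we are bounding $u$ at earlier times by its value at $t_0$. After pulling back to the Ricci--DeTurck gauge, the cutoff picks up the transport terms $\nabla_W\varphi$ with $W=g^{-1}*\tilde\nabla g$. We therefore expect an inequality of the form
\begin{equation}
-\frac{d}{dt}\int_M\varphi^2|\nabla u|^2 \leq -\int_M\varphi^2|\nabla^2u|^2 + C\int_M \varphi^2|\Rm|\,|\nabla u|^2 + C\int_M \varphi^2|\nabla\scal|\,u|\nabla u| + C\int_{B_h(x,2)}(1+|\tilde\nabla g|^2)|\nabla u|^2 ,
\end{equation}
plus cutoff terms. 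We would integrate the $\nabla\scal$ term by parts to obtain $\int\varphi^2|\scal|(|\nabla u|^2+u|\nabla^2u|)$ plus cutoff errors.

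The main obstacle is the curvature terms, since only $\int_0^{t_0}\int|\Rm|^2\leq L$ is available from Lemma~\ref{lma:spacetime-L^2}. For $\int\varphi^2|\Rm|u|\nabla^2u|$ we use $\frac12\varphi^2|\nabla^2u|^2+C\varphi^2|\Rm|^2u^2$ together with $u\leq L_0t^{-\e}\|f\|_{L^\infty}$ from Lemma~\ref{lma:esti-u}. For $\int\varphi^2|\Rm||\nabla u|^2$ we apply H\"older and write $|\nabla u|^2=u\cdot(|\nabla u|^2/u)$, so that it is bounded by
\begin{equation}
\Big(\int\varphi^2|\Rm|^2u^2\Big)^{1/2}\Big(\int\varphi^2\frac{|\nabla u|^4}{u^2}\Big)^{1/2}.
\end{equation}
The second factor we control via Lemma~\ref{lma:interpolation} applied to $\varphi u$ (with an $\e$-regularization $u+\sigma$ if needed), which bounds it by $c_1\int|\nabla^2(\varphi u)|^2$. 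Absorbing into the good term then leaves $C\|f\|_\infty^2\, t^{-2\e}\int_{B_h(x,2)}|\Rm|^2$. Integrating in time, this term is not directly controlled, because the weight $t^{-2\e}$ is singular while we only know $\int\!\!\int|\Rm|^2$ finite. We would resolve this with H\"older in time: use the second estimate of Lemma~\ref{lma:spacetime-L^2} with $p=n$, i.e.\ $\int_0^{t_0}\!\int|\tilde\nabla g|^{n+2}\leq L$, together with \eqref{eqn:curv-from-Dg} and the $|\tilde\nabla^2 g|^2$ bound weighted by $|\tilde\nabla g|^{p-2}$ from \eqref{eqn:space-time-L2-1}. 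Alternatively, we would use the $L^p$ bounds of $u$ from Proposition~\ref{prop:Lp-u} with large $p\leq(4\e)^{-1}$ in place of $L^\infty$, pairing $u^2$ in $L^{p/2}$ against $|\Rm|^2$ in a dual space via the Sobolev inequality. The restriction $\e<\frac{n+2}{4(n-2)}$ should be exactly what makes the exponent $p=\frac{2(n+2)}{n-2}$-type pairing admissible. This is also why the hypothesis \eqref{eqn:conc-assump-1} is needed at every $z\in B_h(x,2)$: it lets us apply Lemma~\ref{lma:spacetime-L^2} and Proposition~\ref{prop:Lp-u} on a covering of $B_h(x,2)$ by unit balls.

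The remaining terms are cutoff terms involving $|\tilde\nabla\varphi|,|\tilde\nabla^2\varphi|$, and $\tilde\nabla g$ multiplied by $|\nabla u|^2$ or $u|\nabla^2 u|$. We handle these by Young's inequality, the $L^2$ and $L^p$ bounds for $u$ on $B_h(z,1)$ from Proposition~\ref{prop:Lp-u}, and Sobolev plus smallness $\e_1$ for the $|\tilde\nabla g|^2|\nabla u|^2$ term (as in \eqref{eqn:Sobo-L2}). A term $\int_{B_h(x,2)}|\nabla u|^2$ on the larger ball remains; we close it either by a Gronwall/iteration over nested cutoffs or by first bounding $\int_t^{t_0}\!\int_{B_h(x,2)}|\nabla u|^2$ via the energy identity for $\int\varphi u^2$, whose dissipation term gives exactly the spacetime gradient bound. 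Integrating from $t$ to $t_0$ gives the first estimate with the boundary term $\int_{B_h(x,2)}|\nabla^{g(t_0)}f|^2$. When $d_h(x,x_0)\geq 4R_0$, $f\equiv0$ near $B_h(x,2)$, so that boundary term vanishes. Feeding the exponentially small $L^\infty$ bound of Lemma~\ref{lma:esti-u} and the second estimate of Proposition~\ref{prop:Lp-u} into every occurrence of $u$ then yields the Gaussian decay factor.
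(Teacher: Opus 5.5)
\textbf{There is a genuine gap.} You locate the obstacle correctly: after Cauchy--Schwarz, every curvature term leaves $\int_t^{t_0}\!\int\varphi^2|\Rm|^2u^2$. Neither of your proposed resolutions controls it.

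By \eqref{eqn:curv-from-Dg}, $|\Rm|^2\lesssim 1+|\tilde\nabla g|^4+|\tilde\nabla^2 g|^2$. Your H\"older argument with $\int\!\!\int|\tilde\nabla g|^{n+2}$ handles the $|\tilde\nabla g|^4$ part. For the $|\tilde\nabla^2 g|^2$ part, however, the only information is $\int\!\!\int|\tilde\nabla^2 g|^2<\infty$. There is no extra space-time integrability and no room for a weight $t^{-2\e}$.
\begin{itemize}
\item Already for the flat heat equation, $\int_0^{t_0}t^{-2\e}\|\nabla^2 g(t)\|_{L^2}^2\,dt$ is comparable to the high-frequency part of $\|g_0\|^2_{\dot H^{1+2\e}}$. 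This need not be finite for $g_0\in L^\infty\cap W^{1,n}$.
\item The weighted bound $\int\!\!\int|\tilde\nabla g|^{p-2}|\tilde\nabla^2 g|^2\le C$ says nothing where $|\tilde\nabla g|$ is small.
\item Pairing $u^2\in L^{p/2}$ against $|\Rm|^2$ would need $|\tilde\nabla^2 g|^2\in L^q$ for some $q>1$. The Sobolev inequality upgrades $\tilde\nabla g$, not $\tilde\nabla^2 g$, so it does not supply this.
\item You also cannot move a derivative off $\scal$ in the term $\scal\,u\,\Delta u$, which your integration by parts of the $\nabla\scal$ term produces, without creating $\nabla^3u$.
\end{itemize}

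\textbf{The missing idea is a barrier} that yields the weighted bound $\int\!\!\int\varphi^2|\tilde\nabla^2 g|^2u^2\leq C$ directly. The paper differentiates $\int\varphi^2|\tilde\nabla g|^2u^2\,d\mathrm{vol}_g$ and uses the duality between $\mathfrak{L}$ and $\mathfrak{L}^*$. By \eqref{eqn:evo-grad}, $\mathfrak{L}|\tilde\nabla g|^2\le -\Lambda^{-1}|\tilde\nabla^2 g|^2+C|\tilde\nabla g|^4+C$, and $\mathfrak{L}^*u^2=2|\nabla u|^2+\scal_g u^2$. Together these give
\[
\frac{d}{dt}\int_M\varphi^2|\tilde\nabla g|^2u^2\,d\mathrm{vol}_g\le -C^{-1}\int_M(|\Rm|^2+1)\varphi^2u^2\,d\mathrm{vol}_g+C\int_{B_h(x,2)}|\tilde\nabla g|^4u^2\,d\mathrm{vol}_g+\delta\int_M\varphi^4\frac{|\nabla u|^4}{u^2}\,d\mathrm{vol}_g+C\|f\|_{L^\infty}^2t^{-2\e}.
\]
Subtract a suitable multiple of $\int\varphi^2|\tilde\nabla g|^2u^2$ from $\int\varphi^4|\nabla u|^2$. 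This cancels the bad curvature term exactly. The $\delta$-term is absorbed by Lemma~\ref{lma:interpolation}.

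What remains is only $\int_{B_h(x,2)}u^2|\tilde\nabla g|^4$. This is where your pairing of $\int\!\!\int|\tilde\nabla g|^{n+2}$ with $u\in L^{2(n+2)/(n-2)}$, and the covering of $B_h(x,2)$ by balls $B_h(z,2)$, genuinely enter.

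After integrating from $t$ to $t_0$, the barrier also leaves a boundary term $\int\varphi^2|\tilde\nabla g(t)|^2u(t)^2$ at the lower time, with a plus sign. It must be bounded uniformly in $t$. Do this with \eqref{eqn:conc-assump-1} and Proposition~\ref{prop:Lp-u} at exponent $\frac{2n}{n-2}$, not with $u\le L_0t^{-\e}\|f\|_{L^\infty}$, which blows up as $t\to0$.

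Without this barrier step, your energy inequality cannot be integrated down to $t\to 0$.
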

\begin{proof}
 We will use $C_i$ to denote any constants depending only on $  n,\Lambda,h$. Let $\varphi$ be a smooth cutoff function on $M$ such that $\varphi\equiv 1$ on $B_h(x,1)$, $\varphi$ vanishes outside $B_h(x,2)$, and it satisfies $\varphi^{-1}|\tilde\nabla\varphi|^2+|\tilde\nabla^2 \varphi|\leq 10^4$. We will omit $t$ in $g(t)$ for notational convenience. We also let $L_1>0$ be a constant such that $u\leq L_1 t^{-\e}$ on $B_h(x,2)\times (0,t_0]$.

We compute the evolution equation of the $L^2$-norm of $\varphi^4 |\nabla u|^2$ using  \eqref{eqn:heat-f-u}: 
 \begin{equation}\label{eqn:evo-W^12}
 \begin{split}
 \frac{d}{dt}\int_M \varphi^4 |\nabla u|^2 d\mathrm{vol}_{g}
 &=\int_M \varphi^4 |\nabla u|^2 (-\scal_{g}+\mathrm{div}_{g}W)\,d\mathrm{vol}_{g}\\
 &\quad +\int_M\varphi^4 (2\Ric-\mathcal{L}_Wg)(\nabla u,\nabla u)\,d\mathrm{vol}_{g}\\
 &\quad +\int  2\varphi^4 \langle \nabla (-\Delta u+\nabla_W u +\scal_{g}u),\nabla u \rangle \, d\mathrm{vol}_{g}\\
 &=:\mathbf{I}+\mathbf{II}+ \mathbf{III}.
 \end{split}
 \end{equation}

We estimate $\mathbf{III}$ more carefully. To that end, we split it into three components $\mathbf{III}=:\mathbf{III}_1+\mathbf{III}_2+\mathbf{III}_3$ in the obvious way. By Stokes' theorem, we have 
\begin{equation}\label{eqn:evo-W^12-1}
\begin{split}
\mathbf{III}_3&=-2\int_M \scal_g\, u\cdot  \mathrm{div}_g(\varphi^4 \nabla u)\,d\mathrm{vol}_g\\
&=-2\int_M \scal_g\, u\cdot  (\langle \nabla\varphi^4,\nabla u\rangle+ \varphi^4 \Delta u)\,d\mathrm{vol}_g,
\end{split}
\end{equation}
while by the Bochner formula, we have 
\begin{equation}\label{eqn:evo-W^12-2}
\begin{split}
\mathbf{III}_1&=-2\int_M \varphi^4 \langle \nabla \Delta u ,\nabla u\rangle \,d\mathrm{vol}_g\\
&=\int_M \varphi^4\left[-\Delta |\nabla u|^2+2|\nabla^2 u|^2+2\Ric(\nabla u,\nabla u)\right]\,d\mathrm{vol}_g\\
&=\int_M \varphi^4\left(2|\nabla^2 u|^2+2\Ric(\nabla u,\nabla u)\right)+ \langle \nabla\varphi^4, \nabla |\nabla u|^2\rangle\,d\mathrm{vol}_g.
\end{split}
\end{equation}

To handle $\mathbf{III}_2$, we note that
\begin{equation}\label{eqn:evo-W^12-3}
\begin{split}
&\quad -(\mathcal{L}_W g)(\nabla u,\nabla u)+ 2\langle \nabla \nabla_W u,\nabla u\rangle+\mathrm{div}_g W\cdot  |\nabla u|^2\\
&= -2u_i u_j\nabla^i W^j+ 2u_i \nabla^i (W^ju_j)+\mathrm{div}_g W\cdot  |\nabla u|^2\\
&= 2u_i W^j \nabla^i u_j+\mathrm{div}_g W\cdot  |\nabla u|^2\\
&= \mathrm{div}_g \left(  |\nabla u|^2W\right).
\end{split}
\end{equation}

Once again, by Stokes' theorem, we combine  \eqref{eqn:evo-W^12}, \eqref{eqn:evo-W^12-1}, \eqref{eqn:evo-W^12-2} and \eqref{eqn:evo-W^12-3} to conclude that
\begin{equation}\label{eqn:enert}
\begin{split}
\frac{d}{dt}\int_M \varphi^4 |\nabla u|^2 d\mathrm{vol}_{g}
 &=\int_M -\varphi^4 |\nabla u|^2\scal_g+4\varphi^4\cdot  \Ric(\nabla u,\nabla u)d\mathrm{vol}_g\\
 &\quad +\int_M 2\varphi^4\left(|\nabla^2 u|^2-\scal_g u\cdot  \Delta_g u\right)d\mathrm{vol}_g\\
 &\quad+\int_M \langle \nabla\varphi^4, -\,\scal_g \nabla u^2+\nabla|\nabla u|^2-|\nabla u|^2W \rangle d\mathrm{vol}_g.
\end{split}
\end{equation}

We will apply the Cauchy–Schwarz inequality repeatedly to simplify the energy inequality. We first control the terms involving curvature. For any $1>\delta>0$, we have
\begin{equation}\label{eqn:esti-involving-RM}
    \begin{split}
        &\quad \left|\int_M 4\varphi^4 \Ric(\nabla u ,\nabla u) -\mathrm{scal}_g \cdot \left( \varphi^4 |\nabla u|^2+2\varphi^4 u \cdot \Delta _g u +\langle \nabla \varphi^4 ,\nabla u^2\rangle  \right) d\mathrm{vol}_g \right|\\
        &\leq C_n \int_M \varphi^4 |\Rm|  \left(|\nabla u|^2 +u |\nabla^2u |+\frac{|\nabla\varphi|^2}{\varphi^2} u^2\right)d\mathrm{vol}_g\\
        &\leq \delta  \int_M  \left(\frac{|\nabla u|^4}{u^2}+ |\nabla^2 u|^2  \right)\varphi^4 d\mathrm{vol}_g   \\
        &\quad +C_n\delta^{-1} \int_M  \varphi^2 u^2(|\Rm|^2+1) \left(\varphi^2  +  |\nabla\varphi|^2 \right)d\mathrm{vol}_g.
    \end{split}
\end{equation}

We next control the inner product term. For any $1>\delta>0$,  we also have
\begin{equation}\label{eqn:innter}
    \begin{split}
    &\quad \left|\int_M \langle \nabla\varphi^4 , \nabla |\nabla u|^2-W|\nabla u|^2 \rangle \,d\mathrm{vol}_g\right|\\
    &\leq C_n\int_M \varphi^3 |\nabla \varphi|\left( |\nabla u||\nabla^2u|+|\nabla u|^2 |\tilde\nabla g|\right) \,d\mathrm{vol}_g\\
    &\leq \int_M \delta |\nabla^2 u|^2 \varphi^4+ C_n\delta^{-1}\varphi^2 |\nabla\varphi|^2|\nabla u|^2+C_n \varphi^3 |\nabla\varphi||\nabla u|^2|\tilde\nabla g|  \,d\mathrm{vol}_g
    \\
    &\leq \delta\int_M \left(\frac{|\nabla u|^4}{u^2} +|\nabla^2 u|^2  \right)\varphi^4 d\mathrm{vol}_g \\
    &\quad +C_n\delta^{-3}\int_M \left(|\nabla \varphi|^4 u^2+\varphi^2 u^2 |\tilde\nabla g|^2|\nabla\varphi|^2  \, \right)d\mathrm{vol}_g. 
    \end{split}
\end{equation}

By substituting \eqref{eqn:esti-involving-RM} and \eqref{eqn:innter} into \eqref{eqn:enert}, we arrive at the following inequality:
\begin{equation}\label{eqn:energy-control}
\begin{split}
\frac{d}{dt}\int_M \varphi^4 |\nabla u|^2 d\mathrm{vol}_{g}
 &\geq \int_M \varphi^4 \left(|\nabla^2 u|^2 -\delta \frac{|\nabla u|^4}{u^2} \right) \,d\mathrm{vol}_g\\
 &\quad -C_2\delta^{-1}\int_M \varphi^2(|\tilde\nabla\varphi|^2+\varphi^2)( |\Rm(g)|^2+1) u^2 \,d\mathrm{vol}_g\\
 &\quad -C_2\delta^{-3}\int_M u^2 |\tilde\nabla\varphi|^4+u^2 \varphi^2|\tilde\nabla\varphi|^2 |\tilde\nabla g|^2\,d\mathrm{vol}_g
\end{split}
\end{equation}
for any $1>\delta>0$.

We now use interpolation to squeeze more positivity from the first term. By applying  Lemma~\ref{lma:interpolation} to $f=\varphi^2 u$, we have 
\begin{equation}
\begin{split}
\int_M \frac{|\nabla (\varphi^2 u)|^4}{(\varphi^2 u)^2} d\mathrm{vol}_g\leq c_1 \int_M |\nabla^2 (\varphi^2 u)|^2 d\mathrm{vol}_g.
\end{split}
\end{equation}

Hence, the Cauchy–Schwarz inequality implies
\begin{equation}
\begin{split}
\int_M \varphi^4 \frac{|\nabla u|^4}{u^2}d\mathrm{vol}_g &\leq C_3\int_M \frac{|\nabla (\varphi^2 u)|^4}{(\varphi^2 u)^2} +u^2 |\nabla\varphi|^4 d\mathrm{vol}_g\\
&\leq C_3 c_1 \int_M |\nabla^2 (\varphi^2 u)|^2+u^2 |\nabla\varphi|^4\,d\mathrm{vol}_g\\
&\leq C_4\int_M \varphi^4|\nabla^2 u|^2 +\varphi^2|\nabla^2\varphi|^2 u^2 \,d\mathrm{vol}_g\\
&\quad +C_4\int_M\varphi^2|\nabla\varphi|^2 |\nabla u|^2+u^2 |\nabla\varphi|^4\,d\mathrm{vol}_g\\
&\leq  C_5\int_M \varphi^4|\nabla^2 u|^2+\varphi^2 |\tilde\nabla^2 \varphi|^2u^2+\varphi^2|\tilde\nabla \varphi|^2|\tilde\nabla g|^2 u^2 d\mathrm{vol}_g\\
&\quad +\int_M  \frac12\varphi^4 \frac{|\nabla u|^4}{u^2} +C_5 u^2|\tilde\nabla\varphi|^4 \,d\mathrm{vol}_g.
\end{split}
\end{equation}
Here we have used $\nabla^2\varphi=\tilde\nabla^2 \varphi+g^{-1}*g^{-1}*\tilde\nabla g*\tilde\nabla\varphi$.

Using also the estimate for $\varphi$, this shows that 
\begin{equation}\label{eqn:interpolation-with-err}
\begin{split}
\int_M \varphi^4 \frac{|\nabla u|^4}{u^2}d\mathrm{vol}_g &\leq C_6\int_M \left(\varphi^4 |\nabla^2 u|^2+\varphi^2 |\tilde\nabla g|^2 u^2\right) \,d\mathrm{vol}_g  +C_6 L_1^2 t^{-2\e}\\
&\leq C_6\int_M  \varphi^4 |\nabla^2 u|^2 \,d\mathrm{vol}_g  +C_7 L_1^2 t^{-2\e},
\end{split}
\end{equation}
where we have used \eqref{eqn:conc-assump-1}.

We combine \eqref{eqn:conc-assump-1}, \eqref{eqn:interpolation-with-err}, \eqref{eqn:energy-control} with $\delta := (2C_6)^{-1}$, so that 
\begin{equation}\label{eqn:test-grad-u}
\begin{split}
&\quad \frac{d}{dt}\int_M \varphi^4 |\nabla u|^2 d\mathrm{vol}_{g}\\
&\geq  -C_8L_1^2 t^{-\frac12 -2\e}-C_8\int_{M}\varphi^2 u^2(|\Rm(g)|^2+1)\,d\mathrm{vol}_h+\frac12 \int_M \varphi^4 |\nabla^2 u|^2\,d\mathrm{vol}_g.
\end{split}
\end{equation}

\medskip

We now prepare a barrier function. Recall from \eqref{eqn:evo-grad} that 
\begin{equation}
\begin{split}
\mathfrak{L}(|\tilde\nabla g|^2)&:= \left(\partial_t-\Delta_g-\nabla_W\right)|\tilde\nabla g|^2 \\
&\leq g^{ij} \Psi_{ij}^k \tilde\nabla_k|\tilde\nabla g|^2-\nabla_W |\tilde\nabla g|^2-\Lambda^{-1} |\tilde\nabla^2 g|^2+C_9 |\tilde\nabla g|^4+C_9\\
&= -\Lambda^{-1}|\tilde\nabla^2 g|^2+C_9|\tilde\nabla g|^4+C_9,
\end{split}
\end{equation}
where $\mathfrak{L}$ is defined in \eqref{eqn:label-L} and we have used $\Psi_{ij}^k:=\Gamma_{ij}^k-\tilde\Gamma_{ij}^k$ and $W^k=g^{ij}\Psi_{ij}^k$. Together with 
\begin{equation}
\mathfrak{L}^* u^2:=(\partial_t+\Delta_g-\nabla_W -\scal_g)u^2=2|\nabla u|_g^2+\scal_g  u^2,
\end{equation}
we arrive at 
\begin{equation}\label{eqn:evo-grad-wth-u}
\begin{split}
\frac{d}{dt}\int_M \varphi^2 |\tilde\nabla g|^2 u^2\,d\mathrm{vol}_g
&=\int_M \mathfrak{L} (|\tilde\nabla g|^2)  \cdot \varphi^2 u^2 + |\tilde\nabla g|^2 \cdot \mathfrak{L}^*(\varphi^2 u^2)\,d\mathrm{vol}_g\\
&=\int_M \mathfrak{L} (|\tilde\nabla g|^2)  \cdot \varphi^2 u^2 + |\tilde\nabla g|^2 \varphi^2\cdot \mathfrak{L}^*( u^2)\,d\mathrm{vol}_g\\
&\quad +\int_M |\tilde\nabla g|^2 \big(2\langle\nabla\varphi^2,\nabla u^2\rangle \\
&\hspace{3em} +\scal_g \,\varphi^2 u^2+u^2\mathfrak{L}^* \varphi^2 \big)\,d\mathrm{vol}_g\\
&\leq \int_M \left(-\Lambda^{-1}|\tilde\nabla^2 g|^2+C_9|\tilde\nabla g|^4+C_9 \right)\varphi^2 u^2\,d\mathrm{vol}_g\\
&\quad +\int_M2 \varphi^2|\tilde\nabla g|^2 (|\nabla u|^2+\,\scal_g\,  u^2)\,d\mathrm{vol}_g\\
&\quad +\int_M |\tilde\nabla g|^2 \left(4u\langle\nabla \varphi^2,\nabla u\rangle+u^2(\Delta \varphi^2-\nabla_W\varphi^2) \right)\,d\mathrm{vol}_g\\
&=:\mathbf{IV}+\mathbf{V}+\mathbf{VI}.
\end{split}
\end{equation}

We first handle $\mathbf{V}$. If we write $\scal_g=\tr_g \widetilde\Ric+\tilde\nabla g*\tilde\nabla g+\tilde\nabla^2g$, then
\begin{equation}\label{eqn:I-evo-grad-wth-u}
\begin{split}
&\quad 2 \varphi^2|\tilde\nabla g|^2 (|\nabla u|^2+\,\scal_g\,  u^2)\\
&\leq \frac12 \Lambda^{-1} |\tilde\nabla^2 g|^2\varphi^2u^2+C_{10}\delta^{-1} (1+|\tilde\nabla g|^4) u^2\varphi^2+\delta \frac{|\nabla u|^4}{u^2}\varphi^2
\end{split}
\end{equation}
for any $\delta>0$. 

For $\mathbf{VI}$, we use $\Delta=\tr_g \tilde\nabla^2+(g^{-1}*g^{-1}*\tilde\nabla g )\ \tilde \nabla$ and Young's inequality repeatedly to obtain
\begin{equation}\label{eqn:II-evo-grad-wth-u}
\begin{split}
\mathbf{VI}&\leq C_{11}\int_{B_h(x,2)}u^2\,d\mathrm{vol}_g+C_{11}\int_M \left(u^2\varphi^2 |\tilde\nabla g|^4+\varphi^2|\tilde\nabla g|^2 |\nabla u|^2\right)\,d\mathrm{vol}_g\\
&\leq C_{12}\delta^{-1}\int_{B_h(x,2)}u^2\,d\mathrm{vol}_g+\int_{B_h(x,2)} C_{12}\delta^{-1} u^2 |\tilde\nabla g|^4+\delta \varphi^4\frac{|\nabla u|^4}{u^2}\,d\mathrm{vol}_g\\
&\leq C_{12}\delta^{-1}L_1^2 t^{-2\e} +\delta \int_M\varphi^4\frac{|\nabla u|^4}{u^2}\,d\mathrm{vol}_g+C_{12}\delta^{-1}\int_{B_h(x,2)} u^2 |\tilde\nabla g|^4 d\mathrm{vol}_g
\end{split}
\end{equation}
for any $\delta>0$. 

By inserting \eqref{eqn:I-evo-grad-wth-u} and \eqref{eqn:II-evo-grad-wth-u} into \eqref{eqn:evo-grad-wth-u}, we deduce that 
\begin{equation}\label{eqn:test-grad-u-0}
\begin{split}
\frac{d}{dt}\int_M \varphi^2 |\tilde\nabla g|^2 u^2\,d\mathrm{vol}_g&\leq C_{13} \delta^{-1} L_1^2t^{-2\e}-C_{13}^{-1}\int_M (|\Rm(g)|^2+1) \varphi^2 u^2\,d\mathrm{vol}_g\\
&\quad +C_{13}\delta^{-1}\int_{B_h(x,2)} |\tilde\nabla g|^4  u^2 d\mathrm{vol}_g+\delta \int_M \varphi^4 \frac{|\nabla u|^4}{u^2}d\mathrm{vol}_g.
\end{split}
\end{equation}
Here we have used \eqref{eqn:curv-from-Dg} to get 
\begin{equation}
    |\Rm(g)|^2+1 \leq C_n\left( |\Rm(\tilde g)|+1+|\tilde\nabla g|^4+|\tilde\nabla^2g|^2 \right).
\end{equation}
in order to replace $|\tilde\nabla^2 g|^2$ by $ |\Rm(g)|^2+1 $ with errors.

\medskip

Now we combine \eqref{eqn:test-grad-u-0}, \eqref{eqn:interpolation-with-err} and \eqref{eqn:test-grad-u}, with $\delta=(2C_6)^{-1}$ to deduce that 
\begin{equation}\label{eqn:ODE-energy}
\begin{split}
&\quad \frac{d}{dt}\int_M \left(\varphi^4 |\nabla u|^2- C_8^{-1}C_{13}\varphi^2 |\tilde\nabla g|^2 u^2\right)d\mathrm{vol}_g\\
&\geq -C_{14} L_1^2 t^{-2\e}-C_{14}\int_{B_h(x,2)} u^2|\tilde\nabla g|^4\,d\mathrm{vol}_h.
\end{split}
\end{equation}

Thanks to the range of $\e$, Lemma~\ref{lma:spacetime-L^2} and a simple covering argument imply  
\begin{equation}\label{eqn:space-time-Dg-u}
\begin{split}
&\quad \int^{t_0}_0\int_{B_h(x,2)} u^2 |\tilde\nabla g(s)|^4 \, d\mathrm{vol}_hds\\ &\leq \left(\int^{t_0}_0\int_{B_h(x,2)} |\tilde\nabla g(s)|^{n+2} \, d\mathrm{vol}_hds\right)^\frac{4}{n+2}\cdot \left(\int^{t_0}_0\int_{B_h(x,2)} u^\frac{2(n+2)}{n-2} \, d\mathrm{vol}_hds \right)^\frac{n-2}{n+2}\\
&\leq C_{15} L_1^2.
\end{split}
\end{equation}

Using \eqref{eqn:space-time-Dg-u} and \eqref{eqn:conc-assump-1}, 
we may integrate \eqref{eqn:ODE-energy} from $t\in (0,t_0]$ to $t_0$, which yields
\begin{equation}
\begin{split}
&\quad \int_{B_h(x,1)} |\nabla u|^2 \,d\mathrm{vol}_{g(t)}\leq C_{16}L_1^2+\int_{B_h(x,2)} |\nabla^{g(t_0)} f|^2 d\mathrm{vol}_{g(t_0)}.
\end{split}
\end{equation}

This completes the proof by choosing $L_1$ according to the size of $d_h(x,x_0)$ by Lemma~\ref{lma:esti-u}, as in the proof of Proposition~\ref{prop:Lp-u}.
\end{proof}

\section{Preservation of scalar curvature bounds}\label{sec:scalar}

Using the Ricci flow smoothing from Theorem~\ref{thm:RDF-exist}, we show that if the initial metric $g_0$ has $\scal(g_0)\geq \kappa$ in the distributional sense, then $\scal(g(t))\geq \kappa$ for all $t\in (0,T]$. 

\begin{thm}\label{thm:RDF-preserved} 
Let $g_0$ be a $L^\infty\cap W^{1,n}$-metric on $M$ with respect to a metric $h$ with bounded geometry and let $g(t)$ be the Ricci–DeTurck $h$-flow obtained from Theorem~\ref{thm:RDF-exist}. Suppose that $\scal(g_0)\geq \kappa$ in the distributional sense. Then $\scal(g(t))\geq \kappa$ for all $t\in (0,T]$. 
\end{thm}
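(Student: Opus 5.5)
Fix $t_0\in(0,T]$ and a non-negative $f\in C^\infty_c(M)$. I will show $\int_M(\scal_{g(t_0)}-\kappa)f\,d\mathrm{vol}_{g(t_0)}\geq 0$; since $f$ is arbitrary, this gives $\scal(g(t_0))\geq\kappa$ pointwise. Throughout I use the duality argument of \cite{JiangShengZhang2023,LitzingerSimon2025}.

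\textbf{Step 1 (set-up).} I first shrink the time interval. Since $g_0\in W^{1,n}$ globally, the last part of Theorem~\ref{thm:RDF-exist} gives $|\tilde\nabla g|^2+|\tilde\nabla^2 g|\leq \e t^{-1}$ for small $t$, hence $\scal(g(t))\geq -C\e t^{-1}$. Choosing $\e$ small, Proposition~\ref{prop:heat-kernel} applies on some $(0,\hat T]$ with $\e<\frac{n+2}{4(n-2)}$. Moreover, $W^{1,n}$-convergence together with absolute continuity of the integral, plus (b) of Theorem~\ref{thm:RDF-exist}, lets me also arrange the smallness condition \eqref{eqn:conc-assump-1} uniformly in $x$. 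For $t_0\leq\hat T$, I let $u$ be defined by \eqref{defn:u}; then $u\geq0$. For $t_0>\hat T$ the claim then follows from the smooth maximum principle for scalar curvature along the (bounded-curvature) flow on $[\hat T,T]$, since $\mathfrak{L}\,\scal\geq \frac2n\scal^2$ in the gauge.

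\textbf{Step 2 (monotonicity).} By Lemma~\ref{lma:dist-monot},
$$\int_M(\scal_{g(t_0)}-\kappa)f\,d\mathrm{vol}_{g(t_0)}\geq \liminf_{t\to0^+}\int_M(\scal_{g(t)}-\kappa)u(t)\,d\mathrm{vol}_{g(t)},$$
so it suffices to show the right-hand limit is $\geq 0$. I rewrite $\scal_{g(t)}\,d\mathrm{vol}_{g(t)}$ in the divergence form of Definition~\ref{defn:R-lower-dist} (valid for smooth $g(t)$) with test function $\varphi=u(t)$. This gives $\langle\langle\scal(g(t)),u(t)\rangle\rangle$, a bilinear expression in $V_t$, $F_t$, $\tilde\nabla(u\,d\mathrm{vol}_{g}/d\mathrm{vol}_h)$ and $u$. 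The test function is not compactly supported, so I insert the cutoff $\varphi_R$ and remove it using the Gaussian decay in Lemma~\ref{lma:esti-u}, Proposition~\ref{prop:Lp-u} and Proposition~\ref{prop:Wp-u}.

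\textbf{Step 3 (passing to the limit, the main obstacle).} I need to show
$$\langle\langle\scal(g(t)),u(t)\rangle\rangle-\langle\langle\scal(g_0),u(t)\rangle\rangle\to0$$
and $\int\kappa u(t)\,d\mathrm{vol}_{g(t)}-\kappa\int u(t)\,d\mathrm{vol}_{h}\frac{d\mathrm{vol}_{g_0}}{d\mathrm{vol}_h}\to0$. Since $u(t)\geq0$ is an admissible (approximable) test function, the distributional hypothesis then gives the lower bound $\kappa\int u\,d\mathrm{vol}_{g_0}$, and the $\kappa$ terms cancel.

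The difficulty is that $u(t)$ is only bounded by $t^{-\e}$ in $L^\infty$, and its gradient is controlled only in $L^2$. The terms are handled as follows. The $V$ term is of type $\tilde\nabla g\cdot\tilde\nabla u$; its difference is bounded by $\|\tilde\nabla g(t)-\tilde\nabla g_0\|_{L^2(B)}\,\|\nabla u\|_{L^2(B)}$ on each unit ball, where the gradient factor is uniform by Proposition~\ref{prop:Wp-u}. The $F$ term is quadratic in $\tilde\nabla g$; it is bounded by $\|\tilde\nabla g(t)-\tilde\nabla g_0\|_{L^{n}}$ times $\|\tilde\nabla g\|_{L^n}$ times $\|u\|_{L^{n/(n-2)}}$, which is controlled by Proposition~\ref{prop:Lp-u} since $p=n/(n-2)\leq(4\e)^{-1}$. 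The zeroth-order and volume-density differences use $g(t)\to g_0$ a.e.\ in a bounded fashion, together with $L^p$ bounds on $u$. Each local bound is summed over a uniformly locally finite cover of unit balls, and the Gaussian factors in Propositions~\ref{prop:Lp-u} and~\ref{prop:Wp-u} make the sum converge. The remaining point is that $g(t)\to g_0$ in $W^{1,n}_{\loc}$ gives the required local smallness, and the tails are uniformly small.
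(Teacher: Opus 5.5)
Your proposal is correct and follows the paper's proof essentially step for step: reduction to small times, monotonicity via Lemma~\ref{lma:dist-monot}, testing the distributional hypothesis against the cut-off $u\varphi_R$, and controlling the difference terms. That control uses the uniform bounds of Propositions~\ref{prop:Lp-u} and~\ref{prop:Wp-u}, $W^{1,n}_{\loc}$-convergence near $x_0$, and Gaussian decay against the $e^{CR}$ growth of the unit-ball covers. The one thing the paper makes more explicit is the mixed terms you lump under ``zeroth-order'', such as $u|g-g_0||\tilde\nabla g|^2$ and $|\tilde\nabla u||g-g_0||\tilde\nabla g|$: it handles them by H\"older with $u\in L^{2n/(n-2)}$ and $|g-g_0|\to 0$ in $L^{2n/(n-2)}_{\loc}$, which is why it fixes $\e$ with $\frac{2n}{n-2}<(4\e)^{-1}$.
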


\begin{proof}
Since $g(t)$ is smooth with bounded geometry for $t\in (0,T]$, it suffices to show that for some $\hat T>0$ we have $\scal(g(t))\geq \kappa$ on $M\times (0, \hat T]$. We fix $\e>0$ small enough such that $\frac{2n}{n-2}<(4\e)^{-1}$ and let $\hat T(\e,h,n,\Lambda)>0$ such that the assumptions of Proposition~\ref{prop:Lp-u} are satisfied on $(0,\hat T]$ by Theorem~\ref{thm:RDF-exist}.

Let $t_0\in (0,\hat T]$ and let $f$ be a compactly supported smooth non-negative function and $u$ the function obtained from \eqref{eqn:heat-f-u}. By Lemma~\ref{lma:dist-monot} and $u(t_0)=f$, we have 
\begin{equation}\label{eqn:scal-gt-1}
     \int_M (\scal_{g(t_0)}-\kappa)\cdot f \,d\mathrm{vol}_{g(t_0)} \geq \liminf_{t\to0^+}  \int_M (\scal_{g(t)}-\kappa)\cdot u(t) \,d\mathrm{vol}_{g(t)}.
\end{equation}

It suffices to show that the limit on the right hand side is non-negative. Let $R_0>0$ and $R_m:=2^m R_0$, where $m\in \mathbb{N}$. Denote $R:=R_m$ for convenience. Moreover, we let $\varphi_R$ be the cutoff function on $M$ constructed in the proof of Lemma~\ref{lma:dist-monot}. We write $u=u\varphi_R+ u(1-\varphi_R)=:u_R+\hat u_R$. For $t\to 0^+$, we split the scalar curvature integral according to
\begin{equation}\label{eqn:scal-gt}
\begin{split}
   &\quad  \int_M (\scal_{g(t)}-\kappa)\cdot u \,d\mathrm{vol}_{g(t)}\\
   &=\int_M (\scal_{g(t)}-\kappa)\cdot u_R \,d\mathrm{vol}_{g(t)}+\int_M (\scal_{g(t)}-\kappa)\cdot \hat u_R \,d\mathrm{vol}_{g(t)}\\
    &= \langle\langle \scal_{g(t)}-\kappa, u_R\rangle\rangle_{g(t)}  +\int_M (\scal_{g(t)}-\kappa)\cdot \hat u_R \,d\mathrm{vol}_{g(t)}\\
&\geq \Big[\langle\langle \scal_{g(t)}, u_R\rangle\rangle_{g(t)} -\langle\langle \scal_{g_0},  u_R\rangle\rangle_{g_0} \Big] 
+\kappa \int_M u_R \left(d\mathrm{vol}_{g_0}-d\mathrm{vol}_{g(t)} \right)
\\
&\quad +\int_M (\scal_{g(t)}-\kappa)\cdot \hat u_R \,d\mathrm{vol}_{g(t)}\\
    &=:\mathbf{I}+\mathbf{II}+\mathbf{III},
\end{split}
\end{equation}
where we have used $
\langle\langle \scal_{g_0}-\kappa ,\tilde u_R\rangle\rangle_{g_0}\geq 0$ from the assumption, as $u_R$ is compactly supported. Thanks to Lemma~\ref{lma:dist-monot}, $(\mathrm{scal}_{g(t)}-\kappa)\cdot u$ is integrable for each $t\in (0,t_0]$. By dominated convergence Theorem, $\mathbf{III}\to 0$ as $R\to+\infty$ or, equivalently, $m\to+\infty$.

It remains to estimate $\mathbf{I}$ and $\mathbf{II}$. We write $R_i:=2^i R_0$ for $i\geq 0$, so that $R=R_m$. In the following, we will use $C_i$ to denote any constants which depend only on $n,h,\Lambda, f, \kappa,t_0$, but not on $R_0$ and $m$. We will write $g$ instead of $g(t)$ for convenience. 

 Since $\varphi$ is compactly supported on $\{x: \rho\leq 2R\}\subseteq B_h(x_0,2R)$,  it is clear that 
\begin{equation}\label{eqn:II}
\begin{split}
|\mathbf{II}|&\leq C_1 \int_{B_h(x_0,2R)} u\cdot  |g-g_0| \,d\mathrm{vol}_h.
\end{split}
\end{equation}

For $\mathbf{I}$, we  use the distributional formulation from Definition~\ref{defn:R-lower-dist}:
\begin{equation}
\begin{split}
\mathbf{I}&=\int_M \left\langle V_{g_0},\tilde  \nabla \left( u_R \cdot\frac{d\mathrm{vol}_{g_0}}{d\mathrm{vol}_h}\right)\right\rangle_h-\left\langle V_{g},\tilde  \nabla \left( u_R \cdot \frac{d\mathrm{vol}_{g}}{d\mathrm{vol}_h}\right)\right\rangle_h\; d\mathrm{vol}_h\\
&\quad +\int_M \left( F_{g}  u_R \cdot \frac{d\mathrm{vol}_{g}}{d\mathrm{vol}_h}- F_{g_0}  u_R \cdot \frac{d\mathrm{vol}_{g_0}}{d\mathrm{vol}_h}\,\right)\; d\mathrm{vol}_h.
\end{split}
\end{equation}

By further linearizing each term, we see that 
\begin{equation}\label{eqn:I-leqII+III.}
\begin{split}
|\mathbf{I}|&\leq \mathbf{IV}+\mathbf{V}+\mathbf{VI}+\mathbf{VII},
\end{split}
\end{equation}
where 
\begin{equation}
\left\{
\begin{array}{ll}
\displaystyle \mathbf{IV}:= \int_M |V_{g_0}-V_{g}|\left| \tilde \nabla\left( u_R \cdot \frac{d\mathrm{vol}_{g}}{d\mathrm{vol}_h}\right)\right|\,d\mathrm{vol}_h,\\[4mm]
\displaystyle \mathbf{V}:= \int_M |V_{g}|\left| \tilde \nabla\left[ u_R \cdot \left(\frac{d\mathrm{vol}_{g}}{d\mathrm{vol}_h}-\frac{d\mathrm{vol}_{g_0}}{d\mathrm{vol}_h} \right)\right]\right|\,d\mathrm{vol}_h,\\[4mm]
\displaystyle \mathbf{VI}:= \int_M |F_{g}-F_{g_0}|  u_R \,d\mathrm{vol}_{g},\\[4mm]
\displaystyle \mathbf{VII}:= \int_M |F_{g_0}|    u_R\cdot \left|\frac{d\mathrm{vol}_{g}}{d\mathrm{vol}_h} -\frac{d\mathrm{vol}_{g_0}}{d\mathrm{vol}_h} \right| \,d\mathrm{vol}_{h}.
\end{array}
\right.
\end{equation}

From \eqref{eqn:F-V-dist}, it is clear that 
\begin{equation}\label{eqn:IV}
\begin{split}
\mathbf{IV}&\leq C_2\int_{B_h(x_0,2R)} \left[ |g-g_0||\tilde\nabla g|+ |\tilde \nabla(g-g_0)| \right] \cdot \\
&\hspace{4em} \left[ |\tilde \nabla u| +u |\tilde\nabla \varphi_R|+ u |\tilde\nabla g| \right] \,d\mathrm{vol}_h\\
&\leq C_2\int_{B_h(x_0,2R)} \left(  |g-g_0||\tilde\nabla g|+ |\tilde \nabla(g-g_0)|\right)\cdot  \left( |\tilde \nabla u|+ u|\tilde \nabla g|\right)\,d\mathrm{vol}_h\\
&\quad +C_3R^{-1}  \int_{ \{x: R\leq \rho\leq 2R\} }   u \left(  |g-g_0||\tilde\nabla g|+ |\tilde \nabla(g-g_0)|\right)\,d\mathrm{vol}_h\\
&=: \mathbf{IV}_1+\mathbf{IV}_2.
\end{split}
\end{equation}

Similarly, we have
\begin{equation}\label{eqn:V}
\begin{split}
\mathbf{V}&\leq C_4\int_{B_h(x_0,2R)} |\tilde\nabla g|\left( |\tilde\nabla u| |g-g_0|+  u|\tilde\nabla \varphi_R| |g-g_0|+u |\tilde\nabla (g-g_0)| \right)\,d\mathrm{vol}_h\\
&\leq C_4\int_{B_h(x_0,2R)} |\tilde \nabla g| \left( |\tilde\nabla u| |g-g_0|+u |\tilde\nabla (g-g_0)| \right)\,d\mathrm{vol}_h\\
&\quad +C_5 R^{-1}\int_{\{x:R\leq \rho\leq 2R \}} u|\tilde\nabla g| |g-g_0|\,d\mathrm{vol}_h\\
&=: \mathbf{V}_1+\mathbf{V}_2,
\end{split}
\end{equation}
and 
\begin{equation}\label{eqn:VI+VII} 
\begin{split}
\mathbf{VI}+\mathbf{VII} &\leq C_6\int_{B_h(x_0,2R)} u \Big[|g-g_0|+|g-g_0| |\tilde \nabla g|^2 \\
&\hspace{3em} + |\tilde \nabla g||\tilde \nabla (g-g_0)|+|\tilde \nabla g_0||\tilde \nabla (g-g_0)|\Big]
\,d\mathrm{vol}_h.
\end{split}
\end{equation}

By the Cauchy-Schwarz inequality and combining  \eqref{eqn:II}, \eqref{eqn:I-leqII+III.}, \eqref{eqn:IV}, \eqref{eqn:V} and \eqref{eqn:VI+VII}, we conclude that
\begin{equation}\label{eqn:main-error}
\begin{split}
|\mathbf{I}+\mathbf{II}|&\leq (\mathbf{IV}_1+\mathbf{IV}_2)+(\mathbf{V}_1+\mathbf{V}_2)+\mathbf{VI}+\mathbf{VII}+|\mathbf{II}|
\\
&\leq C_7 \int_{B_h(x_0,2R)} |g-g_0|\left(u+u|\tilde\nabla g|^2+|\tilde\nabla u|+|\tilde\nabla u| |\tilde\nabla g| \right) \,d\mathrm{vol}_h\\
&\quad + C_7\int_{B_h(x_0,2R)} |\tilde \nabla (g-g_0)|\left( u|\tilde \nabla g|+ u|\tilde \nabla g_0|+|\tilde\nabla u| \right)\,d\mathrm{vol}_h \\
&\quad +\mathbf{IV}_2+\mathbf{V}_2.
\end{split}
\end{equation}

\begin{claim}\label{claim:error-dist}
There exists $C(n,h,\Lambda,f,\kappa,t_0)>0$ such that for all $t\in (0,t_0]$ and $R\to+\infty$, we have
$$\mathbf{IV}_2+\mathbf{V}_2\leq CR^{-1}.$$ 
In particular, 
\begin{equation}\label{eqn:main-error-2}
\begin{split}
|\mathbf{I}+\mathbf{II}|&\leq C_7 \int_{M} |g-g_0|\left(u+u|\tilde\nabla g|^2+|\tilde\nabla u|+|\tilde\nabla u| |\tilde\nabla g| \right) \,d\mathrm{vol}_h\\
&\quad + C_7\int_{M} |\tilde \nabla (g-g_0)|\left( u|\tilde \nabla g|+|\tilde\nabla u| \right)\,d\mathrm{vol}_h+CR^{-1}.
\end{split}
\end{equation}

\end{claim}
\begin{proof}[Proof of Claim~\ref{claim:error-dist}]

We start with $\mathbf{IV}_2$. Recall from \eqref{eqn:IV} that 
\begin{equation}
\begin{split}
\mathbf{IV}_2&=C_3R^{-1}  \int_{ \{x: R\leq \rho\leq 2R\} }   u \left(  |g-g_0| |\tilde\nabla g|+ |\tilde \nabla(g-g_0)|\right)\,d\mathrm{vol}_h.
\end{split}
\end{equation}

From the construction of $\rho$, we have 
\begin{equation}
 \{x: R\leq \rho\leq 2R\} \subseteq A_h(x_0,10^{-1}R,2R):=B_h(x_0,2R)\setminus \overline{B_h(x_0,10^{-1}R)},
\end{equation}
for $R\to+\infty$.  We take a maximal $1$-net $\{B_h(x_i,1)\}_{i=1}^N$ of the annulus so that 
\begin{equation}
\coprod_{i=1}^N B_h\left(x_i,\frac18\right)\subseteq A_h(x_0,10^{-1}R,2R) \subseteq \bigcup_{i=1}^N B_h(x_i,1)
\end{equation}
and hence $N\leq  \exp(C_{10}R)$, by Bishop-Gromov volume comparison. 
 Hence,
\begin{equation}\label{eqn:cutoff-I}
\begin{split}
 &\quad \int_{ \{x: R\leq \rho\leq 2R\} }   u|\tilde \nabla(g-g_0)|\,d\mathrm{vol}_h\\
 &\leq \sum_{i=1}^N \int_{B_h(x_i,1)} u|\tilde \nabla(g-g_0)|\,d\mathrm{vol}_h\\
 &\leq \sum_{i=1}^N \left(\int_{B_h(x_i,1)} |\tilde \nabla(g-g_0)|^n\,d\mathrm{vol}_h\right)^\frac{1}{n}\cdot \left(\int_{B_h(x_i,1)} u^\frac{n-1}{n} \,d\mathrm{vol}_h\right)^\frac{n-1}{n}\\
 &\leq 4\e_1^2\cdot  \sum_{i=1}^N |\mathrm{Vol}_h(x_i,1)|^\frac{1}{n}\cdot \left(\int_{B_h(x_i,1)} u^\frac{n-1}{n} \,d\mathrm{vol}_h\right)^\frac{n-1}{n}.
\end{split}
\end{equation}

We now estimate the integral over each covering ball. Since $d_h(x_i,x_0)\geq 10^{-1}R\to+\infty$, $\e\leq (\frac{4(n-1)}{n})^{-1}$ and $f$ is compactly supported, Proposition~\ref{prop:Lp-u} implies that for each $1\leq i\leq N$,
\begin{equation}\label{eqn:local-Lp-covering}
\fint_{B_h(x_i,1)}u^p d\mathrm{vol}_{g(t)}\leq C_{11} \cdot \exp\left( -\frac{R^2}{C_{11}}\right).
\end{equation}

Together with $N\leq \exp(C_{10}R)$, we substitute  \eqref{eqn:local-Lp-covering} into \eqref{eqn:cutoff-I} to conclude 
\begin{equation}\label{eqn:IV-2term}
\begin{split}
\int_{ \{x: R\leq \rho\leq 2R\} }   u|\tilde \nabla(g-g_0)|\,d\mathrm{vol}_h\leq C_{11} \exp\left( -\frac{R^2}{C_{11}}+C_{10}R\right)\leq C_{12}
\end{split}
\end{equation}
for all $R>1$.

For the first term in $\mathbf{IV}_2$ and $\mathbf{V}_2$, we first recall from \eqref{eqn:V} that 
\begin{equation}
\mathbf{V}_2=C_5 R^{-1}\int_{\{x:R\leq \rho\leq 2R \}} u|\tilde\nabla g| |g-g_0|\,d\mathrm{vol}_h,
\end{equation}
which is a multiple of the first term in $\mathbf{IV}_2$. We estimate as in \eqref{eqn:cutoff-I} that 
\begin{equation}
    \begin{split}
       &\quad  \int_{\{x:R\leq \rho\leq 2R \}} u|\tilde\nabla g| |g-g_0|\,d\mathrm{vol}_h\\
        &\leq \sum_{i=1}^N \left(\int_{B_h(x_i,1)} |\tilde \nabla g|^n|g-g_0|^n\,d\mathrm{vol}_h\right)^\frac{1}{n}\cdot \left(\int_{B_h(x_i,1)} u^\frac{n-1}{n} \,d\mathrm{vol}_h\right)^\frac{n-1}{n}\\
 &\leq C_{13}\e_1^2\cdot  \sum_{i=1}^N |\mathrm{Vol}_h(x_i,1)|^\frac{1}{n}\cdot \left(\int_{B_h(x_i,1)} u^\frac{n-1}{n} \,d\mathrm{vol}_h\right)^\frac{n-1}{n}.
    \end{split}
\end{equation}

This yields 
\begin{equation}\label{eqn:IV-1term}
\begin{split}
\int_{ \{x: R\leq \rho\leq 2R\} }   u|\tilde \nabla g|| g-g_0|\,d\mathrm{vol}_h\leq C_{14},
\end{split}
\end{equation}
which is analogous to \eqref{eqn:IV-2term}.

\end{proof}

We next consider those terms appearing in \eqref{eqn:main-error-2} in which $u$ is involved. We use $\hat C_i$ to denote constants which also depend on $R_0$.
\begin{claim}\label{claim:term-with-u}
We have 
\begin{equation*}
\begin{split}
\lim_{t\to0}\int_{M} u \big(&|g-g_0|+|g-g_0||\tilde \nabla g|^2\\
&+ |\tilde\nabla g||\tilde \nabla (g-g_0)|+ |\tilde\nabla g_0||\tilde \nabla (g-g_0)| \big)\,d\mathrm{vol}_h=0.
\end{split}
\end{equation*}
\end{claim}
\begin{proof}[Proof of Claim~\ref{claim:term-with-u}]

To begin, we first decompose the domain of integration into $B_h(x_0,2R_0)\cup \bigcup_{i=1}^{\infty} A_h(x_0,2R_i,2R_{i+1})$.
Following the proof of Claim~\ref{claim:error-dist}, we take a maximal $1$-net $\{B_h(x_k,1)\}_{k=1}^N$ of $A_h(x_0,2R_i,R_{i+1})$ so that $N\leq \exp(C_{10}R_i)$. The leading term is controlled as follows:
\begin{equation}
\begin{split}
&\quad \int_{B_h(x_0,2R_0)} u |g-g_0||\tilde\nabla g|^2 d\mathrm{vol}_h\\
&\leq \left(\int_{B_h(x_0,2R_0)}|\tilde\nabla g|^n d\mathrm{vol}_h\right)^\frac{2}{n}\left(\int_{B_h(x_0,2R_0)} u^\frac{n}{n-2}|g-g_0|^\frac{n}{n-2} d\mathrm{vol}_h\right)^\frac{n-2}{n}\\
&\leq \hat C_1 \left(\int_{B_h(x_0,2R_0)} u^\frac{2n}{n-2} d\mathrm{vol}_h\right)^\frac{n-2}{2n}\left(\int_{B_h(x_0,2R_0)} |g-g_0|^\frac{2n}{n-2} d\mathrm{vol}_h\right)^\frac{n-2}{2n}\\
&\leq \hat C_2 \left(\int_{B_h(x_0,2R_0)} |g-g_0|^\frac{2n}{n-2} d\mathrm{vol}_h\right)^\frac{n-2}{2n}=o(1)
\end{split}
\end{equation}
as $t\to 0$ for the fixed $R_0>0$, where we have used $\frac{2n}{n-2}<(4\e)^{-1}$ and Proposition~\ref{prop:Lp-u}. We now consider the annulus $A_h(x_0,2R_i,2R_{i+1})$, which is covered by $\{B_h(x_k,1)\}_{k=1}^N$. We argue similarly that for $1\leq k\leq N$, 
\begin{equation}
\begin{split}
&\quad \int_{B_h(x_k,1)} u |g-g_0||\tilde\nabla g|^2 d\mathrm{vol}_h\\
&\leq \e_1^2  \left(\int_{B_h(x_k,1)} u^\frac{2n}{n-2} d\mathrm{vol}_h\right)^\frac{n-2}{2n}\left(\int_{B_h(x_k,1)} |g-g_0|^\frac{2n}{n-2} d\mathrm{vol}_h\right)^\frac{n-2}{2n}\\
&\leq C_{13}\exp\left(-C_{13}^{-1} R_i^2 \right),
\end{split}
\end{equation}
where we have used Proposition~\ref{prop:Lp-u} as well as the fact that $d_h(x,x_0)\geq R_i$ for $x_k\in A_h(x_0,R_i,R_{i+1})$. Hence, 
\begin{equation}
\begin{split}
&\quad \int_{M\setminus B_h(x_0,2R_0)}u |g-g_0||\tilde\nabla g|^2 d\mathrm{vol}_h \\
&\leq \sum_{i=1}^\infty\int_{A_h(x_0,2R_i,2R_{i+1})} u |g-g_0||\tilde\nabla g|^2 d\mathrm{vol}_h\\
&\leq \sum_{i=1}^\infty C_{13}\exp\left(-C_{13}^{-1} R_i^2 +C_{10}R_i\right)\leq R_0^{-1}
\end{split}
\end{equation}
as $R_0\to+\infty$. The first term involving solely $u|g-g_0|$ can be handled similarly (and in a simpler fashion).  

For the last term, we similarly apply Young's inequality to deduce
\begin{equation}
\begin{split}
&\quad \int_{B_h(x_0,2R_0)} u|\tilde \nabla g||\tilde \nabla(g-g_0)|\,d\mathrm{vol}_h\\
&\leq  \left(\int_{B_h(x_0,2R_0)} |\tilde \nabla(g-g_0)|^n\,d\mathrm{vol}_h\right)^{\frac1n} \cdot \left(\int_{B_h(x_0,2R_0)} |\tilde\nabla g|^n d\mathrm{vol}_h\right)^\frac{1}{n}\\
&\quad \cdot \left(\int_{B_h(x_0,2R_0)}u^\frac{n}{n-2}d\mathrm{vol}_h \right)^\frac{n-2}{n}\\
&\leq \hat C_3\left(\int_{B_h(x_0,2R_0)} |\tilde \nabla(g-g_0)|^n\,d\mathrm{vol}_h\right)^{\frac1n}=o(1)
\end{split}
\end{equation}
as $t\to0$ for fixed $R_0>0$, 
by Proposition~\ref{prop:Lp-u} and $\frac{n}{n-2}<(4\e)^{-1}$. Similarly, using a covering argument, we obtain that 
\begin{equation}
\begin{split}
&\quad \int_{M\setminus B_h(x_0,2R)} u|\tilde \nabla g||\tilde \nabla(g-g_0)|\,d\mathrm{vol}_h\\
&\leq \sum_{i=1}^\infty \int_{A_h(x_0,2R_i,2R_{i+1})} u|\tilde \nabla g||\tilde \nabla(g-g_0)|\,d\mathrm{vol}_h\\
&\leq  \sum_{i=1}^\infty \sum_{k=1}^{N_i} \left(\int_{B_h(x_k,1)} |\tilde \nabla(g-g_0)|^n\,d\mathrm{vol}_h\right)^{\frac1n}\left(\int_{B_h(x_k,1)} |\tilde\nabla g|^n d\mathrm{vol}_h\right)^\frac{1}{n}\\
&\quad \cdot \left(\int_{B_h(x_k,1)}u^\frac{n}{n-2}d\mathrm{vol}_h \right)^\frac{n-2}{n}\\
&\leq (2\e_1^2)\sum_{i=1}^\infty \sum_{k=1}^{N_i} \left(\int_{B_h(x_k,1)}u^\frac{n}{n-2}d\mathrm{vol}_h \right)^\frac{n-2}{n}\leq R_0^{-1}
\end{split}
\end{equation}
as $R_0\to+\infty$. By letting $t\to0$ followed by $R_0\to+\infty$, we prove the claim.
\end{proof}

Now we handle the remaining terms involving $|\tilde\nabla u|$ in \eqref{eqn:main-error-2}.
\begin{claim}\label{claim:term-with-du}
We have 
\begin{equation}
\begin{split}
\lim_{t\to 0}\int_{M} |\tilde\nabla u| \left(|g-g_0|+|g-g_0||\tilde \nabla g|+ |\tilde \nabla (g-g_0)| \right)\,d\mathrm{vol}_h=0.
\end{split}
\end{equation}
\end{claim}
\begin{proof}

By Proposition~\ref{prop:Wp-u} and $\frac{n-2}{n+2}<\frac1{4\e}$,
\begin{equation}
\begin{split}
&\quad \int_{B_h(x_0,2R_0)} |\tilde\nabla u||g-g_0| |\tilde\nabla g|d\mathrm{vol}_h\\
&\leq \left( \int_{B_h(x_0,2R_0)} |\tilde\nabla u|^2  d\mathrm{vol}_h\right)^\frac12 \left(\int_{B_h(x_0,2R_0)}|\tilde\nabla g|^n\,d\mathrm{vol}_h \right)^\frac1n\\
&\quad \cdot \left( \int_{B_h(x_0,2R_0)}|g-g_0|^\frac{2n}{n-2}\,d\mathrm{vol}_h \right)^\frac{2n}{n-2}\\
&\leq \hat C_4 ||g-g_0||_{W^{1,n}(B_h(x_0,2R_0)}=o(1)
\end{split}
\end{equation}
as $t\to0$ for fixed $R_0>0$. We now handle the remaining set $M\setminus B_h(x_0,2R_0)=\bigcup_{i=1}^\infty A_h(x_0,2R_i,2R_{i+1})$ using an argument similar to the proof of Claim~\ref{claim:term-with-u}. To that end, we let $\{B_h(x_k,1)\}_{k=1}^{N_i}$ be a covering of $A_h(x_0,2R_i,2R_{i+1})$, where $N_i\leq \exp(C_{10}R_i)$. Then 
\begin{equation}
\begin{split}
&\quad \int_{M\setminus B_h(x_0,2R_0)} |\tilde\nabla u||g-g_0| |\tilde\nabla g|d\mathrm{vol}_h\\
&\leq \sum_{i=1}^\infty\sum_{k=1}^{N_i}\int_{B_h(x_k,1)}|\tilde\nabla u||g-g_0| |\tilde\nabla g|d\mathrm{vol}_h\\
&\leq \sum_{i=1}^\infty\sum_{k=1}^{N_i}\left( \int_{B_h(x_k,1)} |\tilde\nabla u|^2  d\mathrm{vol}_h\right)^\frac12 \left(\int_{B_h(x_k,1)}|\tilde\nabla g|^n\,d\mathrm{vol}_h \right)^\frac1n\\
&\quad \quad \quad  \quad  \quad  \cdot \left( \int_{B_h(x_k,1)}|g-g_0|^\frac{2n}{n-2}\,d\mathrm{vol}_h \right)^\frac{2n}{n-2}\\
&\leq C_{14}\sum_{i=1}^\infty\sum_{k=1}^{N_i} \exp(-C_{14}^{-1}R_i^2+C_{10}R_i)\leq R_0^{-1}
\end{split}
\end{equation}
as $R_0\to+\infty$, using Proposition~\ref{prop:Wp-u}.

The first term is similar, and simpler. For the last term, we argue similarly using Proposition~\ref{prop:Wp-u} and $n\geq 3$ that 
\begin{equation}
\begin{split}
&\quad \int_{B_h(x_0,2R_0)} |\tilde\nabla u| |\tilde\nabla (g-g_0)|d\mathrm{vol}_h\\
&\leq  \left(\int_{B_h(x_0,2R_0)} |\tilde\nabla u|^2 d\mathrm{vol}_h\right)^\frac12\left(\int_{B_h(x_0,2R_0)}  |\tilde\nabla (g-g_0)|^2 d\mathrm{vol}_h\right)^\frac12\\
&\leq \hat C_5 ||g-g_0||_{W^{1,n}(B_h(x_0,2R_0)}=o(1)
\end{split}
\end{equation}
as $t\to0$ for fixed $R_0>0$. In order to integrate over $M\setminus B_h(x_0,2R_0)$, we argue similarly using a covering argument to deduce that 
\begin{equation}
\begin{split}
&\quad \int_{M\setminus B_h(x_0,2R_0)} |\tilde\nabla u| |\tilde\nabla (g-g_0)|d\mathrm{vol}_h\leq R_0^{-1}
\end{split}
\end{equation}

This proves the claim by letting $t\to0$ followed by $R_0\to+\infty$.
\end{proof}

We now combine \eqref{eqn:scal-gt} and \eqref{eqn:scal-gt-1} with Claim~\ref{claim:error-dist}, Claim~\ref{claim:term-with-u} and Claim~\ref{claim:term-with-du} and let $m\to+\infty$ followed by $t\to0$ to show that 
\begin{equation}
\int_M (\scal_{g(t_0)}-\kappa)\cdot f d\mathrm{vol}_{g(t_0)}\geq 0
\end{equation}
for any compactly supported smooth non-negative function $f$. Since $g(t)$ is smooth and $t_0$ is arbitrary, this implies $\scal_{g(t)}(x)\geq \kappa$ on $M\times (0,T]$.
\end{proof}

 Theorem~\ref{thm:smoothing} is now a direct consequence of Theorem~\ref{thm:RDF-preserved} and Theorem~\ref{thm:RDF-exist}.
\begin{proof}[Proof of Theorem~\ref{thm:smoothing}]
By \cite[Proposition 3.1]{ChuLee2025}, there exists $r_0>0$ such that $\hat g_0:=r_0^{-2}g_0$ satisfies the assumptions of Theorem~\ref{thm:RDF-exist} with respect to $\hat h:=r_0^{-2}h$ and $\scal(\hat g_0)\geq \hat \kappa :=\kappa r_0^2$ on $M$ in the distributional sense. Let $\hat g(t)$, $t\in (0,\hat T]$ be the Ricci–DeTurck $\hat h$-flow obtained from applying Theorem~\ref{thm:RDF-exist} to $\hat g_0$. Theorem~\ref{thm:RDF-preserved} implies that $\scal(\hat g(t))\geq \hat\kappa$ on $(0,\hat T]$. The theorem then follows by scaling back. 
\end{proof}

\section{Rigidity under distributional scalar curvature lower bound}\label{sec:rigidty}

In this section, we discuss the application of Theorem~\ref{thm:RDF-preserved} to the rigidity problem in scalar curvature for metrics with regularity $L^\infty\cap W^{1,n}$ with respect to some smooth metric with bounded geometry. We start with the complete non-compact case, i.\,e., the positive mass theorem. The proof of the torus rigidity is similar and simpler. The main idea is to regularize the metric while preserving the scalar curvature geometry, which has already been taken care of in Theorem~\ref{thm:RDF-preserved}.

\begin{proof}[Proof of Theorem~\ref{thm:PMT}] 
The proof of the non-negativity of mass follows the strategy in the proof of \cite[Theorem 18]{McFeronSzekelyhidi2012}, except we use Theorem~\ref{thm:RDF-preserved} to preserve the non-negative scalar curvature. We include it for the reader's convenience.

We choose a background metric $h$ such that $h$ is Euclidean outside a compact set and hence $g_0$ is globally in $W^{1,n}$ with respect to $h$. As in the proof of Theorem~\ref{thm:smoothing}, we may assume by scaling that $g_0$ admits a unique Ricci–DeTurck $h$-flow $g(t)$ on $M\times (0,T]$ which satisfies 
\begin{enumerate}
\item[(i)] $\Lambda^{-1}h\leq g(t)\leq \Lambda h$,
\item[(ii)] $|\tilde\nabla^2 g(t)|+|\tilde\nabla g(t)|^2\leq \e t^{-1}$,
\item[(iii)] $\scal(g(t))\geq 0$,
\item[(iv)] $g(t)\to g_0$ in $L^p_{\loc}$ as $t\to0$ for all $p>0$,
\end{enumerate}
for some $\Lambda>1$. Here $\e > 0$ can be arbitrarily small, provided that we reduce the existence time further as needed. For the purpose of our application, it will be more transparent to keep track of the construction of $g(t)$ from \cite{ChuLee2025}. 

Let $g_{i,0}$ be a sequence of smooth metrics converging to $g_0$ in $W^{1,n}_{\loc}$. We may assume $g_{i,0}=g_0$ outside of a compact set, since $g_0$ is smooth at infinity.  By scaling, we may assume that $g_{i,0}$ admits a short-time solution $g_i(t)$, $t\in (0,T]$ to the Ricci–DeTurck $h$-flow. Since $g_0$ (and hence $g_{i,0}$) is asymptotically flat in the classical $C^2_\delta$-sense outside a compact set, by the local estimate of Ricci–DeTurck $h$-flow \cite[Proposition 2.2]{ChuLee2025}, $g_i(t)$ is also smooth outside a compact set up to $t=0$. It follows from \cite[Theorem 5]{McFeronSzekelyhidi2012} (see also \cite[Theorem 2.2]{Li2018}) that $g_i(t)$ is also asymptotically flat in the classical $C^2_\delta$-sense, uniformly in $i$. 

For each $i$, there is a diffeomorphism $\Phi_i(t)$ starting from $\Phi_i(0)=\mathrm{Id}$ such that $\Phi_i^*g_i(t)$ is a solution to Ricci flow so that \cite[Corollary 12]{McFeronSzekelyhidi2012} applies to yield $m_{\ADM}(g_{0})=m_{\ADM}(g_{i,0})= m_{\ADM}(\Phi_i^*g_i(t))$ for all $t\in [0,T]$ in view of $g_{i,0}=g_0$ outside a compact set. Since the mass is independent of the choice of asymptotic coordinates under our decay conditions by \cite[Theorem 4.2]{Bartnik}, we conclude that $m_{\ADM}(g_{0})= m_{\ADM}(g_i(t))$ for all $t\in [0,T]$. Furthermore, by \cite[Theorem 14 and Lemma 16, 17]{McFeronSzekelyhidi2012} and the construction of $g(t)=\lim_{i\to+\infty}g_i(t)$, we conclude that $g(t)$ is in $C^{1,\a}_{\delta'}$ for some $\delta'>\frac{n-2}2$ and $m_{\ADM}(g_0)\geq m_{\ADM}(g(t))$. Since $\scal(g(t))\geq 0$ by Theorem~\ref{thm:RDF-preserved}, it follows from \cites{SchoenYauPMT,SchoenNOTE,LeeParker} that 
\begin{equation}
    m_{\ADM}(g_0)\geq m_{\ADM}(g(t))\geq 0.
\end{equation}

It remains to prove the rigidity. If  $m_{\ADM}(g_0)=0$, then $m_{\ADM}(g(t))=0$ for $t\in (0,T]$. The smooth rigidity in the positive mass theorem \cites{SchoenYauPMT,SchoenNOTE,LeeParker} implies that $(M,g(t))$ is isometric to $\mathbb{R}^n$ for all $t\in (0,T]$. We consider the ordinary differential equation in \eqref{eqn:W-defn}:
\begin{equation}\label{eqn:OODE}
    \left\{
    \begin{array}{ll}
      \partial_t \Psi_t(x)=-W(\Psi_t(x),t),\\[3pt]
    \Psi_{T}(x)=x,
   \end{array}
    \right.
\end{equation}
so that $\Psi_t^*g(t)=g(T):=\hat g$ by flatness. Let $D$ be the connection of the pull-back bundle induced by the smooth map $\Psi_t:(\mathbb{R}^n,\hat g)\to (\mathbb{R}^n,g(t))$. By differentiating $\Psi_t^*g(t)=\hat g$, we have 
\begin{equation}
    (\Psi_t)_{ik}^\a (\Psi_t)^\b_j g_{\a\b}+ (\Psi_t)_{i}^\a (\Psi_t)^\b_{kj} g_{\a\b}=D_k\hat g_{ij}=0
\end{equation}
for all $i,j,k$. By interchanging $i,k$, we conclude that 
\begin{equation}
   (\Psi_t)_{k}^\a (\Psi_t)^\b_{ij} g_{\a\b}=- (\Psi_t)_{ik}^\a (\Psi_t)^\b_j g_{\a\b}=(\Psi_t)_{i}^\a (\Psi_t)^\b_{kj} g_{\a\b}.
\end{equation}

Since $i,j,k$ are arbitrary, we deduce that $D_iD_j \Psi_t^\a=0$. In local coordinates, we thus have 
$$ 0=D_iD_j \Psi_t^\a={\partial_i\partial_j\Psi_t^\a}-\Gamma_{ij}^k \cdot\partial_k \Psi_t^\a+\hat \Gamma_{\b\gamma}^\a \cdot \partial_i\Psi_t^\b\cdot  \partial_j\Psi_t^\gamma.$$
Note that $\Psi_t$ is locally bounded in the $W^{2,n}$-norm as $t\to0^+$. By the Rellich–Kondrachov theorem, $\Psi_t$ converges to some map $\Psi_0$ in $C^\a_{\loc}$ for any $\a\in (0,1)$. Since $\Psi_t$ is surjective, it follows from the convergence that $\Psi_0$ is also surjective. In order to show that $\Psi_0$ is injective, we observe that 
\begin{equation}
\begin{split}
d_{\hat g}(x,y)&=d_{g(t)}(\Psi_t(x),\Psi_t(y))\leq \Lambda^{1/2}d_h(\Psi_t(x),\Psi_t(y))
\end{split}
\end{equation}
for any $x,y\in M$. The injectivity of $\Psi_0$ then follows by letting $t\to0$. By the local regularity of Ricci–DeTurck flow \cite[Proposition 2.2]{ChuLee2025}, the convergence is in the locally smooth topology outside a compact set. This proves the flatness of $g_0$ outside of a compact set.

Finally, we prove the isometry in distance in the sense of Definition~\ref{defn:intr-dist}, that is,
\begin{equation}
d_{\hat g}(x,y)=d_{g_0}\left(\Psi_0(x),\Psi_0(y) \right)
\end{equation}
for all $x,y\in M$. This follows after suitably modifying the argument in \cite[Proposition 8.11]{LeeNaberNeumayer}, using also the fact that $g(t)$ is isometric to Euclidean space. Since we are in a simpler setting, we include the proof for the reader's convenience. We first note that from metric equivalence, it is clear that 
\begin{equation}
\Lambda^{-\frac{n+p}{2p}}\cdot d_{h,p}(x,y)\leq d_{g(t),p}(x,y)\leq \Lambda^\frac{n+p}{2p}\cdot d_{h,p}(x,y).
\end{equation}
In particular, we may assume $x,y\in B_{h}(x_0,R_0)$ for some $R_0(n,p,x_0,x,y,h)>0$, by \cite[Example 2.30]{LeeNaberNeumayer}.

For $p>n$, $\mu>1$ and $\e>0$, we let $f$ be a function such that $f(y)=0$, $||\nabla^{g_0} f||_{L^{p\mu}(M,g_0)}\leq 1$ and 
\begin{equation}
d_{g_0,p\mu}(x,y)\leq \e+ |f(x)-f(y)|.
\end{equation}
Such an $f$  exists in view of the definition of $d_{g_0,p\mu}$  given in \eqref{alternativddefn}
We replace $f$ by $\min\{ |f|,4R_0\}$.

We let $R>R_0$ and $\varphi_R$ be a smooth cutoff function such that $\varphi=1$ on $B_h(x_0,R)$, $\varphi$ vanishes outside $B_h(x_0,2R)$ and it satisfies $|\nabla ^h\varphi|\leq 10^4R^{-1}$. We replace the test function by $\hat f:= \varphi_R f$,
so that 
\begin{equation}\label{eqn:test-1}
\begin{split}
||\nabla^{g(t)} \hat f||_{L^p(M,g(t))}&\leq ||\varphi_R\nabla^{g(t)}  f||_{L^p(M,g(t))}+ ||f\nabla^{g(t)}  \varphi_R||_{L^p(M,g(t))}\\
&\leq ||\nabla^{g(t)}  f||_{L^p(B_h(x_0,2R),g(t))}+CR_0 R^{\frac{n}{p}-1}
\end{split}
\end{equation}
for some $C(n,\Lambda,h)>0$. Here we have used the fact that $h$ is of Euclidean volume growth and the metric equivalence between $g(t)$ and $h$.

We now establish control on the first term. Denote the conjugate exponent of $\mu$ by $\mu'$. Then for $t$ sufficiently small (depending also on $R$), we have $||g(t)-g_0||_{L^{\mu'}(B_h(x_0,2R))}<\e$ and hence 
\begin{equation}\label{eqn:test-2}
\begin{split}
\int_{B_h(x_0,2R)} |\nabla^{g(t)} f|^{p} d\mathrm{vol}_{g(t)}
&\leq \int_{B_h(x_0,2R)} (1+C_{n,p}|g-g_0|)|\nabla f|_{g_0}^{p} d\mathrm{vol}_{g_0}\\
&\leq \left|\mathrm{Vol}_{g_0}B_h(x_0,2R) \right|^{\frac1{\mu'}}+C_{n,p} \e,
\end{split}
\end{equation}
using $||\nabla f||_{L^{p\mu}(M,g_0)}\leq 1$. Combining \eqref{eqn:test-1} with \eqref{eqn:test-2} yields 
\begin{equation}
\begin{split}
||\nabla^{g(t)} \hat f||_{L^p(M,g(t))}\leq \left[  \left|\mathrm{Vol}_{g_0}B_h(x_0,2R) \right|^{\frac1{\mu'}}+C_{n,p} \e\right]^\frac1p+CR_0R^{\frac{n}{p}-1}
\end{split}
\end{equation}
and hence
\begin{equation}
\begin{split}
&\quad d_{g_0,p\mu}(x,y)-\e\\
&\leq d_{g(t),p}(x,y)\cdot\left( \left[  \left|\mathrm{Vol}_{g_0}B_h(x_0,2R) \right|^{\frac1{\mu'}}+C_{n,p} \e\right]^\frac1p+CR_0R^{\frac{n}{p}-1}\right)\\
&= d_{\hat g,p}(\Psi_t^{-1}(x),\Psi_t^{-1}(y))\cdot\left( \left[  \left|\mathrm{Vol}_{g_0}B_h(x_0,2R) \right|^{\frac1{\mu'}}+C_{n,p} \e\right]^\frac1p+CR_0R^{\frac{n}{p}-1}\right).
\end{split}
\end{equation}

Since $d_p$ is a continuous function on $M\times M$ by \cite[(2.13)]{DeCeccoPalmieri} (see also the discussion in \cite[Section 2]{LeeNaberNeumayer}) and $\Psi_0$ is a Hölder homeomorphism, we may let $t\to 0$ and $\e\to 0$ to conclude 
\begin{equation}
    d_{g_0,p\mu}(x,y)\leq  d_{\hat g,p}(\Psi_0^{-1}(x),\Psi_0^{-1}(y))\cdot\left(  \left|\mathrm{Vol}_{g_0}B_h(x_0,2R) \right|^{\frac1{p\mu'}}+CR_0R^{\frac{n}{p}-1}\right).
\end{equation}

Furthermore, in view of $\lim_{p\to+\infty}d_p=d_g$ by \cite[Theorem 2.6]{DeCeccoPalmieri}, we may then let $p\to+\infty$ and $R\to +\infty$ to conclude 
$$d_{g_0}(x,y)\leq d_{\hat g}(\Psi_0^{-1}(x),\Psi_0^{-1}(y))$$
for all $x,y\in M$. The reverse inequality can be proved by interchanging the roles of $g_0$ and $g(t)$. This shows that $\Psi_0$ is an isometry with respect to the distance in the sense of Definition~\ref{defn:intr-dist}. This completes the proof.
\end{proof}

The compact case is a straightforward adaption of the proof of rigidity in the positive mass theorem, i.\,e., Theorem~\ref{thm:PMT}.
\begin{proof}[Proof of Theorem~\ref{thm:compact}]
By the proof of Theorem~\ref{thm:smoothing}, we may choose $h$ such that $g_0$ admits a short-time solution $g(t)$ to the Ricci–DeTurck flow on $\mathbb{T}^n\times (0,T]$ which satisfies 
\begin{enumerate}
\item[(i)] $\Lambda^{-1}h\leq g(t)\leq \Lambda h$,
\item[(ii)] $|\tilde\nabla^2 g(t)|+|\tilde\nabla g(t)|^2\leq \e t^{-1}$,
\item[(iii)] $\scal(g(t))\geq 0$,
\item[(iv)] $g(t)\to g_0$ in $L^p$ as $t\to0$ for all $p>0$,
\end{enumerate}
for some $\Lambda>1$.
Furthermore, $\e>0$ can be made arbitrarily small by reducing $T$ or re-choosing $h$ as required. By the classical torus rigidity \cites{SchoenYauTorus,SchoenYauTorusII,
GromovLawson1980}, $\mathrm{Rm}(g(t))\equiv 0$ for all $t\in (0,T]$. The rest of proof is identical to that of Theorem~\ref{thm:PMT}.
\end{proof}


\end{document}